\begin{document}

\theoremstyle{plain}
\newtheorem{Theorem}{Theorem}
\newtheorem{Lemma}[Theorem]{Lemma}
\newtheorem{Proposition}[Theorem]{Proposition}
\newtheorem{Corollary}[Theorem]{Corollary}
\newtheorem{Condition}[Theorem]{Condition}
\newtheorem{Algorithm}[Theorem]{Algorithm}

\theoremstyle{definition}
\newtheorem{Definition}[Theorem]{Definition}

\theoremstyle{remark}
\newtheorem*{Remark}{Remark}
\newtheorem{Example}[Theorem]{Example}

\newcommand{\R}{\mathbb{R}}

\newcommand{\todo}[1]{\vspace{5 mm}\par \noindent
\marginpar{\textsc{ToDo}}
\framebox{\begin{minipage}[c]{0.95 \textwidth}
\tt #1 \end{minipage}}\vspace{5 mm}\par}

\title{Temporal homogenization of linear ODEs, with applications
to parametric super-resonance and energy harvest}

\author{Molei Tao and Houman Owhadi}

\maketitle

\begin{abstract}
\noindent
We consider the temporal homogenization of linear ODEs of the form $\dot{x}=Ax+\epsilon P(t)x+f(t)$, where $P(t)$ is periodic and $\epsilon$ is small. 
Using a 2-scale expansion approach, we obtain the long-time approximation $x(t)\approx \exp(At) \left( \Omega(t)+\int_0^t \exp(-A \tau) f(\tau) \, d\tau \right)$, where $\Omega$ solves the cell problem $\dot{\Omega}=\epsilon B \Omega + \epsilon F(t)$ with an effective matrix $B$ and an explicitly-known $F(t)$. We provide necessary and sufficient condition for the accuracy of the  approximation (over a $\mathcal{O}(\epsilon^{-1})$ time-scale), and show how $B$ can be computed (at a cost independent of $\epsilon$). As a direct application, we investigate the possibility of using RLC circuits to harvest the energy contained in small scale oscillations of  ambient electromagnetic fields (such as Schumann resonances). Although a RLC circuit parametrically coupled to the field may achieve such energy extraction via parametric resonance, its resistance $R$ needs to be smaller than a threshold $\kappa$ proportional to the fluctuations of the field, thereby limiting practical applications. We show that if $n$ RLC circuits are appropriately coupled via mutual capacitances or inductances, then energy extraction can be achieved when the resistance of each circuit is smaller than $n\kappa$. Hence, if the resistance of each circuit has a non-zero fixed value, energy extraction can be made possible through the coupling of a sufficiently large number $n$ of circuits ($n\approx 1000$ for the first mode of Schumann resonances and contemporary values of capacitances, inductances and resistances).
The theory is also applied to the
 control of the oscillation amplitude of a (damped) oscillator.
\end{abstract}

\section{Introduction}
\subsection{Main mathematical results}
Consider time-dependent non-homogeneous linear ODE
\begin{equation}
    \dot{x}=Ax+\epsilon P(t)x+f(t)
    \label{eq_tempHomoSystem}
\end{equation}
on $\R^n$, where $A$ is a constant $n\times n$ real matrix, $P(t)$ is a square-integrable $2\pi/\omega$-periodic function taking real matrix values, $f(t)$ is a vector-valued function satisfying that $\exp(-At)f(t)$ is integrable on $[0,\hat{C}\epsilon^{-1}]$ for some $\hat{C}>0$, and $0<\epsilon \ll 1$.

Our main purpose is to approximate the solution of \eqref{eq_tempHomoSystem} over a $\mathcal{O}(\epsilon^{-1})$ timescale, without resolving oscillations of $P(t)$ over that (long) interval of time. Our first result is as follows:

\begin{Theorem}
    \label{thm_temporalHomogenization}
    Let $x(t)$ be the solution of the non-autonomous ODE system \eqref{eq_tempHomoSystem}. If $\exp(-A t) P(t) \exp(A t)$ is uniformly bounded in $t$, then there exists a constant matrix $B$, independent of $f(\cdot)$, such that
    \begin{equation}
        x(t) = \exp(At) \left( \Omega(t)+\int_0^t \exp(-A \tau) f(\tau) \, d\tau + E(t,\epsilon) \right), \label{eq_tempHomosedResultbis}
    \end{equation}
    with
    \begin{align}
        \dot{\Omega} &=\epsilon B \Omega + \epsilon F(t) \nonumber\\
        F(t) &:= \exp(-At)P(t)\exp(At)\int_0^t \exp(-A\tau)f(\tau)\,d\tau ,
        \label{eq_MainCellProblem}
    \end{align}
    where $\Omega(0)=x(0)$ and, noting $\|y\|:=\sqrt{y_1^2+\cdots+ y_n^2}$ the Euclidean 2-norm of $y$,  the error ($E(t,\epsilon)$ in \eqref{eq_tempHomosedResultbis}) satisfies, for $0\leq t\leq C \epsilon^{-1}$,
    \begin{equation}
         \| E(t, \epsilon) \| \leq C \epsilon \exp(\epsilon^2 C t) \left( \max_{\tau\in [0,t]} \|\Omega(\tau)\| + \max_{\tau\in [0,t]} \left\| \int_0^\tau \exp(-A s)f(s)\,ds \right\| \right),
         \label{eq_errorBound}
    \end{equation}
    for some constant $C$ independent of $t$ and $\epsilon$. Moreover, $B$ can be identified by either
    \begin{equation}
        B=\mathcal{G}[\exp(-At)P(t)\exp(At)],
        \label{eq_algebraic_representation_intro}
    \end{equation}
    where $\mathcal{G}$ is defined in Definition \ref{def_nonOsc1}, or
    \begin{equation}
        B=\lim_{T \rightarrow \infty} \frac{1}{T}\int_0^T \exp(-A\tau)P(\tau)\exp(A\tau)\,d\tau ,
        \label{eq:oedhohiud}
    \end{equation}
    where the limit exists if and only if $e^{-At}P(t)e^{At}$ is uniformly bounded in $t$.
\end{Theorem}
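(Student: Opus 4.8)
The plan is to pass to the interaction picture and then carry out a second-order averaging argument. First I would set $y(t):=\exp(-At)x(t)$; since $A$ commutes with $\exp(-At)$, equation \eqref{eq_tempHomoSystem} becomes $\dot y=\epsilon Q(t)\,y+g(t)$, where $Q(t):=\exp(-At)P(t)\exp(At)$ is exactly the function assumed uniformly bounded and $g(t):=\exp(-At)f(t)$ is integrable on $[0,\hat C\epsilon^{-1}]$ by hypothesis. Introducing $h(t):=\int_0^t g(\tau)\,d\tau$ and $z:=y-h$ turns this into the inhomogeneous linear system $\dot z=\epsilon Q(t)\,z+\epsilon F(t)$ with $F(t)=Q(t)h(t)$, which is precisely the right-hand side appearing in the cell problem \eqref{eq_MainCellProblem}, together with $z(0)=x(0)=\Omega(0)$. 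Since $x=\exp(At)(z+h)$, the error $E$ in \eqref{eq_tempHomosedResultbis} is simply $z-\Omega$, so the theorem reduces to two points: that $Q$ may be replaced in the equation by a constant matrix $B$ (its mean), and that the resulting $z-\Omega$ is $\mathcal{O}(\epsilon)$ with the stated amplification.

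The technical heart — and the step I expect to be the main obstacle — is understanding the structure of $Q$. Expanding $P$ in its Fourier series $\sum_k P_k e^{\mathrm ik\omega t}$ and $\exp(\pm At)$ through the eigenvalues and Jordan structure of $A$, uniform boundedness of $Q$ forces every component carrying a nonzero exponential rate or a genuine polynomial (Jordan) factor to cancel, so that $Q$ is almost periodic, $Q(t)=\sum_j c_j e^{\mathrm i\nu_j t}$ with real frequencies $\nu_j$ — countably many, as $P$ is only square-integrable periodic. Then $B:=\mathcal{G}[Q]$ is the sum over the indices with $\nu_j=0$, which is \eqref{eq_algebraic_representation_intro}, and a Cauchy--Schwarz estimate (using $\sum_k\|P_k\|^2<\infty$ together with $|\nu_j|\geq c|k|$ for some $c>0$ and all large $|k|$, so that $\sum_{\nu_j\neq0}\|c_j\|/|\nu_j|<\infty$) shows that the corrector $U(t):=\int_0^t\!\big(Q(\tau)-B\big)\,d\tau=\sum_{\nu_j\neq0}\frac{c_j}{\mathrm i\nu_j}\big(e^{\mathrm i\nu_j t}-1\big)$ is \emph{uniformly bounded}; consequently $\frac1T\int_0^T Q=B+\frac1T\big(U(T)-U(0)\big)\to B$, which is \eqref{eq:oedhohiud}. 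Conversely, if $Q$ is unbounded its fastest-growing component already makes $\frac1T\int_0^T Q$ diverge, giving the ``only if''. Essentially all of this is available from the earlier development of the paper (Definition~\ref{def_nonOsc1} and the surrounding lemmas), so in the write-up it reduces to citations; the remaining steps are soft.

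With the bounded corrector $U$ in hand I would perform the near-identity change of variables $z=(I+\epsilon U(t))\zeta$, which is invertible for small $\epsilon$ because $U$ is bounded, and which keeps $\zeta(0)=x(0)$ since $U(0)=0$. Using $\dot U=Q-B$ and expanding $(I+\epsilon U)^{-1}$, a short computation gives $\dot\zeta=\epsilon B\zeta+\epsilon F+\epsilon^2 G(t)$ with $\|G(t)\|\le C\big(\|\zeta(t)\|+\|h(t)\|\big)$: the oscillatory term $\epsilon Q$ has been traded for the constant $\epsilon B$ plus a genuinely $\mathcal{O}(\epsilon^2)$ remainder. Subtracting the cell problem $\dot\Omega=\epsilon B\Omega+\epsilon F$, the difference $\psi:=\zeta-\Omega$ solves $\dot\psi=\epsilon B\psi+\epsilon^2 G$ with $\psi(0)=0$, and Grönwall's inequality applied to the integral form yields $\|\psi(t)\|\le\epsilon^2 C t\,e^{(\epsilon\|B\|+\epsilon^2 C)t}\big(\max_{\tau\in[0,t]}\|\Omega(\tau)\|+\max_{\tau\in[0,t]}\|h(\tau)\|\big)$. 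On $0\le t\le C\epsilon^{-1}$ the factor $e^{\epsilon\|B\|t}$ is bounded by a constant, irrespective of the sign of the spectral abscissa of $B$ (this is where the $\mathcal{O}(\epsilon^{-1})$ horizon is used), and $\epsilon^2 t\le C$, so $\|\psi(t)\|\le C\epsilon\,e^{\epsilon^2 C t}\big(\max_{\tau\in[0,t]}\|\Omega(\tau)\|+\max_{\tau\in[0,t]}\|h(\tau)\|\big)$. The crucial point is that, because the transformation pushed the nontrivially-oscillating part to order $\epsilon^2$, the Grönwall constant is $\mathcal{O}(\epsilon^2)$ rather than $\mathcal{O}(\epsilon)$, which is exactly what produces the factor $\exp(\epsilon^2 C t)$ in \eqref{eq_errorBound} instead of $\exp(\epsilon C t)$.

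Finally I would undo the substitution: $E=z-\Omega=\psi+\epsilon U(\psi+\Omega)$, and the extra term is of the same form as the bound just obtained, contributing at most $\epsilon\|U\|_\infty\max_{\tau\in[0,t]}\|\Omega(\tau)\|$; together with $\max_{\tau\in[0,t]}\|h(\tau)\|=\max_{\tau\in[0,t]}\left\|\int_0^\tau\exp(-As)f(s)\,ds\right\|$ this gives \eqref{eq_errorBound}. That $B=\mathcal{G}[Q]$ depends only on $A$ and $P$, hence is independent of $f$, is immediate from its construction. Apart from the structural step of the second paragraph, the only thing requiring care is the bookkeeping of constants so that they are genuinely uniform in $t$ and $\epsilon$; the Grönwall and variation-of-parameters estimates themselves are routine.
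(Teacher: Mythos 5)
Your proposal is correct, and its backbone coincides with the paper's: pass to the slow variable $z(t)=\exp(-At)x(t)-\int_0^t\exp(-A\tau)f(\tau)\,d\tau$ (the paper's $\Xi$), subtract the cell problem, and exploit the fact that the oscillatory remainder $Q(t)-B$ has a \emph{uniformly bounded antiderivative} (your $U$, the paper's $\mathcal{R}$ from Lemma \ref{lem_remainderOperator}) to upgrade the naive $\mathcal{O}(\epsilon)$ Gronwall rate to $\mathcal{O}(\epsilon^{2})$, which is exactly what yields the $\exp(\epsilon^{2}Ct)$ factor in \eqref{eq_errorBound}. Where you differ is in the mechanics of that upgrade: you perform a near-identity (second-order averaging) change of variables $z=(I+\epsilon U)\zeta$ first, reducing to $\dot\zeta=\epsilon B\zeta+\epsilon F+\epsilon^{2}G$ and then a plain Gronwall estimate, whereas the paper keeps the original variable, integrates by parts inside the Duhamel representation using $\mathcal{R}$, and then solves implicitly for the error through the factor $(1-\epsilon\mathcal{R}(t))^{-1}$ before applying Gronwall. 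The two are essentially equivalent in content; your version is arguably cleaner to present (the remainder is visibly $\mathcal{O}(\epsilon^{2})$ before any estimate), while the paper's avoids the change of variables and the expansion of $(I+\epsilon U)^{-1}$. Your identification of $B$ and of the averaging limit \eqref{eq:oedhohiud} correctly defers to the algebraic machinery (Definition \ref{def_nonOsc1}, Propositions \ref{thm_averaging} and \ref{prop:alebraicB}), as the paper does.

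One small inaccuracy to fix in the write-up: uniform boundedness of $Q(t)=\exp(-At)P(t)\exp(At)$ does \emph{not} force $Q$ to be purely almost periodic. Components with negative exponential rate (arising when $A$ has eigenvalues with distinct real parts) and bounded $t^{k}e^{at}$ factors with $a<0$ may survive; only the growing exponentials and the genuine polynomial growth must cancel. This is harmless for your argument—such terms have vanishing long-time average, so $B$ is unchanged, and their antiderivatives are bounded, so $U$ remains uniformly bounded (this is precisely what Lemma \ref{lem_remainderOperator} establishes, covering both the oscillatory and the decaying parts)—but the representation $Q(t)=\sum_j c_j e^{\mathrm{i}\nu_j t}$ and the Cauchy--Schwarz summability argument should be stated for the oscillatory part of $Q$ only, with the decaying part handled separately.
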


Theorem \ref{thm_temporalHomogenization} shows that if $\exp(-A t) P(t) \exp(A t)$ remains uniformly bounded, then up to time $\mathcal{O}(\epsilon^{-1})$, the solution of \eqref{eq_tempHomoSystem} can be approximated by
\begin{equation}
    x(t) \approx \exp(At) \left( \exp(\epsilon B t)x(0) + \int_0^t \exp(\epsilon B (t-\tau)) \epsilon F(\tau) \,d\tau + \int_0^t \exp(-A \tau) f(\tau) \, d\tau \right).
    \label{eq_tempHomoResult}
\end{equation}

The analytical expression in the right side of \eqref{eq_tempHomoResult} can be explicitly computed for a large class of $f$'s (e.g., $f(t)=p(t,\cos t,\sin t)$ for polynomial $p$).  $B$ acts as an effective matrix characterizing the time-homogenized action of fast periodic oscillations. We provide two methods for the identification of $B$: the first one \eqref{eq_algebraic_representation_intro} is  algebraic and described in Proposition \ref{prop:alebraicB}; the second one \eqref{eq:oedhohiud} is computational and described in Proposition \ref{thm_averaging}.
\medskip

Uniform boundedness of $\exp(-A t) P(t) \exp(A t)$ is not only sufficient for the accuracy of the approximation, but also necessary as shown by the following theorem.

\begin{Theorem}
    Consider system \eqref{eq_tempHomoSystem}. Given a constant matrix $B$, define the approximation error
    \[
        E(t,\epsilon):=\exp(-At)x(t)-\Omega(t)-\int_0^t \exp(-A\tau)f(\tau)\,d\tau,
    \]
    where $\Omega$ satisfies \eqref{eq_MainCellProblem}. If $\exp(-A t) P(t) \exp(A t)$ is not uniformly bounded in time, then for any constant matrix $B$ independent of $f(\cdot)$, there exists at least one initial condition $x_0$ and a constant $\bar{C}$ (independent of $\epsilon$), such that there is no constant $C$ (independent of $\epsilon$) that satisfies
    \[
        \|E(t,\epsilon)\| \leq C \epsilon \left( \max_{\tau\in [0,t]} \|\Omega(\tau)\| + \max_{\tau\in [0,t]} \left\| \int_0^\tau \exp(-A s)f(s)\,ds \right\| \right)
    \]
    for $t\leq \bar{C} \epsilon^{-1}$.
    \label{thm_tempHomoCannot}
\end{Theorem}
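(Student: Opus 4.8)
\emph{Plan, step 1 (reductions).} Since the statement only asks for \emph{one} bad initial datum, I would work with $f\equiv 0$: then $F\equiv 0$, the integral $\int_0^t e^{-A\tau}f(\tau)\,d\tau$ vanishes, $\Omega(t)=e^{\epsilon Bt}x(0)$, and $E(t,\epsilon)=e^{-At}x(t)-e^{\epsilon Bt}x_0$ with $x_0:=x(0)$. As in the proof of Theorem~\ref{thm_temporalHomogenization}, set $y(t):=e^{-At}x(t)$, so $\dot y=\epsilon Q(t)y$, $y(0)=x_0$, with $Q(t):=e^{-At}P(t)e^{At}$; by linearity $y(t)=\Psi_\epsilon(t)x_0$ where $\Psi_\epsilon$ is the principal fundamental matrix of $\dot\Psi=\epsilon Q\Psi$, and $E(t,\epsilon)=\bigl(\Psi_\epsilon(t)-e^{\epsilon Bt}\bigr)x_0$. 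Because $\max_{\tau\in[0,t]}\|\Omega(\tau)\|\le e^{\|B\|\bar C}\|x_0\|$ on $[0,\bar C\epsilon^{-1}]$, it suffices to produce $\bar C>0$ and a unit vector $x_0$ with $\limsup_{\epsilon\to 0^+}\sup_{0\le t\le\bar C\epsilon^{-1}}\epsilon^{-1}\|(\Psi_\epsilon(t)-e^{\epsilon Bt})x_0\|=+\infty$. For orientation: with $R(t):=\int_0^t(Q(\tau)-B)\,d\tau$, variation of constants plus one integration by parts gives $E(t,\epsilon)=\epsilon R(t)\Omega(t)+\epsilon^2\!\int_0^t\Psi_\epsilon(t,\tau)(Q(\tau)R(\tau)-R(\tau)B)\Omega(\tau)\,d\tau$, and by \eqref{eq:oedhohiud}, if $Q$ is not uniformly bounded then $\sup_t\|R(t)\|=\infty$ for \emph{every} constant $B$ (else $\tfrac1T R(T)\to 0$ would make the Cesàro limit exist); so the leading term of the error is ``morally'' unbounded relative to $\epsilon\|\Omega\|$, and the work is to make this rigorous.

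\emph{Step 2 (reduce to non-convergence of $\Psi_\epsilon(\bar C\epsilon^{-1})$).} I would show it is enough to find $\bar C>0$ such that $\limsup_{\epsilon\to 0^+}\|\Psi_\epsilon(\bar C\epsilon^{-1})-L\|_{\mathrm{op}}>0$ (value $+\infty$ allowed) for every constant matrix $L$; one then applies this with $L=e^{\epsilon B\,\bar C\epsilon^{-1}}=e^{\bar C B}$, which handles all $B$ simultaneously. Two cases. If $\sup_{0<\epsilon\ll1}\|\Psi_\epsilon(\bar C\epsilon^{-1})\|_{\mathrm{op}}=\infty$ (necessarily witnessed along some $\epsilon_n\to0$, by continuity in $\epsilon$ away from $0$), the uniform boundedness principle yields a unit $x_0$ with $\sup_n\|\Psi_{\epsilon_n}(\bar C\epsilon_n^{-1})x_0\|=\infty$, whence $\|E(\bar C\epsilon_n^{-1},\epsilon_n)\|\ge\|\Psi_{\epsilon_n}(\bar C\epsilon_n^{-1})x_0\|-e^{\|B\|\bar C}\|x_0\|$ is unbounded along a subsequence. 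Otherwise $M:=\sup_\epsilon\|\Psi_\epsilon(\bar C\epsilon^{-1})\|_{\mathrm{op}}<\infty$; pick $\epsilon_n\to0$ and unit $v_n$ with $\|(\Psi_{\epsilon_n}(\bar C\epsilon_n^{-1})-e^{\bar C B})v_n\|\ge\delta>0$, pass to $v_n\to x_0$ by compactness of the unit sphere, and use $\|\Psi_{\epsilon_n}(\bar C\epsilon_n^{-1})-e^{\bar C B}\|_{\mathrm{op}}\le M+e^{\|B\|\bar C}$ to obtain $\|E(\bar C\epsilon_n^{-1},\epsilon_n)\|=\|(\Psi_{\epsilon_n}(\bar C\epsilon_n^{-1})-e^{\bar C B})x_0\|\ge\delta-(M+e^{\|B\|\bar C})\|v_n-x_0\|\to\delta$. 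In both cases $\limsup_{\epsilon\to0}\|E(\bar C\epsilon^{-1},\epsilon)\|>0$ while $\epsilon\max_{\tau\le\bar C\epsilon^{-1}}\|\Omega(\tau)\|\le\epsilon e^{\|B\|\bar C}\|x_0\|\to0$, so no $\epsilon$-independent $C$ can satisfy the asserted bound.

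\emph{Step 3 (non-convergence from unboundedness of $Q$).} Take $\bar C=1$ and rescale: $\tilde\Psi_\epsilon(s):=\Psi_\epsilon(s\epsilon^{-1})$ solves $\tilde\Psi_\epsilon'=Q(s\epsilon^{-1})\tilde\Psi_\epsilon$, $\tilde\Psi_\epsilon(0)=I$, and I must show $\tilde\Psi_\epsilon(1)$ has no limit as $\epsilon\to0^+$. In a (real) Jordan basis of $A$, with $P(t)=\sum_k P^{(k)}e^{ik\omega t}$, the block of $Q(\tau)$ from the $\lambda_j$- to the $\lambda_i$-generalized eigenspace is a finite sum of terms $c\,\tau^{\ell}e^{\mu\tau}$ with $\mu=\lambda_j-\lambda_i+ik\omega$, and ``$Q$ not uniformly bounded'' forces some term with $c\neq0$ to have $\operatorname{Re}\mu>0$, or $\operatorname{Re}\mu=0$ with $\ell\ge1$. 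I would fix the dominant such mode (exponent $\mu_0$, degree $\ell_0$) and choose $x_0$ in the $\lambda_j$-generalized eigenspace so as to excite it; inserting the Peano--Baker series for $\tilde\Psi_\epsilon(1)x_0$ and resumming it layer by layer (tracking that the growth rates of all blocks of $Q$ are fixed by real-part differences of eigenvalues of $A$, that the nilpotent parts of $A$ cap the polynomial degrees, and that the feedback of the driven components re-enters only at higher order in $\epsilon$) should give $\tilde\Psi_\epsilon(1)x_0$ equal to $x_0$, plus a \emph{finite} sum of terms of the form ``amplitude $\times$ fast phase $e^{im(\operatorname{Im}\mu_0)/\epsilon}$'', plus $O(\epsilon)$. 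If $\operatorname{Re}\mu_0>0$, or $\operatorname{Re}\mu_0=0$ with $\ell_0\ge2$, or $\operatorname{Re}\mu_0=\operatorname{Im}\mu_0=0$ with $\ell_0\ge1$, these amplitudes diverge and $\tilde\Psi_\epsilon(1)$ is unbounded; in the remaining case $\operatorname{Re}\mu_0=0$, $\vartheta:=\operatorname{Im}\mu_0\neq0$, $\ell_0=1$, the amplitudes are $O(1)$ and the phases run over the distinct values $m\vartheta/\epsilon$, so along any $\epsilon_n\to0$ with $\vartheta/\epsilon_n\to\varphi\pmod{2\pi}$ the limit exists and depends nontrivially on $\varphi$ (linearly independent oscillations), hence $\tilde\Psi_\epsilon(1)x_0$ clusters on a nondegenerate set and cannot converge. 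Either way $\tilde\Psi_\epsilon(1)$ has no limit, which is exactly what Step 2 requires.

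\emph{Expected main obstacle.} The hard part will be the resummation in Step 3: showing that the dominant mode of $Q$ is neither cancelled nor dominated once the \emph{entire} Peano--Baker series — not just its first term $\epsilon\int_0^{1/\epsilon}Q$ — is retained, and, in the borderline case $\ell_0=1$, $\operatorname{Re}\mu_0=0$, that $x_0$ can be chosen so the resulting cluster set is genuinely more than a point (e.g.\ so that the iterated actions of the mode's coefficient matrix on $x_0$ do not all vanish). The point making this feasible is that every block of $Q$ is an exponential polynomial with exponents fixed by the spectrum of $A$ and degrees capped by the Jordan block sizes, so the series reorganizes into finitely many ``$\epsilon$-slowly-varying amplitude $\times$ $\epsilon$-fast phase'' pieces plus an $O(\epsilon)$ tail; carrying out this bookkeeping — organized via the real parts of the eigenvalues of $A$, which put $Q$ in a Duhamel cascade from the fastest-growing blocks downward — is lengthy but uses nothing beyond Duhamel's formula and the Jordan normal form.
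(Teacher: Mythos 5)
Your Steps 1 and 2 are sound reductions: with $f\equiv 0$ one has $\Omega(t)=e^{\epsilon Bt}x_0$ and $E=(\Psi_\epsilon(t)-e^{\epsilon Bt})x_0$, and it indeed suffices to produce $\bar C$ and one unit vector $x_0$ for which $\|(\Psi_\epsilon(\bar C\epsilon^{-1})-e^{\bar CB})x_0\|$ does not tend to $0$; the bounded/unbounded dichotomy via Banach--Steinhaus and compactness of the unit sphere is unobjectionable in finite dimensions. The genuine gap is Step 3, which you yourself flag as the ``expected main obstacle'': you never prove that the unboundedness of $Q(t)=e^{-At}P(t)e^{At}$ actually survives in $\tilde\Psi_\epsilon(1)$, i.e.\ that the dominant growing mode of $Q$ is neither cancelled nor swamped once the entire Peano--Baker series is resummed (``should give'' is doing all the work). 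This is not routine bookkeeping: different orders of the series can carry the same exponential size in a given entry --- e.g.\ if $P_{il}=0$ but $P_{ij},P_{jl}\neq 0$ with $\mathrm{Re}\,\lambda_l>\mathrm{Re}\,\lambda_j>\mathrm{Re}\,\lambda_i$, the second-order iterated integral in the $(i,l)$ block grows at the rate $e^{(\mathrm{Re}\,\lambda_l-\mathrm{Re}\,\lambda_i)t}$ of a first-order term --- so the claim that the feedback re-enters ``only at higher order in $\epsilon$'' with the same exponential factor needs a real argument, as does the non-degeneracy of the cluster set in your borderline case $\mathrm{Re}\,\mu_0=0$, $\ell_0=1$. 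As written, the proposal establishes the theorem only modulo this resummation, which is the entire analytic content of the statement.

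The paper sidesteps exactly this difficulty by arguing by contradiction: assuming $\|E(t,\epsilon)\|\le C\epsilon$ up to $t=\bar C\epsilon^{-1}$ (with $f\equiv 0$, so $\Omega$ is bounded), it writes the Duhamel identity \eqref{eqn_errorIntegralForm} and uses the hypothesis itself to bound the feedback term $\epsilon\int_0^t e^{\epsilon B(t-\tau)}GE\,d\tau$ by $C\epsilon^2\int_0^{1/\epsilon}e^{a\tau}\tau^k\,d\tau$, so that only the explicit first-order quantity $\epsilon\int_0^t e^{\epsilon B(t-\tau)}G(\tau)e^{\epsilon B\tau}x_0\,d\tau$ must be shown large; Lemma \ref{lem_asymIntegral} together with a choice of $x_0$ adapted to the Jordan structure of $B$ (not of $A$) excludes cancellation there. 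Your own ``orientation'' identity at the end of Step 1 is essentially this bootstrap's starting point; if you replace the non-convergence-of-$\Psi_\epsilon$ route of Steps 2--3 by that contradiction argument, the unresolved resummation disappears and the proof closes.
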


Section \ref{Section_theory} establishes these results. Sections \ref{Section_PR} and \ref{sec:app2} describe how the method can be applied to
 (i) control the oscillation amplitude of a (damped) oscillator, and (ii) couple oscillators in order to lower the threshold on fluctuation amplitude needed for harvesting energy.

\subsection{Mathieu's equation}
Mathieu's equation is an example that can be expressed as \eqref{eq_tempHomoSystem}, with
\[
    A=\begin{bmatrix} 0 & 1 \\ -\omega^2 & 0 \end{bmatrix}, \qquad
    P(t)=\begin{bmatrix} 0 & 0 \\ -\omega^2 \cos(2 \omega t) & 0 \end{bmatrix}, \qquad
    f(t)=\begin{bmatrix} 0 \\ 0 \end{bmatrix}.
\]
It is a prototype for the study of parametric resonance (see Section \ref{Section_PR_sln}). \cite{verhulst2009perturbation}, for instance, used averaging and perturbation analysis to capture $\mathcal{O}(\epsilon^{-1})$-time dynamics of the system, and the technique was extended to multi-dimensional oscillators in \cite{DoVe08} (see also \cite{DeLe08}) and applied in structural engineering for stablization purposes \cite{dohnal2009optimal}. We also refer to \cite{PaSt53, BlBl94, BeSe03, AkDi99, MeZh91, YaSh03, NgLa00} for examples of  applications of parametric resonance in science and engineering. Parametric resonance can lead to not only exponential growths of oscillation amplitudes (a well known phenomenon used by children to make a playground swing go higher by pumping their legs) but also exponential decays (see Corollary \ref{cor_ExponentialDecay} and its remarks; this aspect appears to have received less attention in the literature).

\subsection{Relation with Floquet theory and perturbation analysis}
It is in general difficult to obtain a closed-form solution of a non-autonomous system of the form
\begin{equation}\label{eq:floquetequ}
\dot{x}(t)=F(t)x(t),
\end{equation}
 where $F(t)$ is a periodic matrix-valued function.

Floquet theory \cite{FloquetTheory} (known as Bloch's theorem \cite{AsMe76} in physics)  shows that the fundamental matrix associated with \eqref{eq:floquetequ}, i.e., the matrix-valued  solution of $\dot{\Phi}=F(t)\Phi$ with $\Phi(0)=I$, satisfies
\begin{equation}\label{eq:matBfloq}
    \Phi(t)=Q(t)\exp(t R),
\end{equation}
where $Q(t)$ is a periodic matrix and $R$ is a constant matrix. Although Floquet theory provides important information on the solution structure, it does not, in general, help identify $R$ or $Q(t)$.

If $f\equiv 0$ in our system of interest \eqref{eq_tempHomoSystem}, then $F(t)$ is the sum of a constant matrix and a small periodic perturbation, and  perturbation analysis \cite{Na73,Vela03,MR2382139,SaVeMu10} can be combined with Floquet theory to obtain a long-time approximation of the  fundamental matrix. More precisely, using an asymptotic expansion Ansatz  $\Phi(t)=\Phi_0(t)+\epsilon \Phi_1(t)+\mathcal{O}(\epsilon^2)$ and matching orders yields
\begin{equation}
    \Phi_0(t)=\exp(A t), \qquad \Phi_1(t)=\exp(A t)\int_0^t \exp(-A \tau)P(\tau) \exp(A\tau) d\tau.
    \label{eq_perturbationResult}
\end{equation}
At the same time, \eqref{eq:matBfloq} leads to
\[
    \Phi(nT+t)=Q(nT+t)\exp\left((nT+t)R\right)=Q(t)\exp(tR) \exp(nTR) = \Phi(t)\Phi(T)^n,
\]
where $n$ is an integer, and $T$ is the period. Let
\[
    \tilde{\Phi}(t) = (\Phi_0(t-nT)+\epsilon \Phi_1(t-nT)) (\Phi_0(T)+\epsilon \Phi_1(T))^n, \qquad \text{when } nT \leq t < (n+1)T.
\]
When $t=\mathcal{O}(\epsilon^{-1})$, $n=\lfloor t/T \rfloor=\mathcal{O}(\epsilon^{-1})$, and a standard local-to-global error analysis leads to
\[
    \Phi(t)=\tilde{\Phi}(t) + \mathcal{O}(\epsilon).
\]
Therefore, when $f\equiv 0$, Floquet theory provides an alternative to Theorem \ref{thm_temporalHomogenization}.

Now consider the $f\neq 0$ case. It is natural to consider the approximation:
\begin{equation}
    x(t) \approx \tilde{x}(t) := \tilde{\Phi}(t) \left( x(0) + \int_0^t \tilde{\Phi}(\tau)^{-1} f(\tau) \, d\tau \right) .
    \label{eq_vp1u3bv018ub0g81b50gf1pugo}
\end{equation}

However, there are two issues with this approach:

\paragraph{(i)}
The calculation of \eqref{eq_vp1u3bv018ub0g81b50gf1pugo} can get quite complex. Indeed, since $\tilde{\Phi}$ is piecewise defined, the non-homogeneous term in \eqref{eq_vp1u3bv018ub0g81b50gf1pugo} can be expressed as
\begin{align*}
    &\int_0^t \tilde{\Phi}(\tau)^{-1} f(\tau) \, d\tau = \sum_{n=0}^{\lfloor t/T \rfloor-1} \int_{nT}^{(n+1)T} \tilde{\Phi}(\tau)^{-1} f(\tau) \, d\tau + \int_{\lfloor t/T \rfloor T}^t \tilde{\Phi}(\tau)^{-1} f(\tau) \, d\tau \\
    &= \sum_{n=0}^{\lfloor t/T \rfloor-1} (\Phi_0(T)+\epsilon \Phi_1(T))^{-n} \left( \int_0^T (\Phi_0(\tau)+\epsilon \Phi_1(\tau))^{-1} f(\tau+nT) \, d\tau \right) \\
    &\qquad + (\Phi_0(T)+\epsilon \Phi_1(T))^{-\lfloor t/T \rfloor} \left( \int_0^{t\text{ mod }T} (\Phi_0(\tau)+\epsilon \Phi_1(\tau))^{-1} f(\tau+\lfloor t/T \rfloor T) \, d\tau \right),
\end{align*}
which cannot be reduced further when $f$ is arbitrary.
%

\paragraph{(ii)}
The $\mathcal{O}(\epsilon)$ error in $\tilde{\Phi}(t)$ may (depending on the choice of $f$) result in an $\mathcal{O}(1)$ error after integration to $t=\mathcal{O}(\epsilon^{-1})$ in \eqref{eq_vp1u3bv018ub0g81b50gf1pugo}. This issue could be addressed by further including a 2nd order term $\epsilon^2 \Phi_2(t)$ in the approximation $\tilde{\Phi}(t)$, but this comes with the price of more complex  calculations.

Note our method approximates the fundamental matrix by, up to $t=\mathcal{O}(\epsilon^{-1})$,
    \begin{equation}
        \Phi(t) = \exp(A t) \left( \exp\left( \epsilon \int_0^t \mathcal{G}[\exp(-A \tau)P(\tau) \exp(A\tau)] d\tau \right) + o(\epsilon) \right) ,
        \label{eq_adfjhoqiweubgtoqgforbo}
    \end{equation}
    whereas the aforementioned perturbative Floquet approach uses (when $t<T$),
    \begin{equation}
        \exp(A t) \left( I + \epsilon \int_0^t \exp(-A \tau)P(\tau) \exp(A\tau) d\tau + o(\epsilon) \right).
        \label{eq_adfjhoqiweubgtoqgforbo1}
    \end{equation}
Since the integral of $\mathcal{G}[\exp(-A \tau)P(\tau) \exp(A\tau)]-\exp(-A \tau)P(\tau) \exp(A\tau)$ is small (Lemma \ref{lem_remainderOperator}),
\eqref{eq_adfjhoqiweubgtoqgforbo1} could  be seen as a 1st-order approximation of \eqref{eq_adfjhoqiweubgtoqgforbo}. Including 2nd-order terms in     \eqref{eq_adfjhoqiweubgtoqgforbo1} would improve its accuracy at a price of increased computational complexity, whereas
\eqref{eq_adfjhoqiweubgtoqgforbo} provides a simple high order approximation.

\subsection{Relation with averaging} \label{Section_averaging}
Averaging methods (e.g., \cite{Na73,Vela03,MR2382139,SaVeMu10}) approximate the solution of
\begin{equation}\label{eq:uggwugwy33}
\dot{y}=\epsilon f(y,t)
\end{equation}
 by the solution of $\dot{z}=\epsilon \bar{f}(z)$. These methods can be divided into two categories:
(i) when $f(y,t)$ is $T$-periodic in $t$, the effective dynamics can be obtained using
\[
    \bar{f}(x):=\frac{1}{T}\int_0^T f(x,t) \, dt,
\]
with a $z(t)-y(t)=\mathcal{O}(\epsilon)$ upper-error-bound for $t=\mathcal{O}(\epsilon^{-1})$; (ii) when $f(y,t)$ is not periodic, the effective dynamics can be obtained using
\begin{equation}
    \bar{f}(x):=\lim_{T\rightarrow \infty} \frac{1}{T}\int_0^T f(x,t) \, dt,
    \label{eq_existenceLimit}
\end{equation}
with a $z(t)-y(t)=o(1)$ upper-error-bound for $t=\mathcal{O}(\epsilon^{-1})$ under certain additional assumptions (see Definition 4.2.4 and Theorem 4.3.6 of \cite{SaVeMu10}).

When $f=0$, our approximation can be reproduced by averaging: introduce a change of variables $\Xi(t)=\exp(-At)x(t)$ (when $A$ has only imaginary eigenvalues, this is a common trick used in perturbation analysis \cite{Verhulst:96}); then system \eqref{eq_tempHomoSystem} transforms into
\begin{equation}
    \dot{\Xi}=\epsilon \exp(-At)P(t)\exp(At) \Xi,
    \label{eq_8uboqurbog1ubgo134ubt0o}
\end{equation}
Since $\exp(-At)P(t)\exp(At)$ may be non-periodic in $t$, general averaging theory is required, and it approximates \eqref{eq_8uboqurbog1ubgo134ubt0o} by (when the limit exists)
\begin{equation}
    \dot{\Upsilon}=\epsilon \left( \lim_{T\rightarrow\infty} \frac{1}{T} \int_0^T \exp(-At)P(t)\exp(At) \, dt \right) \Upsilon.
    \label{eq_v89agogyubot241obhyioybdf}
\end{equation}
This limit is identical to \eqref{eq:oedhohiud}, and can be shown to be equivalent to our algebraic approach \eqref{eq_algebraic_representation_intro} (see Proposition \ref{prop:alebraicB} and Section \ref{section_temporalHomogenization}).

Therefore, in the homogeneous case, the contribution of this paper is not to provide a new approximation but to (i) prove a sharper $\mathcal{O}(\epsilon)$ error bound, (ii) prove that the assumption that $\exp(-At)P(t)\exp(At)$ remains uniformly bounded in time is both necessary and sufficient for the accuracy of the approximation \eqref{eq_v89agogyubot241obhyioybdf}, and (iii) illustrate an algebraic alternative for computing the effect matrix (see Propositions \ref{thm_algebraic} and \ref{prop:alebraicB}), which could be used as a guiding tool for designing systems with distinct effective dynamics (see Sections \ref{Section_PR} and \ref{sec:app2}).

When $f \neq 0$,  approximation \eqref{eq_tempHomosedResultbis} is new. One can still introduce slow variables $\Xi(t)=\exp(-At)x(t)-\int_0^t \exp(-A\tau)f(\tau)\,d\tau$ and show
\[
    \dot{\Xi}=\epsilon \exp(-At)P(t)\exp(At) \left( \Xi + \int_0^t \exp(-A\tau) f(\tau) \, d\tau \right).
\]
However, $\epsilon \int_0^t \exp(-A\tau) f(\tau) \, d\tau$ might be exponentially large and this prohibits the application  of classical averaging. For example, if $A=-1$ and $f(t)=1$ (both scalars), $\dot{\Xi}=\epsilon P(t)\Xi+\mathcal{O}(\epsilon \exp(\epsilon^{-1}))$ when $t=\mathcal{O}(\epsilon^{-1})$.

\subsection{Relation with classical homogenization}\label{Section_spatialHomogenization}
As in classical homogenization theory (e.g., \cite{BeLiPa78,  Nguetseng:1989, JiKoOl91, Allaire1992}), the constant
matrix $B$ in Theorem \ref{thm_temporalHomogenization} can be seen as an effective matrix capturing the homogenized effect of the periodic perturbation on the dynamics.

Our results are built on a two-scale expansion technique analogous to the one used in classical homogenization theory (see also \cite{KeCo96}). One major difference is the lack of ellipticity in \eqref{eq_tempHomoSystem}.
See also \cite{garnier1997homogenization, cooper2000parametric, DoLeLe10} for homogenization problems involving time (with different systems of interest).

In the special case of $f=0$, another analogy with classical homogenization is as follows:
let $F(t)=\epsilon^{-1} A+P(t/\epsilon)$, then after rescaling time our system becomes
\begin{equation}
    \dot{X}=F(t)X .
    \label{eq_primalDynamics}
\end{equation}
Let $\mathcal{A}(t)$ be the matrix-valued solution of
\begin{equation}
    \dot{\mathcal{A}}(t)=-\mathcal{A}(t) F(t),
    \label{eq_adjointDynamics}
\end{equation}
and $Y$ be the solution of the 1D problem
\begin{equation}
    \frac{d}{dt} \left( \mathcal{A}(t) \frac{d Y}{dt} \right)=0,
    \label{eq_ellipticHomogenization}
\end{equation}
then it can be shown that $X=\dot{Y}$. Here
\eqref{eq_ellipticHomogenization} is akin to the divergence form  PDE used as a prototypical example in classical homogenization theory  \cite{BeLiPa78, Nguetseng:1989}. Unfortunately, obtaining $\mathcal{A}(t)$ via \eqref{eq_adjointDynamics} is  as difficult as solving the original problem \eqref{eq_primalDynamics}.


Note also that, in the context of stochastic homogenization \cite{MR542557, PapVar82}, as in \eqref{eq:oedhohiud}, the calculation of the effective conductivity requires taking the asymptotic limit of local cell problems.


\subsection{Other related work}
Magnus expansion \cite{Ma54} allows for a  representation of the solution of \eqref{eq:floquetequ} (note $f$ has to be 0) as an infinite series of integrals of increasingly many matrix commutators. For practical applications (see \cite{BlCa09} for a review), the infinite series has to be truncated to a finite number of terms.
In many cases convergence after truncation is not guaranteed or slow (e.g., \cite{BlCa98}), and one often faces such problem when studying $\mathcal{O}(\epsilon^{-1})$ long time behavior of our system of interest \eqref{eq_tempHomoSystem}.

Alternative strategies become available when additional restrictions are placed on the system \eqref{eq:floquetequ} or only coarse estimates are needed.
For instance, stability theory exists for Lappo-Danilevskii systems (which is a small subclass of \eqref{eq:floquetequ}, characterized by the commutation of $F(t)$ with its integrals \cite{Ad95}), or when $F(t)$ is almost constant and the constant part is asymptotically stable \cite{Ad95}. There are also loose bounds of the characteristic matrix $R$ in \eqref{eq:matBfloq} (e.g., \cite{YaSt75a, YaSt75b} and IV.6 of \cite{Ad95}).
There is also a rich literature on the resolution and analysis of periodic time-dependent Schr\"{o}dinger equation (e.g., \cite{Sh65, PeMo93, RaGiFi03}) and, in particular,  on the steady state Schr\"{o}dinger operator with multi-dimensional periodic potentials (e.g., \cite{Sk85, De95, Ca00}). \cite{Wi66, ChTe04} are examples of reviews.
We also refer to \cite{ShamsulAlam2003987, grozdanov1988quantum, maricq1982application} for an incomplete list of additional methods.

This article is restricted to linear systems. Only partial results are available for nonlinear systems. For instance, \cite{Vela03, li2002extension} provide nonlinear generalizations of Floquet theory. Nonlinear analogies to parametric resonance (e.g., autoparametric resonance) have been studied using averaging and perturbation analysis \cite{verhulst2002parametric, fatimah2002bifurcations, verhulst2005autoparametric}; see also \cite{mahmoud2000periodic, zounes2002subharmonic, zhang2002effect, abraham2003approximate} for more references. We also refer to \cite{KoOw13} for the control of a nonlinear model of double-strand DNA via parametric resonance.

\section{Theory}
\label{Section_theory}
\subsection{Algebraic structure}
\begin{Condition}
    Let $t\in\mathbb{R}$, and $P(t)=P(t+2\pi/\omega)$ be a real-matrix-valued periodic function in $L^2$. Assume that $A\in \mathbb{R}^{n\times n}$ is a real matrix (not necessarily diagonalizable and,  possibly, with complex eigenvalues). Assume without loss of generality that $A$ is in Jordan canonical form.
    \label{cd_setting}
\end{Condition}

\begin{Remark}
    The assumption of Jordan canonical form is without loss of generality, because it can be achieved via a change of basis, which affects $P(t)$ but not its periodicity. This assumption is not required by Theorem \ref{thm_temporalHomogenization} or \ref{thm_tempHomoCannot} either, and is only needed by the specific algebraic calculation in Propositions \ref{thm_algebraic} and \ref{prop:alebraicB}.
\end{Remark}

\begin{Lemma}
    Under Condition \ref{cd_setting}, $\exp(-A t) P(t) \exp(A t)$ can be uniquely expressed (in $L^2$ sense, which will no longer be stated in the rest of the paper unless confusion arises) as a linear combination (with coefficients being constant real matrices) of $t^k e^{at} \cos(b t)$ and $t^k e^{at} \sin(b t)$, where $(a,b,k)\in \Sigma$ for some countable set $\Sigma$, in which $a$, $b$ and $k$ components respectively take values in a finite subset of $\R$, a countable subset of $\R$, and $\{0,1,\ldots,2n-3,2n-2\}$.
\label{lem_fourierExpansion}
\end{Lemma}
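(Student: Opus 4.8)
\emph{Proof proposal.} The plan is to expand each of the three factors of $\exp(-At)P(t)\exp(At)$ and then multiply out. Since $A$ is in Jordan canonical form (Condition \ref{cd_setting}), write $A=\bigoplus_i\big(\lambda_i I_{m_i}+N_i\big)$ with $N_i$ nilpotent of size $m_i\le n$ and $\lambda_i=a_i+\mathrm{i}b_i$; then $\exp(\pm At)$ is block-diagonal with blocks $e^{\pm\lambda_i t}\sum_{k=0}^{m_i-1}(\pm t)^k N_i^k/k!$. Hence every entry of $\exp(\pm At)$ is a polynomial in $t$ of degree $\le n-1$ multiplying $e^{\pm a_i t}\cos(b_i t)$ or $e^{\pm a_i t}\sin(b_i t)$ for some $i$, and---since $A$ real implies $\exp(\pm At)\in\R^{n\times n}$, so conjugate blocks may be paired---each such entry is in fact a \emph{real} linear combination of functions $t^k e^{\pm a_i t}\cos(b_i t)$ and $t^k e^{\pm a_i t}\sin(b_i t)$ with $k\le n-1$.

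Next, expand $P$ in its real Fourier series on one period, $P(t)=\sum_{m\ge0}\big(\alpha_m\cos(m\omega t)+\beta_m\sin(m\omega t)\big)$ with constant real matrices $\alpha_m,\beta_m$, the convergence being in $L^2([0,2\pi/\omega])$ and hence, by periodicity, in $L^2$ of every bounded interval. Multiplying the three factors entrywise, a generic term is a constant real matrix times $t^{k_1+k_2}e^{(a_j-a_i)t}\tau_1(b_i t)\tau_2(m\omega t)\tau_3(b_j t)$ with $\tau_1,\tau_2,\tau_3\in\{\cos,\sin\}$ and $k_1,k_2\le n-1$. Product-to-sum identities rewrite $\tau_1\tau_2\tau_3$ as a real linear combination of $\cos$ and $\sin$ of the frequencies $\pm b_i\pm m\omega\pm b_j$. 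So the product is a linear combination, with constant real matrix coefficients, of $t^k e^{at}\cos(bt)$ and $t^k e^{at}\sin(bt)$ with $k=k_1+k_2\in\{0,\dots,2n-2\}$, with $a$ ranging over the finite set $\{a_j-a_i\}_{i,j}$ and $b$ over the countable set $\{\pm b_i\pm m\omega\pm b_j:i,j,\ m\in\mathbb{Z}_{\ge0}\}$. Collecting all terms sharing a triple $(a,b,k)$ yields the expansion over a countable index set $\Sigma$ of the stated form.

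It remains to make sense of the infinite sum and to argue uniqueness. For convergence: on any bounded interval the entries of $\exp(\pm At)$ are bounded, so multiplication by them is a bounded operator on the corresponding $L^2$ space; applying it to the $L^2$-convergent series for $P$ shows the collected series converges in $L^2$ of every bounded interval, which is the asserted sense. For uniqueness: the functions $\{t^k e^{at}\cos(bt)\}\cup\{t^k e^{at}\sin(bt)\}$ indexed by distinct triples $(a,b,k)$ are linearly independent---a consequence of the classical linear independence over $\mathbb{C}$ of $\{t^k e^{\mu t}:k\ge0,\ \mu\in\mathbb{C}\}$, since after rewriting $\cos,\sin$ via complex exponentials a nontrivial real combination of the former becomes a nontrivial combination of the latter---so the coefficient matrix attached to each $(a,b,k)$ is uniquely determined.

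The computation is essentially bookkeeping; the two points that require care, and which I expect to be the only delicate part, are (i) ensuring all coefficient matrices are real despite possibly complex eigenvalues of $A$, handled by pairing conjugate Jordan blocks (equivalently, using $\exp(At)\in\R^{n\times n}$), and (ii) the precise ``$L^2$ sense'' of the statement---the $L^2$-convergence of the countably infinite collected series and the matching uniqueness claim---dispatched by the boundedness-of-multiplication and linear-independence arguments just described.
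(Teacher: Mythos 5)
Your proposal is correct and follows essentially the same route as the paper's proof: entries of $\exp(\pm At)$ from the Jordan-block structure as polynomials times $e^{\pm a_i t}\cos(b_i t)$, $e^{\pm a_i t}\sin(b_i t)$, the Fourier series of $P$, and product-to-sum identities to collect terms indexed by triples $(a,b,k)$. You simply spell out more explicitly the points the paper leaves implicit (realness via conjugate pairing, the $L^2$ convergence of the collected series, and uniqueness via linear independence of $t^k e^{at}\cos(bt)$, $t^k e^{at}\sin(bt)$), which is fine.
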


\begin{proof}
    As a well-known corollary of Jordan canonical form theory (see for instance \cite{Perko}), both $\exp(-A t)$ and $\exp(A t)$ can be uniquely expressed as linear combinations of $t^{r} e^{\pm \lambda t} \cos(\mu t)$ and $t^{r} e^{\pm \lambda t} \sin(\mu t)$, where for each triplet $(r,\lambda,\mu)$, $\lambda$ and $\mu$ correspond to the real and imaginary parts of one of $A$ eigenvalues, and $r$ is less or equal to the number of off-diagonal 1's in the associated Jordan block.

    Also, represent $P(t)$ in Fourier series. Since products of $\cos$ and $\sin$ can be uniquely represented as sums of $\cos$ and $\sin$, the lemma is proved. $k$, $a$ and $b$ depend on $\lambda$, $\mu$, $r$, and Fourier coefficients of $P(t)$.
\end{proof}

\begin{Definition}[Growth operator]
Using the representation given by Lemma \ref{lem_fourierExpansion}:
    \begin{equation}
      \exp(-A t) P(t) \exp(A t) = \sum_{(a,b,k)\in\Sigma} \left( C_{abk} t^k e^{at} \cos(b t) + D_{abk}  t^k e^{at} \sin(b t) \right) ,
      \label{eq_Gexpansion}
    \end{equation}
	we define the growth component of $\exp(-A t) P(t) \exp(A t)$ by
    \begin{align}
        & \mathcal{G}[\exp(-A t) P(t) \exp(A t)] := \nonumber\\
        & \quad \quad \left( \sum_{(a,b,k)\in\Sigma,a>0}+\sum_{(a,b,k)\in\Sigma,a=0,k\neq 0}+\sum_{(a,b,k)\in\Sigma,a=0,k=0,b=0} \right)  \left( C_{abk} t^k e^{at} \cos(b t) + D_{abk}  t^k e^{at} \sin(b t) \right) .
        \label{eq_growthComponent}
    \end{align}
    \label{def_nonOsc1}
\end{Definition}

\begin{Proposition}
    $\exp(-A t) P(t) \exp(A t)$ remains bounded for all $t$, if and only if
    $\mathcal{G}[\exp(-A t) P(t) \exp(A t)]$ is time-independent, i.e., when described in the form given by \eqref{eq_growthComponent}, it does not contain $(a>0,k,b)$ or $(a=0,k\neq 0, b)$ terms.
    \label{thm_boundedness}
\end{Proposition}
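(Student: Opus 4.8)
The plan is to split $M(t):=\exp(-At)P(t)\exp(At)$ into its growth component $G(t):=\mathcal{G}[M(t)]$ and the complementary oscillatory remainder $R(t):=M(t)-G(t)$, using the representation \eqref{eq_Gexpansion} from Lemma \ref{lem_fourierExpansion}. By inspection of \eqref{eq_growthComponent}, $R(t)$ collects exactly the terms $C_{abk}t^k e^{at}\cos(bt)+D_{abk}t^k e^{at}\sin(bt)$ with either $a<0$, or $a=0$ and $k=0$ and $b\neq0$, while $G(t)$ collects the terms with $a>0$, or $a=0$ and $k\neq0$, or $a=b=k=0$. A first observation is that ``$\mathcal{G}[M]$ is time-independent'' is equivalent to ``$G(t)$ contains no $(a>0,k,b)$ or $(a=0,k\neq0,b)$ term'': if no such term is present, $G(t)$ reduces to the single constant summand $C_{000}$; if one is present, $G(t)$ contains a summand with a nontrivial factor $t^k$ ($k\ge1$) or $e^{at}$ ($a>0$), and the uniqueness part of Lemma \ref{lem_fourierExpansion} (i.e.\ linear independence of the functions $t^ke^{at}\cos bt$, $t^ke^{at}\sin bt$) forbids it from being constant. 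Hence it suffices to prove that $M(t)$ is bounded if and only if $G(t)$ contains no term with $a>0$ or ($a=0$, $k\neq0$).

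For the ``if'' direction I would show $R(t)$ is always bounded. Since, by Lemma \ref{lem_fourierExpansion}, the $a$-values form a finite set and the $k$-values lie in $\{0,\dots,2n-2\}$, it is enough to bound, for each fixed admissible pair $(a,k)$ with $a<0$ (resp.\ $a=k=0$), the partial sum $\sum_{b}\bigl(C_{abk}\cos bt+D_{abk}\sin bt\bigr)$; regrouping the expansion \eqref{eq_Gexpansion} back into the product structure $M(t)_{ij}=\sum_{l,m}\exp(-At)_{il}P_{lm}(t)\exp(At)_{mj}$ exhibits this partial sum as a finite combination of terms of the form $e^{i\gamma t}P_{lm}(t)$, whose norm is controlled by $C\|P(t)\|$. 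Multiplying by $\sup_{t\ge0}t^ke^{at}$, which is finite precisely when $a<0$ or ($a=0$, $k=0$), and summing over the finitely many $(a,k)$ gives $\|R(t)\|\le C(1+\|P(t)\|)$, which is bounded (in $L^\infty$, resp.\ in whatever sense matches the regularity assumed on $P$). If, in addition, $G(t)$ has no $a>0$ or $(a=0,k\neq0)$ term, then $G(t)\equiv C_{000}$ and $M(t)=C_{000}+R(t)$ is bounded.

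For the ``only if'' direction I would argue by contraposition. Assume $G(t)$ contains a term with $a>0$ or ($a=0$, $k\neq0$) with nonzero coefficient matrix; let $a^\ast\ge0$ be the largest value of $a$ occurring in such a term, and $k^\ast$ the largest $k$ occurring together with $a^\ast$ (so $k^\ast\ge1$ when $a^\ast=0$). Then $t^{-k^\ast}e^{-a^\ast t}M(t)\to\Theta(t)$ as $t\to\infty$, in the sense that the difference tends to $0$ (uniformly, or in $L^2_{\mathrm{loc}}$, depending on the regularity of $P$), where $\Theta(t):=\sum_{(a^\ast,b,k^\ast)\in\Sigma}\bigl(C_{a^\ast bk^\ast}\cos bt+D_{a^\ast bk^\ast}\sin bt\bigr)$ is a nonzero (Besicovitch) almost-periodic function: indeed every term of $M$ with $a<a^\ast$, or with $a=a^\ast$ and $k<k^\ast$, is annihilated in the limit, and no term with $a>a^\ast$ or with $a=a^\ast$, $k>k^\ast$ exists. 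By Parseval, its mean square $\lim_{T\to\infty}\tfrac1T\int_0^T\|\Theta(t)\|^2\,dt=\tfrac12\sum_{b}\bigl(\|C_{a^\ast bk^\ast}\|^2+\|D_{a^\ast bk^\ast}\|^2\bigr)>0$, so $\Theta$ does not tend to $0$; choosing $t_j\to\infty$ with $\|\Theta(t_j)\|\ge c>0$ gives $\|M(t_j)\|\ge t_j^{k^\ast}e^{a^\ast t_j}\bigl(c-o(1)\bigr)\to\infty$, so $M$ is unbounded. This yields the equivalence.

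The main obstacle is the ``only if'' direction, and within it the need to preclude cancellation among the (possibly infinitely many) oscillatory terms sharing the dominant growth rate $t^{k^\ast}e^{a^\ast t}$: this is precisely where the uniqueness statement of Lemma \ref{lem_fourierExpansion} and the Parseval/mean-square lower bound for $\Theta$ carry the argument, together with the routine but slightly delicate verification that multiplication by $t^{-k^\ast}e^{-a^\ast t}$ kills all subdominant contributions in the required sense. The ``if'' direction is comparatively elementary once the finiteness of the set of $a$-values is invoked.
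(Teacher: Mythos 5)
Your proposal is correct, but it is far more elaborate than what the paper does: the paper's entire proof of Proposition \ref{thm_boundedness} is the single sentence ``This directly follows from Definition \ref{def_nonOsc1}'', i.e.\ the authors treat the equivalence as immediate from the uniqueness of the expansion in Lemma \ref{lem_fourierExpansion} (a term $t^k e^{at}\cos(bt)$ or $t^k e^{at}\sin(bt)$ with $a>0$ or $a=0,k\neq 0$ grows, all other terms do not). What you add is a genuine argument for the two points that this one-liner leaves implicit: first, that the non-growth remainder $R(t)=M(t)-\mathcal{G}[M(t)]$ is bounded even though it is a countable sum over $b$, which you control by regrouping into the finitely many $(a,k)$ classes and bounding each class by $C\|P(t)\|$ (this is essentially the same observation the paper later uses in Lemma \ref{lem_remainderOperator}); and second, and more substantively, that a nonzero collection of growth terms cannot cancel among themselves, which you handle by rescaling out the dominant rate $t^{k^\ast}e^{a^\ast t}$ and invoking the mean-square (Parseval) lower bound for the resulting almost-periodic profile $\Theta$. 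That non-cancellation step is exactly the gap a skeptical reader might see in the paper's appeal to the definition, so your route buys rigor at the cost of length; conversely, the paper's approach is economical and consistent with its working convention that the representation of Lemma \ref{lem_fourierExpansion} is unique in the $L^2$ sense, which also explains why your hedging about the precise sense of boundedness (pointwise versus locally-$L^2$, since $P$ is only assumed square-integrable) is unavoidable and acceptable.
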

\begin{proof}
    This directly follows from Definition \ref{def_nonOsc1}.
\end{proof}

\begin{Remark}
    When $A$ is diagonalizable and real parts of all its eigenvalues are the same, $\exp(-A t) P(t) \exp(A t)$ remains bounded for all $t$. In general, however, whether it is bounded depends not only on $A$ but also on entries of $P(t)$.
\end{Remark}

\begin{Proposition}[Growth operator is equivalent to time-averaging]
    \begin{equation}
        \lim_{T\rightarrow \infty} \frac{1}{T} \int_0^T \exp(-A \tau) P(\tau) \exp(A \tau) \, d\tau
        \label{eq_uidoiufb1ohlbfqlf}
    \end{equation}
    exists if and only if $\exp(-A t) P(t) \exp(A t)$ remains bounded for all $t$, and in this case
    \begin{equation}
        B=\mathcal{G}[\exp(-A t) P(t) \exp(A t)]=\lim_{T\rightarrow \infty} \frac{1}{T} \int_0^T \exp(-A \tau) P(\tau) \exp(A \tau) \, d\tau .
        \label{eq_thm_averaging}
    \end{equation}
    \label{thm_averaging}
\end{Proposition}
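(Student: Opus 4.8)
The plan is to reduce the statement to elementary one-dimensional Ces\`aro-limit computations and then patch them together using the structural facts already established. Write $M(t):=\exp(-At)P(t)\exp(At)$ and start from the representation of Lemma~\ref{lem_fourierExpansion},
\[
 M(t)=\sum_{(a,b,k)\in\Sigma}\Bigl(C_{abk}\,t^k e^{at}\cos(bt)+D_{abk}\,t^k e^{at}\sin(bt)\Bigr),
\]
keeping in mind that the $a$-components range over a finite set, the $k$-components over $\{0,1,\dots,2n-2\}$, and only the Fourier frequencies $b$ are (at most) countably many. The task is then to evaluate $\lim_{T\to\infty}\frac1T\int_0^T \tau^k e^{a\tau}\cos(b\tau)\,d\tau$ (and its $\sin$ analogue) monomial by monomial and to sum.

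A short computation (direct integration for $a\le 0$, integration by parts for the growth cases) gives the following trichotomy. (i) If $a<0$, or if $a=0,k=0,b\neq0$, then $\int_0^T$ of the monomial stays bounded in $T$ — each such monomial possesses an antiderivative of the same decaying-exponential or bounded-oscillatory type — so the Ces\`aro average tends to $0$. (ii) If $(a,b,k)=(0,0,0)$ the monomial is the constant $1$ and the average equals $1$. (iii) If $a>0$, or if $a=0$ and $k\ge1$, then $\frac1T\int_0^T$ of the monomial fails to converge: up to a bounded, not identically vanishing, oscillatory factor it behaves like $T^{-1}T^k e^{aT}$ when $a>0$, like $T^k/(k+1)$ when $a=0,k\ge1,b=0$, and like $T^{k-1}\sin(bT)/b$ when $a=0,k\ge1,b\neq0$ (unbounded for $k\ge2$, bounded but oscillating for $k=1$). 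Now recall Proposition~\ref{thm_boundedness}: $M$ is bounded iff its expansion contains no $(a>0)$ and no $(a=0,k\neq0)$ terms, in which case $\mathcal G[M]$ reduces to the single constant matrix $C_{000}$. If $M$ is bounded, cases (i)–(ii) give $\frac1T\int_0^T M\to C_{000}=\mathcal G[M]$, which is the ``if'' direction together with the value of the limit. If $M$ is unbounded, I would pick a dominant offending monomial — the largest $a>0$ present, or, failing that, $a=0$ with the largest $k\ge1$ — and show its averaged contribution cannot be cancelled by the rest, so the limit does not exist; a convenient way to formalize this is to note that if $\frac1T\int_0^T M\to B$ then necessarily $\frac1T\int_T^{2T}M\to B$ as well, whereas the dominant term forces $\int_T^{2T}M$ to grow exponentially (resp.\ polynomially of degree $\ge1$) in $T$ along a subsequence, a contradiction.

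The step needing genuine care — and the main obstacle — is the interchange of $\lim_{T\to\infty}$ with the infinite sum over $\Sigma$, since the monomial-by-monomial computation only controls finitely many terms at a time. The point is that the infiniteness lives entirely in the Fourier index: for each pair of eigenvalues of $A$ with equal real part the relevant $(a=0)$ frequencies form an arithmetic progression $\beta+\omega\mathbb Z$, and the corresponding block of the remainder $N:=M-C_{000}$ is $e^{i\beta t}$ times a genuinely $2\pi/\omega$-periodic $L^2$ function $g=\sum_m \hat g_m e^{im\omega t}$. Its antiderivative is $\sum_m \hat g_m\bigl(i(\beta+m\omega)\bigr)^{-1}e^{i(\beta+m\omega)t}$, which converges to a \emph{bounded} function because, by Cauchy–Schwarz, $\sum_m |\hat g_m|/|\beta+m\omega|\le\bigl(\sum_m|\hat g_m|^2\bigr)^{1/2}\bigl(\sum_m|\beta+m\omega|^{-2}\bigr)^{1/2}<\infty$ (Parseval for the first factor, convergence of a $\sum 1/m^2$-type tail for the second). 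Hence the $(a=0,k=0,b\neq0)$ part of $N$ is $\dot W$ with $W$ bounded on $[0,\infty)$, so its Ces\`aro average tends to $0$; and the $(a<0)$ part lies in $L^1(0,\infty)$ because the exponential decay dominates the at most polynomially growing, locally $L^2$ oscillatory factor, so its average tends to $0$ as well. The same summability bound shows, in the unbounded case, that the oscillatory factor multiplying the dominant exponential (or polynomial) is bounded and not identically zero, which is exactly what is needed to conclude non-convergence. Combining these gives both implications of the proposition and the identity $B=\mathcal G[M]=\lim_{T\to\infty}\frac1T\int_0^T M$.
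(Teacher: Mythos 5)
Your argument is correct and follows the paper's overall plan: expand $\exp(-At)P(t)\exp(At)$ via Lemma \ref{lem_fourierExpansion}, use Proposition \ref{thm_boundedness} to see that in the bounded case only the constant term $C_{000}=\mathcal{G}[\exp(-At)P(t)\exp(At)]$ survives alongside decaying and purely oscillatory terms, average term by term, and attribute non-existence of the limit in the unbounded case to the growing terms. Where you genuinely diverge is at the two points the paper treats tersely. For the interchange of $\lim_{T\to\infty}$ with the infinite Fourier sum the paper simply invokes ``dominated convergence,'' which is delicate when $P$ is only $L^2$; your substitute --- writing the $(a=0,k=0,b\neq 0)$ part of the remainder as the derivative of a function shown to be bounded via Parseval and Cauchy--Schwarz against $\sum_m|\beta+m\omega|^{-2}$, and putting the $(a<0)$ part in $L^1(0,\infty)$ --- is a cleaner and fully valid justification, essentially a quantitative version of the antiderivative bound of Lemma \ref{lem_remainderOperator}. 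For the unbounded direction the paper only asserts that the limit ``does not exist,'' whereas your observation that $\frac1T\int_0^T\to B$ forces $\frac1T\int_T^{2T}\to B$, contradicted by the dominant growing class, is more explicit. One step you should still write out: when several offending monomials share the dominant pair $(a,k)$ but carry different frequencies $b$ (as happens when an entire Fourier series of a block of $P$ is multiplied by one growing factor), the coefficient of $e^{aT}T^k$ produced by Lemma \ref{lem_asymIntegral} is a nonzero quasi-periodic function of $T$, and you need that such a function neither tends to zero (growth cases) nor converges at all (the borderline case $a=0$, $k=1$, $b\neq 0$, where the averaged contribution stays bounded and non-convergence comes from persistent oscillation); ``bounded and not identically zero'' is one short linear-independence/mean-square argument away from this, and the paper's own proof glosses over the same point.
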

\begin{proof}
    If bounded, $\exp(-A t) P(t) \exp(A t)$ can be written as
    \begin{equation}
        C_0+ \sum_{i} \left( C_i \cos(\omega_i t)+D_i \sin(\omega_i t) \right) + \sum_{(a,b,k)\in\Sigma,a<0} \left( C_{abk} t^k e^{at} \cos(b t) + D_{abk}  t^k e^{at} \sin(b t) \right),
    \end{equation}
    where $C_0$ and $C_i$'s are constant matrices, $\omega_i$'s are constant quasi-periods that not necessarily have a finite least common multiple, and $i$ may take finitely-many or countably-many values (depending on whether Fourier series of $P$ terminates at finite terms). In this case,
    \begin{equation}
        \mathcal{G}[\exp(-A t) P(t) \exp(A t)] = C_0 .
    \end{equation}
    Since $0=\lim_{T\rightarrow \infty} \int_{0}^{T} e^{at} t^k \cos(\omega t) \, dt /T$ and $0=\lim_{T\rightarrow \infty} \int_{0}^{T} e^{at} t^k \sin(\omega t) \, dt /T$ for $a<0$ or $(a=0,t=0)$, we have
    \begin{align}
        C_0&=\lim_{T\rightarrow \infty} \frac{1}{T} \int_0^T \Big( C_0+ \sum_{i} \left( C_i \cos(\omega_i t)+D_i \sin(\omega_i t) \right) \nonumber\\
        &\qquad\qquad + \sum_{(a,b,k)\in\Sigma,a<0} \left( C_{abk} t^k e^{at} \cos(b t) + D_{abk}  t^k e^{at} \sin(b t) \right) \Big) \, dt ,
    \end{align}
    where in the case of infinite summation swapping the limit and infinite sum is justified by dominated convergence.

    If $\exp(-A t) P(t) \exp(A t)$ is unbounded, its representation obtained from  Lemma \ref{lem_fourierExpansion}
    contains terms that grow as $t^k$ ($k>0$) or $\exp(a t)$ ($a>0$), and therefore the integral in \eqref{eq_uidoiufb1ohlbfqlf} does not exist.
\end{proof}

\begin{Proposition}[Algebraic calculation of growth operator]
    \label{thm_algebraic}
    Let $M(t)=\exp(-At)P(t)\exp(At)$. Denote $A$'s eigenvalues by $\lambda_i \pm \sqrt{-1} \mu_i$ (assuming $\mu_i\geq 0$). Let $P^{\cos}_{ij,l}$ and $P^{\sin}_{ij,l}$ be the $l^{th}$ Fourier coefficients of $P(t)$. Let $L_{ij}$ be the set of all nonnegative integers $l$ such that $l \omega=\left| \mu_i \pm \mu_j \right|$ (recall $\omega \geq 0$ is the largest frequency of $P(t)$). For all $i$, let $\alpha_i$ be the identity matrix of the size of $A_{ii}$, and
    \[
        \beta_i:=\begin{bmatrix} 0 & 1 & & \\ -1 & 0 & \ddots & \\ & \ddots & 0 & 1 \\ & & -1 & 0 \end{bmatrix}
    \]
    be the canonical symplectic matrix when $\mu_i\neq 0$, and 0 if $\mu_i=0$, also of the size of $A_{ii}$.

    Then, $\mathcal{G}[M(t)]_{ij}=\mathcal{G}[M(t)_{ij}]$ for all $(i,j)$ pairs. Moreover, under Condition \ref{cd_setting} and boundedness of $M(t)$, $L_{ij}$ is of finite size, and $\mathcal{G}[M(t)_{ij}] \equiv \sum_{l\in L_{ij}} \bar{M}_{ij,l}$, where $\bar{M}_{ij,l}:=0$ if $\lambda_i \neq \lambda_j$; if $\lambda_i = \lambda_j$,
    \begin{equation}
        \bar{M}_{ij,l} := \begin{cases}
         (\beta_i P^{\cos}_{ij,l} \alpha_j + \alpha_i P^{\sin}_{ij,l} \alpha_j + \alpha_i P^{\cos}_{ij,l} \beta_j - \beta_i P^{\sin}_{ij,l} \beta_j)/4 ,
           & \qquad \text{if } \mu_i-\mu_j=l \omega, \\
         (\beta_i P^{\cos}_{ij,l} \alpha_j - \alpha_i P^{\sin}_{ij,l} \alpha_j + \alpha_i P^{\cos}_{ij,l} \beta_j - \beta_i P^{\sin}_{ij,l} \beta_j)/4 ,
           & \qquad \text{if } \mu_j-\mu_i=l \omega, \\
        (-\beta_i P^{\cos}_{ij,l} \alpha_j - \alpha_i P^{\sin}_{ij,l} \alpha_j + \alpha_i P^{\cos}_{ij,l} \beta_j - \beta_i P^{\sin}_{ij,l} \beta_j)/4 ,
           & \qquad \text{if } \mu_i+\mu_j=l \omega, \\
        0, & \qquad \text{otherwise.} \\
        \end{cases}
        \label{eq_algebraic_representation}
    \end{equation}
\end{Proposition}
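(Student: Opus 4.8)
The plan is to compute $M(t)=\exp(-At)P(t)\exp(At)$ blockwise and then extract its growth component using Definition~\ref{def_nonOsc1}. Since $A$ is in Jordan canonical form (Condition~\ref{cd_setting}), it is block-diagonal with blocks $A_{ii}$, so $\exp(\pm At)$ is block-diagonal with blocks $\exp(\pm A_{ii}t)$. Consequently $M(t)_{ij}=\exp(-A_{ii}t)\,P(t)_{ij}\,\exp(A_{jj}t)$ depends only on the $(i,j)$ blocks, which immediately gives $\mathcal{G}[M(t)]_{ij}=\mathcal{G}[M(t)_{ij}]$ because $\mathcal{G}$ acts termwise on the expansion~\eqref{eq_Gexpansion} and the block structure is preserved under the linear operations defining that expansion. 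The first real step is therefore to write down $\exp(\pm A_{ii}t)$ explicitly: for a Jordan block with eigenvalue $\lambda_i\pm\sqrt{-1}\mu_i$, grouping the real and imaginary parts, $\exp(A_{ii}t)=e^{\lambda_i t}\bigl(\cos(\mu_i t)\,\alpha_i+\sin(\mu_i t)\,\beta_i+(\text{strictly polynomial-in-}t\text{ nilpotent corrections})\bigr)$, with $\alpha_i,\beta_i$ as defined in the statement. Under the boundedness hypothesis on $M(t)$, Proposition~\ref{thm_boundedness} tells us the only surviving growth terms have $a=0$, $k=0$, $b=0$; so the nilpotent/polynomial corrections, which carry strictly positive powers of $t$, cannot contribute to $\mathcal{G}$ unless they combine with something to cancel all $t$-dependence, and a degree count shows this cannot happen for the growth component. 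Hence for the purpose of computing $\mathcal{G}[M(t)_{ij}]$ we may replace $\exp(\pm A_{ii}t)$ by $e^{\pm\lambda_i t}(\cos(\mu_i t)\alpha_i\mp\sin(\mu_i t)\beta_i)$.

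Next I would substitute the Fourier series $P(t)_{ij}=\sum_l\bigl(P^{\cos}_{ij,l}\cos(l\omega t)+P^{\sin}_{ij,l}\sin(l\omega t)\bigr)$ and multiply out
\[
M(t)_{ij}=e^{(\lambda_j-\lambda_i)t}\sum_l\bigl(\cos(\mu_i t)\alpha_i-\sin(\mu_i t)\beta_i\bigr)\bigl(P^{\cos}_{ij,l}\cos(l\omega t)+P^{\sin}_{ij,l}\sin(l\omega t)\bigr)\bigl(\cos(\mu_j t)\alpha_j+\sin(\mu_j t)\beta_j\bigr).
\]
The prefactor $e^{(\lambda_j-\lambda_i)t}$ shows at once that if $\lambda_i\neq\lambda_j$ every term is either exponentially growing or exponentially decaying, so its growth component is $0$, matching $\bar M_{ij,l}:=0$ in that case. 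When $\lambda_i=\lambda_j$, the exponential drops out and we are left with a trigonometric polynomial. Using product-to-sum identities, each triple product $\cos/\sin(\mu_i t)\cdot\cos/\sin(l\omega t)\cdot\cos/\sin(\mu_j t)$ expands into a sum of $\cos$ and $\sin$ of the four frequencies $\pm\mu_i\pm l\omega\pm\mu_j$ (with the appropriate sign bookkeeping). By Definition~\ref{def_nonOsc1}, the growth component retains precisely the terms whose total frequency vanishes, i.e.\ those $l$ with $\mu_i\pm\mu_j=l\omega$ or $\mu_i+\mu_j=l\omega$ — which is exactly the condition $l\in L_{ij}$; finiteness of $L_{ij}$ follows because $\omega>0$ fixes at most finitely many such $l$ (and if $\mu_i=\mu_j=0$ with $\omega>0$ only $l=0$ survives). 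The remaining task is the bookkeeping: collect, for each admissible $l$, the constant ($\cos(0)=1$) contribution, being careful that $\cos$ is even and $\sin$ is odd so that, e.g., a term $\sin((\mu_i-\mu_j-l\omega)t)$ contributes $+1$ or $-1$ at zero frequency depending on the sign convention, while $\cos$ contributes the constant $\tfrac12$ per matching pair. Carrying this through the $2\times2\times2=8$ sign patterns and the three resonance cases $\mu_i-\mu_j=l\omega$, $\mu_j-\mu_i=l\omega$, $\mu_i+\mu_j=l\omega$ yields exactly the three matrix combinations in~\eqref{eq_algebraic_representation}, each divided by $4$ (the $\tfrac12$ from product-to-sum applied twice), with the signs on $\beta_i,\beta_j$ terms tracking which of $\mu_i,\mu_j$ appears with a relative minus sign in the resonant combination.

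The last step is to assemble $\mathcal{G}[M(t)_{ij}]\equiv\sum_{l\in L_{ij}}\bar M_{ij,l}$, noting that distinct resonance cases can only coincide in degenerate situations (e.g.\ $\mu_i=0$ or $\mu_j=0$, where $\beta_i$ or $\beta_j$ vanishes and the formulas collapse consistently), so the sum over $l\in L_{ij}$ is well-defined without double counting. Combining this with $\mathcal{G}[M(t)]_{ij}=\mathcal{G}[M(t)_{ij}]$ established at the outset gives the full block-matrix identity. I expect the main obstacle to be the sign and normalization bookkeeping in the product-to-sum expansion: there are many sign patterns to track, the even/odd distinction between $\cos$ and $\sin$ matters for which terms land at zero frequency, and one must be careful that the ordering of the matrix factors $\beta_i P^{\cos}_{ij,l}\alpha_j$ versus $\alpha_i P^{\cos}_{ij,l}\beta_j$ is preserved (matrices do not commute). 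A clean way to manage this is to write $\cos(\theta)=\tfrac12(e^{i\theta}+e^{-i\theta})$, $\sin(\theta)=\tfrac1{2i}(e^{i\theta}-e^{-i\theta})$ throughout, expand everything into a sum of $e^{i(\pm\mu_i\pm l\omega\pm\mu_j)t}$ with matrix coefficients built from $\alpha,\beta,P^{\cos},P^{\sin}$, and then pick off the coefficient of $e^{0}$; this reduces the problem to a mechanical (if lengthy) coefficient extraction and makes the case distinctions in~\eqref{eq_algebraic_representation} transparent.
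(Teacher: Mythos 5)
Your overall route is the same as the paper's: exploit the block-diagonal structure of $\exp(\pm At)$ to reduce to $M(t)_{ij}=\exp(-A_{ii}t)P_{ij}(t)\exp(A_{jj}t)$, use boundedness to discard the polynomial (nilpotent) corrections and reduce to the semisimple rotation factors $\cos(\mu t)\alpha\pm\sin(\mu t)\beta$, expand $P_{ij}$ in Fourier series, and extract the zero-frequency part of the triple trigonometric products via product-to-sum identities (your complex-exponential bookkeeping is just a cleaner way of organizing the same computation the paper does with a real trig identity). The conclusion $\mathcal{G}[M]_{ij}=\mathcal{G}[M_{ij}]$, the restriction to $l\in L_{ij}$, and the three resonance cases are all handled as in the paper.

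There is, however, one step that is wrong as you state it: ``if $\lambda_i\neq\lambda_j$ every term is either exponentially growing or exponentially decaying, so its growth component is $0$.'' By Definition~\ref{def_nonOsc1} the growth operator $\mathcal{G}$ \emph{retains} all terms with $a>0$; an exponentially growing term is its own growth component, not zero. So the prefactor $e^{(\lambda_j-\lambda_i)t}$ only disposes of the case $\lambda_i>\lambda_j$ (decaying, hence annihilated by $\mathcal{G}$). In the case $\lambda_i<\lambda_j$ the vanishing of $\bar M_{ij,l}$ is not a property of $\mathcal{G}$ at all: it follows because the boundedness hypothesis on $M(t)$ forces the coefficient block to vanish, i.e.\ $P^{\cos}_{ij,l}\cos(l\omega t)+P^{\sin}_{ij,l}\sin(l\omega t)\equiv 0$, which is exactly how the paper argues. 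That this distinction matters is visible in Proposition~\ref{prop:alebraicB}, where $\lambda_i<\lambda_j$ with $P_{ij}\neq 0$ is recorded as $\|B_{ij}\|=\infty$, not $0$. The fix is local and uses only the boundedness you already invoke for the $t^k$ corrections, but as written this step would fail.
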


\begin{proof}
    It is not difficult to see from its definition that $\mathcal{G}$ is a linear operator and $\mathcal{G}[M]_{ij}=\mathcal{G}[M_{ij}]$.

    Since $M(t)$ is bounded, each term in $M_{ij}$ that possibly persists after the application of $\mathcal{G}$ is a product of at most 3 trigonometric functions (decaying components will be removed). Let their frequencies be respectively $\mu_i$, $l \omega$, and $\mu_j$. This product yields a non-zero constant term if and only if $\pm \mu_i \pm l\omega \pm \mu_j = 0$. Since only the constant terms will persist after the application of $\mathcal{G}$, it is sufficient to consider only $l^{th}$-modes in the Fourier expansion of $P(t)$ with $l\in L_{ij}$, i.e.,
    \[
        \mathcal{G}[M_{ij}]=\sum_{l\in L_{i,j}} \bar{M}_{ij,l},
    \]
    where
    \[
        \bar{M}_{ij,l}=\mathcal{G}\left[ \exp(-A_{ii} t) \left( P^{\cos}_{ij,l} \cos(l\omega t)+ P^{\sin}_{ij,l} \sin(l\omega t) \right) \exp(A_{jj} t) \right].
    \]
    When $\lambda_i > \lambda_j$, by the definition of $\mathcal{G}$, $\bar{M}_{ij,l}=0$. When $\lambda_i < \lambda_j$, boundedness of $M$ ensures $\left( P^{\cos}_{ij,l} \cos(l\omega t)+ P^{\sin}_{ij,l} \sin(l\omega t) \right)=0$, and therefore $\bar{M}_{ij,l}=0$ too.

    Now consider only the case of $\lambda_i = \lambda_j$. Since boundedness of $M$ rules out presence of $t^k$,
    \[
        \bar{M}_{ij,l} = \mathcal{G}\left[ \exp(-\tilde{A}_{ii} t) \left( P^{\cos}_{ij,l} \cos(l\omega t)+ P^{\sin}_{ij,l} \sin(l\omega t) \right) \exp(\tilde{A}_{jj} t) \right] ,
    \]
    where $\tilde{A}_{ii}$'s are matrices in canonical Jordan form with eigenvalues $\pm \sqrt{-1} \mu_i$ without $A_{ii}$'s off-diagonal blocks, i.e.,
    \[
        \tilde{A}_{ii} = \begin{bmatrix} 0 & \mu_i & & \\ -\mu_i & 0 & \ddots & \\ & \ddots & 0 & \mu_i \\ & & -\mu_i & 0 \end{bmatrix} .
    \]
    Therefore,
    \[
        \bar{M}_{ij,l}=\mathcal{G}\left[ \left( \alpha_i \cos(\mu_i t)-\beta_i \sin(\mu_i t) \right) \left( P^{\cos}_{ij,l} \cos(l\omega t)+ P^{\sin}_{ij,l} \sin(l\omega t) \right) \left( \alpha_j \cos(\mu_j t)+\beta_j \sin(\mu_j t) \right) \right] .
    \]

    It can be computed by basic trigonometric identities that (for arbitrary parameters $a,b,c,d,e,f,\mu,\nu,\Omega$)
    \begin{align*}
        &~ (a \sin \mu t + b \cos \mu t) (c \sin \Omega t + d \cos \Omega t) (e \sin \nu t + f \cos \nu t) \\
        &= \cos((\mu-\nu-\Omega)t) (-bce+ade+acf+bdf)/4 + \cos((\mu+\nu-\Omega)t) (bce-ade+acf+bdf)/4  \\
        &+ \cos((\mu-\nu+\Omega)t) (-bce-ade+acf+bdf)/4 + \cos((\mu+\nu+\Omega)t) (-bce-ade-acf+bdf)/4 \\
        &+ \text{four more sin terms},
    \end{align*}
    and hence we have \eqref{eq_algebraic_representation}.
\end{proof}

\begin{Proposition}[Algebraic calculation of effective matrix] \label{prop:alebraicB}
    Under Condition \ref{cd_setting}, denote $A$'s Jordan blocks by $A_{ii}$. Let $\lambda_i \pm \sqrt{-1} \mu_i$ be the eigenvalue(s) associated to $A_{ii}$. Let $L_{ij}$ be the set of all nonnegative integers $l$ such that $\left| \mu_i \pm \mu_j \right| = l\omega $.
    Then $L_{ij}$ is a finite set, and expressing $B=\mathcal{G}[\exp(-At)P(t)\exp(At)]$ in the same block division as $A$, we have that
    \begin{itemize}
        \item $B_{ij}=0$, if $\lambda_i > \lambda_j$.
        \item $B_{ij}=0$, if $\lambda_i < \lambda_j$ and $P_{ij}=0$.
        \item $\|B_{ij}\|=\infty$, if $\lambda_i < \lambda_j$ and $P_{ij} \neq 0$.
        \item $\|B_{ij}\|=\infty$, if $\lambda_i = \lambda_j$ and the representation of $\exp(-A_{ii} t) P_{ij}(t) \exp(A_{jj} t)$  obtained from  Lemma \ref{lem_fourierExpansion} contains terms in $t^k$  with $k\geq 1$.
        \item $B_{ij}=\sum_{l\in L_{ij}} \bar{M}_{ij,l}$, if $\lambda_i = \lambda_j$ and the representation of $\exp(-A_{ii} t) P_{ij}(t) \exp(A_{jj} t)$  obtained from  Lemma \ref{lem_fourierExpansion} does not contain terms in $t^k$  with $k\geq 1$; $\bar{M}_{ij,l}$ is defined by \eqref{eq_algebraic_representation} in Proposition \ref{thm_algebraic}.
    \end{itemize}
\end{Proposition}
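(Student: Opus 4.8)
The plan is to reduce the statement to a block-by-block analysis and then import the computations already carried out. Since $\mathcal{G}$ acts entrywise on the unique representation furnished by Lemma~\ref{lem_fourierExpansion} --- it merely retains a prescribed subcollection of the terms $t^k e^{at}\cos(bt)$ and $t^k e^{at}\sin(bt)$ --- we have $\mathcal{G}[\exp(-At)P(t)\exp(At)]_{ij}=\mathcal{G}[M_{ij}]$ with $M_{ij}(t):=\exp(-A_{ii}t)\,P_{ij}(t)\,\exp(A_{jj}t)$, exactly as in Proposition~\ref{thm_algebraic}, so it suffices to treat each block $M_{ij}$ on its own. Because $A$ is in Jordan form (Condition~\ref{cd_setting}), we may write $\exp(-A_{ii}t)=e^{-\lambda_i t}R_i(t)$ and $\exp(A_{jj}t)=e^{\lambda_j t}R_j(t)$, where the entries of $R_i,R_j$ are polynomials in $t$ times $\cos(\mu_\bullet t),\sin(\mu_\bullet t)$; hence $M_{ij}(t)=e^{(\lambda_j-\lambda_i)t}\,R_i(t)P_{ij}(t)R_j(t)$ and every nonzero term of its Lemma~\ref{lem_fourierExpansion} expansion carries the exponential factor $e^{(\lambda_j-\lambda_i)t}$ (the lemma applies verbatim to the single block).

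I would then split on the sign of $\lambda_j-\lambda_i$. If $\lambda_i>\lambda_j$, every term of $M_{ij}$ has exponential rate $a=\lambda_j-\lambda_i<0$, is therefore discarded by $\mathcal{G}$, and $B_{ij}=0$. If $\lambda_i<\lambda_j$ and $P_{ij}\equiv0$, then $M_{ij}\equiv0$ and $B_{ij}=0$. If $\lambda_i<\lambda_j$ and $P_{ij}\not\equiv0$, then $M_{ij}\not\equiv0$ because the matrix exponentials are invertible; every nonzero term of $M_{ij}$ has rate $a=\lambda_j-\lambda_i>0$, so all are retained, $\mathcal{G}[M_{ij}]=M_{ij}$, and the envelope $e^{(\lambda_j-\lambda_i)t}$ forces $\|B_{ij}\|=\infty$.

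The substantive case is $\lambda_i=\lambda_j$: here $M_{ij}(t)=R_i(t)P_{ij}(t)R_j(t)$ has all terms of exponential rate $0$, and $\mathcal{G}$ keeps exactly the terms with $k\ge1$ together with the constant term. If the (unique) Lemma~\ref{lem_fourierExpansion} expansion of $M_{ij}$ contains a term $t^k(\cdots)$ with $k\ge1$, uniqueness precludes cancellation, that term survives $\mathcal{G}$, and $\|B_{ij}\|=\infty$. Otherwise $M_{ij}$ is a bounded combination of pure trigonometric terms and $\mathcal{G}[M_{ij}]$ is its mean; at this point the computation in the proof of Proposition~\ref{thm_algebraic} applies verbatim to the single block: replace $A_{ii},A_{jj}$ by their semisimple parts $\tilde A_{ii},\tilde A_{jj}$ (legitimate since no $t^k$ occurs), expand $\bigl(\alpha_i\cos\mu_i t-\beta_i\sin\mu_i t\bigr)\bigl(P^{\cos}_{ij,l}\cos l\omega t+P^{\sin}_{ij,l}\sin l\omega t\bigr)\bigl(\alpha_j\cos\mu_j t+\beta_j\sin\mu_j t\bigr)$ with the product-to-sum identity of that proof, and retain only the $b=0$ (constant) contributions; these arise precisely from Fourier modes $l$ with $\pm\mu_i\pm l\omega\pm\mu_j=0$, i.e.\ $l\in L_{ij}$, so $B_{ij}=\sum_{l\in L_{ij}}\bar M_{ij,l}$ with $\bar M_{ij,l}$ as in \eqref{eq_algebraic_representation}. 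Finiteness of $L_{ij}$ is immediate: with $\omega>0$, each of $l\omega=|\mu_i-\mu_j|$ and $l\omega=|\mu_i+\mu_j|$ has at most one nonnegative-integer solution, so $|L_{ij}|\le2$.

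The bulk of the work --- the trigonometric bookkeeping --- is already done in Proposition~\ref{thm_algebraic}, so the main thing requiring care is the $\lambda_i=\lambda_j$ dichotomy: one must invoke uniqueness of the Lemma~\ref{lem_fourierExpansion} representation both to argue that a surviving $t^k$ term genuinely forces unboundedness and to argue that, in its absence, $M_{ij}$ is truly bounded, so that Proposition~\ref{thm_algebraic} applies to the block in isolation (its derivation uses only boundedness of $M_{ij}$, never of the full matrix). A minor subtlety inherited from Proposition~\ref{thm_algebraic} is that when several of the conditions $\mu_i-\mu_j=l\omega$, $\mu_j-\mu_i=l\omega$, $\mu_i+\mu_j=l\omega$ hold simultaneously (e.g.\ when $\mu_i=\mu_j$, which forces $l=0$), the corresponding contributions in \eqref{eq_algebraic_representation} are to be summed.
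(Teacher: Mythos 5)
Your proof is correct and takes essentially the route the paper intends: the paper states this proposition without a separate proof, treating it as a blockwise consequence of Definition \ref{def_nonOsc1} and Proposition \ref{thm_algebraic}, and your case split on the sign of $\lambda_j-\lambda_i$, the reduction to the semisimple parts $\tilde A_{ii},\tilde A_{jj}$ when no $t^k$ terms occur, and the identification of the surviving constant terms with $l\in L_{ij}$ mirror exactly the computation in the proof of Proposition \ref{thm_algebraic}. Your treatment of the unbounded cases ($\|B_{ij}\|=\infty$) and of the finiteness of $L_{ij}$ merely makes explicit what the paper leaves implicit, so there is nothing to correct.
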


Observe that the presence of terms in $t^k$ with $k\geq 1$ in the representation of $\exp(-A_{ii} t) P_{ij}(t) \exp(A_{jj} t)$  obtained from  Lemma \ref{lem_fourierExpansion} can be checked analytically.  If this representation does not contain such elements, the case $\lambda_i=\lambda_j$  is characterized by only a finite number of Fourier coefficients of $P_{ij}(t)$. Therefore, whether $B$ exists can be checked and its exact expression can be obtained, both in a number of computational steps independent from $\epsilon$.

\subsection{Preparatory analysis}

\begin{Lemma}
    For fixed $a>0,k \in \{0,1,2,\cdots\},b\in \mathbb{R}$ or $a=0,k \in \{1,2,\cdots\},b\in \mathbb{R}$, if $T \gg 0$, the following integrals have asymptotic behavior
    \begin{align*}
        \int_0^T e^{at}t^k \cos(bt)\, dt \sim \frac{+a \cos(bT)+b\sin(bT)}{a^2+b^2} e^{aT} T^k \\
        \int_0^T e^{at}t^k \sin(bt)\, dt \sim \frac{-b \cos(bT)+a\sin(bT)}{a^2+b^2} e^{aT} T^k
    \end{align*}
    in the sense that $f(T)\sim g(T)$ if and only if
    \[
        \lim_{T\rightarrow \infty}\frac{\|f(T)-g(T)\|} {\max \left( \|f(T)\|,\|g(T)\| \right)}=0.
    \]
\label{lem_asymIntegral}
\end{Lemma}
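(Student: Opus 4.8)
The plan is to reduce both integrals to the single complex-valued integral
\[
  I_k(T) := \int_0^T t^k e^{ct}\,dt, \qquad c := a + \sqrt{-1}\,b ,
\]
using $\cos bt = \operatorname{Re} e^{\sqrt{-1}\,bt}$ and $\sin bt = \operatorname{Im} e^{\sqrt{-1}\,bt}$, so that the cosine integral equals $\operatorname{Re} I_k(T)$ and the sine integral equals $\operatorname{Im} I_k(T)$. Under the hypotheses ($a>0$, or $a=0$ with $b\neq 0$, so that the denominator $a^2+b^2$ appearing in the statement is meaningful) the rate $c$ is nonzero, which is exactly what makes the following integrations by parts non-degenerate.

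First I would integrate by parts once to get the recursion $I_k(T) = \frac{T^k e^{cT}}{c} - \frac{k}{c}\,I_{k-1}(T)$ and unfold it (equivalently, verify a known antiderivative by differentiation) into the exact closed form
\[
  I_k(T) = e^{cT}\sum_{j=0}^{k} (-1)^j\,\frac{k!}{(k-j)!}\,\frac{T^{k-j}}{c^{\,j+1}} \;-\; (-1)^k\,\frac{k!}{c^{\,k+1}} .
\]
The key point is that the $j=0$ summand $\frac{T^k e^{cT}}{c}$ strictly dominates: every other summand carries a power $T^{k-j}$ with $j\ge 1$ and the trailing term is constant, so, since $|e^{cT}|=e^{aT}$, the remainder $\rho_k(T) := I_k(T) - \frac{T^k e^{cT}}{c}$ satisfies $\|\rho_k(T)\| \le C\,T^{k-1}e^{aT}$ for $T\ge 1$ (and is simply bounded when $k=0$); in particular it is of strictly lower order in $T$ than the leading term.

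Next I would split $\tfrac1c = \tfrac{a-\sqrt{-1}\,b}{a^2+b^2}$ and read off
\[
  \operatorname{Re}\frac{T^k e^{cT}}{c} = \frac{a\cos bT + b\sin bT}{a^2+b^2}\,e^{aT}T^k, \qquad
  \operatorname{Im}\frac{T^k e^{cT}}{c} = \frac{a\sin bT - b\cos bT}{a^2+b^2}\,e^{aT}T^k ,
\]
which are exactly the asserted right-hand sides $g(T)$ for the cosine and the sine integrals. Since $\operatorname{Re}\rho_k,\operatorname{Im}\rho_k = O(T^{k-1}e^{aT}) = o(T^k e^{aT})$, we get $\|f(T)-g(T)\| = o(T^k e^{aT})$, whereas $\max(\|f(T)\|,\|g(T)\|)$ is of order $T^k e^{aT}$, so their ratio tends to $0$.

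The only point needing a little care is this last comparison: the normalizing quantity $\max(\|f(T)\|,\|g(T)\|)$ equals $T^k e^{aT}$ times the oscillating factor $|a\cos bT+b\sin bT|/(a^2+b^2)$ (resp. its sine analogue), which vanishes on a discrete set of times; so I would phrase the conclusion as: along any $T\to\infty$ bounded away from the zeros of that trigonometric factor one has $\|g(T)\|\gtrsim T^k e^{aT}$ and hence the ratio $\to 0$, which is the leading-order sense in which $f\sim g$ is used in the sequel (alternatively one can pair the cosine and sine integrals, whose leading terms never vanish simultaneously). I expect this bookkeeping to be the main — and essentially the only — obstacle; everything else is the routine closed-form evaluation above.
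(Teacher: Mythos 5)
Your proof is correct, and it takes a somewhat different, more self-contained route than the paper's. The paper splits into cases: for $a>0$ it expresses the integral through the upper incomplete gamma function and invokes the cited asymptotics $\Gamma(s,z)=z^{s-1}e^{-z}\left(1+\mathcal{O}(z^{-1})\right)$, while for $a=0$, $k\geq 1$ it integrates by parts separately; you instead complexify with $c=a+\sqrt{-1}\,b\neq 0$ and unfold the integration-by-parts recursion into an exact finite closed form for $\int_0^T t^k e^{ct}\,dt$, which covers both regimes at once and replaces the gamma-function asymptotics by the elementary remainder bound $\mathcal{O}(T^{k-1}e^{aT})$ (for integer $k$ the incomplete gamma function is exactly this finite sum, so the two computations coincide in substance). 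What your version buys in addition is the explicit handling of the normalization in the definition of $\sim$: the paper's proof simply writes $\sim$ at each step and never confronts the fact that the stated right-hand sides vanish on a discrete set of times $T$, where the literal ratio $\|f-g\|/\max(\|f\|,\|g\|)$ equals $1$ rather than tending to $0$; you flag this and repair it by restricting $T$ away from those zeros or pairing the cosine and sine integrals, which is precisely the leading-order sense in which the lemma is used later (only the magnitude $e^{a/\epsilon}(\epsilon^{-1})^{k}$ at $T=\epsilon^{-1}$ matters in the proof of Theorem \ref{thm_tempHomoCannot}). So the caveat you raise is not a gap in your argument but a minor imprecision in the statement itself, which the paper's own proof leaves unaddressed.
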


\begin{proof}
    \noindent (i) When $a>0$, recall that the upper incomplete gamma function is defined as
    \[
        \Gamma(s,z)=\int_z^\infty t^{s-1} e^{-t} \, dt.
    \]
    Therefore,
    \begin{align*}
        I&:=\int_0^T e^{at} t^k \cos(bt) \, dt = \frac{1}{2} \Big( \left( (-a-\imath b)^{-1-k}+(-a+\imath b)^{-1-k} \right) (1+k)! \\
        &\qquad - \left( -(a-\imath b)^{-1-k} \Gamma(1+k,-(a-\imath b)T) + -(a+\imath b)^{-1-k} \Gamma(1+k,-(a+\imath b)T) \right) \Big).
    \end{align*}
    Note that $\Gamma(s,z)$, when $s$ fixed, $|z|$ large and $|\arg z|<\frac{3}{2}\pi$, has asymptotic behavior (e.g., \cite{NIST:DLMF})
    \[
        \Gamma(s,z)=z^{s-1}e^{-z}(1+\mathcal{O}(z^{-1})).
    \]
    Therefore,
    \begin{align*}
        I &= \frac{1}{2} \Bigg( \left( (-a-\imath b)^{-1-k}+(-a+\imath b)^{-1-k} \right) (1+k)! + \frac{1}{a-\imath b}T^k e^{(a-\imath b)T}\left(1+\mathcal{O}\left(\frac{1}{T}\right)\right) \\
            & \qquad\qquad\qquad\qquad\qquad\qquad + \frac{1}{a+\imath b}T^k e^{(a+\imath b)T}\left(1+\mathcal{O}\left(\frac{1}{T}\right)\right) \Bigg) \\
        &\sim \frac{1}{2} \left( \frac{1}{a-\imath b}T^k e^{aT}(\cos(bT)-\imath \sin(bT)) + \frac{1}{a+\imath b}T^k e^{aT}(\cos(bT)+\imath \sin(bT)) \right) \\
        &= \frac{+a \cos(bT)+b\sin(bT)}{a^2+b^2} e^{aT} T^k .
    \end{align*}

    \noindent (ii) When $a=0, k \in \{1,2,\cdots\}$, integration by parts gives
    \[
        \int_0^T t^k \cos(bt) \, dt \sim \frac{1}{b}\sin(bT)T^k
    \]
    when $T$ is large. That is, the same expression in (i) works.

    \noindent (iii) A procedure similar to (i) and (ii) shows
    \[
        \int_0^T e^{at}t^k \sin(bt)\, dt \sim \frac{-b \cos(bT)+a\sin(bT)}{a^2+b^2} e^{aT} T^k .
    \]
\end{proof}

\begin{Definition}
    Given $\exp(-A t) P(t) \exp(A t)=\sum_{(a,b,k)\in\Sigma} \left( C_{abk} t^k e^{at} \cos(b t) + t^k D_{abk}  e^{at} \sin(b t) \right)$
    (the  representation of $\exp(-A t) P(t) \exp(A t)$  obtained from  Lemma \ref{lem_fourierExpansion})
    and constant vectors $\Upsilon$ and $\Omega$, define the growth component of $\Upsilon-\exp(-A t) P(t) \exp(A t) \Omega$ as
    \begin{align}
        &~ \mathcal{G}[\Upsilon-\exp(-A t) P(t) \exp(A t) \Omega] \nonumber\\
        :&= \Upsilon-\left( \sum_{(a,b,k)\in\Sigma,a>0}+\sum_{(a,b,k)\in\Sigma,a=0,k\neq 0}+\sum_{(a,b,k)\in\Sigma,a=0,k=0,b=0} \right)  \left( t^k e^{at} \cos(b t) C_{abk} \Omega + t^k e^{at} \sin(b t) D_{abk} \Omega  \right) \nonumber\\
         &= \Upsilon-\mathcal{G}[\exp(-A t) P(t) \exp(A t)] \Omega,
    \end{align}
    \label{def_nonOsc2}
    where $\mathcal{G}[\exp(-A t) P(t) \exp(A t)] $ is defined in \eqref{eq_growthComponent}.
\end{Definition}

\begin{Lemma}
    Given constant vectors $\Upsilon$ and $\Omega$, the solution of ODE
    \begin{equation}
        \dot{y}=-\Upsilon+\exp(-A t) P(t) \exp(A t) \Omega
        \label{avjniurnogiufb1oiop4bu1obpi}
    \end{equation}
    remains bounded if and only if
    \begin{equation}
        \mathcal{G}[\Upsilon-\exp(-A t) P(t) \exp(A t) \Omega]=0.
    \end{equation}
    \label{lem_boundedODE}
\end{Lemma}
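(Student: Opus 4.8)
The plan is to integrate \eqref{avjniurnogiufb1oiop4bu1obpi} directly and analyze the asymptotic behavior of the resulting antiderivative termwise. Using the representation of $\exp(-At)P(t)\exp(At)$ from Lemma \ref{lem_fourierExpansion}, we have
\[
    y(t) = y(0) - \Upsilon t + \sum_{(a,b,k)\in\Sigma} \left( C_{abk}\Omega \int_0^t s^k e^{as}\cos(bs)\,ds + D_{abk}\Omega \int_0^t s^k e^{as}\sin(bs)\,ds \right).
\]
Each of these integrals is evaluated (or estimated) via Lemma \ref{lem_asymIntegral}. First I would split $\Sigma$ according to the three regimes appearing in the definition of $\mathcal{G}$ (Definition \ref{def_nonOsc1}): the ``growing'' contributions with $a>0$, or $a=0,k\neq 0$, or $a=0,k=0,b=0$; and the ``bounded'' contributions, namely $a<0$ (any $k,b$) and $a=0,k=0,b\neq 0$.

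For the bounded regime, observe that when $a<0$ the integrals $\int_0^t s^k e^{as}\cos(bs)\,ds$ and $\int_0^t s^k e^{as}\sin(bs)\,ds$ converge as $t\to\infty$ (the integrand decays exponentially), hence stay bounded; and when $a=0,k=0,b\neq 0$ the primitives $\int_0^t\cos(bs)\,ds=\sin(bt)/b$ and $\int_0^t\sin(bs)\,ds=(1-\cos(bt))/b$ are plainly bounded. So the entire bounded regime contributes a bounded function of $t$, regardless of $\Upsilon,\Omega$. (In the countable-sum case one invokes the same dominated-convergence argument used in the proof of Proposition \ref{thm_averaging} to justify termwise treatment.) Consequently, $y(t)$ is bounded if and only if the remaining piece
\[
    -\Upsilon t + \left(\sum_{a>0}+\sum_{a=0,k\neq 0}+\sum_{a=0,k=0,b=0}\right)\left( C_{abk}\Omega \int_0^t s^k e^{as}\cos(bs)\,ds + D_{abk}\Omega \int_0^t s^k e^{as}\sin(bs)\,ds \right)
\]
is bounded.

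Now I handle this growing piece. The term $a=0,k=0,b=0$ contributes $t\,\mathcal{G}[\cdots]$-type linear growth, exactly matching $-\Upsilon t$; collecting these two together gives the scalar factor $\big(\text{(constant part of }\mathcal{G})\Omega - \Upsilon\big)t = -\mathcal{G}[\Upsilon - \exp(-At)P(t)\exp(At)\Omega]\big|_{\text{const}}\, t$ up to the non-constant growing terms. For the genuinely growing terms ($a>0$, or $a=0$ with $k\geq 1$), Lemma \ref{lem_asymIntegral} shows each integral grows like $e^{aT}T^k$ (respectively $T^k\sin$/$\cos$ when $a=0$), and crucially these growth rates are pairwise distinct across distinct $(a,b,k)$ and are not cancelled by the linear term $-\Upsilon t$ unless the corresponding coefficient matrix times $\Omega$ vanishes; this is the standard ``linear independence of quasi-polynomials'' fact. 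The one subtlety — and the step I expect to be the main obstacle — is proving that the growing terms cannot conspire to cancel each other: one must argue that $\{t^k e^{at}\cos(bt), t^k e^{at}\sin(bt)\}$ over the relevant index set are linearly independent as functions, so that boundedness forces each $C_{abk}\Omega$ and $D_{abk}\Omega$ in the growing regime (and $\Upsilon$ minus the $b=0,k=0,a=0$ contribution) to vanish individually; equivalently, $\mathcal{G}[\Upsilon - \exp(-At)P(t)\exp(At)\Omega]=0$. This linear-independence argument is routine for finite sums (Wronskian or iterated differentiation), and extends to the countable case because the $a$-values lie in a finite set and, at fixed $a$, distinct $(b,k)$ again give linearly independent modes after dividing by $e^{at}$; boundedness applied to the partial sums then propagates to the full sum. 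The converse direction is immediate: if $\mathcal{G}[\Upsilon - \exp(-At)P(t)\exp(At)\Omega]=0$, then the growing piece above is identically zero and $y(t)$ equals $y(0)$ plus the bounded regime, hence bounded.
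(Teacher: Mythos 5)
Your argument is correct and follows essentially the same route as the paper: both decompose $\Upsilon-\exp(-At)P(t)\exp(At)\Omega$ via the representation of Lemma \ref{lem_fourierExpansion}, integrate termwise, note that the $a<0$ and $a=0,k=0,b\neq 0$ modes have bounded antiderivatives while the growing modes ($a>0$, or $a=0$ with $k\geq 1$, or the constant mode mismatched with $\Upsilon$) do not, and conclude via non-cancellation of distinct quasi-polynomial growth rates that boundedness is equivalent to $\mathcal{G}[\Upsilon-\exp(-At)P(t)\exp(At)\Omega]=0$. Your treatment is, if anything, slightly more explicit than the paper's about why the growing terms cannot cancel one another.
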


\begin{proof}
    By Lemma \ref{lem_fourierExpansion}, we can assume that
    \begin{equation}
        \Upsilon-\exp(-A t) P(t) \exp(A t) \Omega = \sum_{(a,b,k)\in \Sigma_1} t^k e^{at} \cos(b t) c_{abk} + \sum_{(a,b,k)\in \Sigma_2}  t^k e^{at} \sin(b t) d_{abk}
    \end{equation}
    for some sets $\Sigma_1$ and $\Sigma_2$, and nonzero vectors $c_{abk}$, $d_{abk}$. We adopt the convention that $(a,b=0,k)\not\in \Sigma_2$ so that this decomposition is unique.

    Consider the solutions $y^{\cos}_{abk}$ and $y^{\sin}_{abk}$ to
    \begin{eqnarray*}
        \dot{y}^{\cos}_{abk}=- t^k e^{at} \cos(b t) c_{abk} \\
        \dot{y}^{\sin}_{abk}=- t^k e^{at} \sin(b t) d_{abk} .
    \end{eqnarray*}

    Naturally, when $a>0$, the solutions will not remain bounded. When $a=0$ and $k>0$ (recall $k\geq 0$), they will not be bounded either. When $a=0$ and $k=0$, $y^{\cos}_{abk}$ remains bounded if and only if $b\neq 0$, and $y^{\sin}_{abk}$ is bounded for $b\neq 0$ and undefined for $b=0$.

    Note $y(t)=\sum_{(a,b,k)\in \Sigma_1} y^{\cos}_{abk}(t)+\sum_{(a,b,k)\in \Sigma_2} y^{\sin}_{abk}(t)$ is the unique solution to \eqref{avjniurnogiufb1oiop4bu1obpi}. If all $y^{\cos}_{abk}$ and $y^{\sin}_{abk}$ remain bounded, so do $y(t)$; on the other hand, if some $y^{\cos}_{abk}$ and/or $y^{\sin}_{abk}$ $y(t)$ are unbounded, $y(t)$ will be unbounded too, because cancelation will not happen due to different growth rates of $y^{\cos}_{abk}$ and $y^{\sin}_{abk}$.

    Hence, the necessary and sufficient condition for bounded $y$ is $\Sigma_1$ and $\Sigma_2$ being subsets of $\{(a,b,k) | (a<0) \text{ or } (a=0,k=0,b\neq 0) \}$ (note $b=0$ is meaningless for $\Sigma_2$), which by Definition \ref{def_nonOsc2} is equivalent to
    \begin{equation}
        \mathcal{G}[\Upsilon-\exp(-A t) P(t) \exp(A t) \Omega]=0 .
    \end{equation}
\end{proof}

\begin{Lemma}
    Let $R(t)=\exp(-At)P(t)\exp(At)-\mathcal{G}[\exp(-At)P(t)\exp(At)]$. Then there exists a constant $C$ such that $\|R(t)\| \leq C$ for all $t\geq 0$. Furthermore, it has an antiderivative $\mathcal{R}(t)$ (i.e., $\frac{d}{dt} \mathcal{R}(t) = R(t)$) such that $\|\mathcal{R}(t)\| \leq C$ for all $t\geq 0$ too.
    \label{lem_remainderOperator}
\end{Lemma}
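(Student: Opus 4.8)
The plan is to read $R(t)$ directly off the canonical expansion of Lemma~\ref{lem_fourierExpansion} and then bound the terms that the growth operator leaves behind. Assuming (Condition~\ref{cd_setting}) that $A$ is in Jordan form — conjugating to Jordan form alters neither the boundedness of $R$ nor that of an antiderivative — write $\exp(-At)P(t)\exp(At)=\sum_{(a,b,k)\in\Sigma}\big(C_{abk}\,t^ke^{at}\cos bt+D_{abk}\,t^ke^{at}\sin bt\big)$, with $a$ in a finite set and $k\in\{0,1,\ldots,2n-2\}$. By Definition~\ref{def_nonOsc1}, $R=\exp(-At)P(t)\exp(At)-\mathcal{G}[\exp(-At)P(t)\exp(At)]$ is obtained by deleting exactly the triples with $a>0$, with $a=0,\,k\neq0$, and with $a=0,\,k=0,\,b=0$. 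Hence $R=R_{\mathrm{dec}}+R_{\mathrm{osc}}$, where $R_{\mathrm{dec}}$ collects the $a<0$ terms and $R_{\mathrm{osc}}$ the $a=0,\,k=0,\,b\neq0$ terms; in block form, $R_{\mathrm{dec}}=\sum_{\lambda_i>\lambda_j}\exp(-A_{ii}t)P_{ij}(t)\exp(A_{jj}t)$, while $R_{\mathrm{osc}}$ retains, from the blocks with $\lambda_i=\lambda_j$, only the polynomial-free terms of nonzero frequency. If $\exp(-At)P(t)\exp(At)$ is uniformly bounded — the standing hypothesis of Theorem~\ref{thm_temporalHomogenization}, in force wherever this lemma is subsequently invoked — then Proposition~\ref{thm_averaging} gives $\mathcal{G}[\exp(-At)P(t)\exp(At)]=C_0$ for a constant matrix $C_0$, so $\|R(t)\|=\|\exp(-At)P(t)\exp(At)-C_0\|\leq C$ at once; the substantive part is then the bounded antiderivative, which I assemble from the two pieces.

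For $R_{\mathrm{dec}}$: using $\|\exp(-A_{ii}t)\|\leq q_i(t)e^{-\lambda_i t}$ and $\|\exp(A_{jj}t)\|\leq q_j(t)e^{\lambda_j t}$ with polynomials $q_i,q_j$, one gets $\|R_{\mathrm{dec}}(t)\|\leq q(t)e^{-\delta t}\|P(t)\|$ for some polynomial $q$ and $\delta:=\min_{\lambda_i>\lambda_j}(\lambda_i-\lambda_j)>0$. Since $P$ is $2\pi/\omega$-periodic and in $L^2$, it carries a fixed finite $L^1$-mass over each period, while $\sup_{[m\tau,(m+1)\tau)}q(t)e^{-\delta t}$ decays geometrically in $m$ (with $\tau=2\pi/\omega$); summing over periods gives $R_{\mathrm{dec}}\in L^1([0,\infty))$ in matrix norm. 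Hence $\mathcal{R}_{\mathrm{dec}}(t):=\int_0^tR_{\mathrm{dec}}(s)\,ds$ is well defined, continuous, bounded by $\|R_{\mathrm{dec}}\|_{L^1([0,\infty))}$ for all $t\geq0$, and satisfies $\frac{d}{dt}\mathcal{R}_{\mathrm{dec}}=R_{\mathrm{dec}}$ almost everywhere by Lebesgue differentiation.

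For $R_{\mathrm{osc}}$: expand it as $R_{\mathrm{osc}}(t)=\sum_{b\in\mathcal{B}}\big(C_b\cos bt+D_b\sin bt\big)$, where $\mathcal{B}$ is the countable set of positive numbers of the form $|l\omega\pm\mu_i\pm\mu_j|$ and each $C_b,D_b$ is a finite linear combination of Fourier coefficients of the blocks $P_{ij}$ (this is exactly the triple-product trigonometric bookkeeping of Proposition~\ref{thm_algebraic}). By Parseval, $\sum_{b\in\mathcal{B}}(\|C_b\|^2+\|D_b\|^2)<\infty$; and since $\mathcal{B}$ is discrete, bounded away from $0$, and grows at least linearly along finitely many arithmetic progressions, $\sum_{b\in\mathcal{B}}b^{-2}<\infty$. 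Cauchy--Schwarz then gives $\sum_{b\in\mathcal{B}}(\|C_b\|+\|D_b\|)\,b^{-1}<\infty$. Computing $\mathcal{R}_{\mathrm{osc}}(t):=\int_0^tR_{\mathrm{osc}}(s)\,ds$ term by term — legitimate because the series for $R_{\mathrm{osc}}$ converges in $L^2_{\mathrm{loc}}$ and $g\mapsto\int_0^tg$ is $L^2([0,t])$-continuous — yields $\mathcal{R}_{\mathrm{osc}}(t)=\sum_{b\in\mathcal{B}}b^{-1}\big(C_b\sin bt+D_b(1-\cos bt)\big)$, an absolutely and uniformly convergent series, hence bounded in $t$, with $\frac{d}{dt}\mathcal{R}_{\mathrm{osc}}=R_{\mathrm{osc}}$ almost everywhere. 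Setting $\mathcal{R}:=\mathcal{R}_{\mathrm{dec}}+\mathcal{R}_{\mathrm{osc}}=\int_0^tR(s)\,ds$ gives $\mathcal{R}'=R$ and $\sup_{t\geq0}\|\mathcal{R}(t)\|<\infty$.

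The main obstacle is the oscillatory antiderivative, and relatedly the asymmetry between the two assertions: an $L^2$-periodic $P$ need not be bounded, so a pointwise estimate $\|R(t)\|\leq C$ genuinely leans on the uniform-boundedness hypothesis on $\exp(-At)P(t)\exp(At)$ (under which, as above, it is immediate), whereas boundedness of the antiderivative holds unconditionally. The crux of the latter is the extra factor $b^{-1}$ produced by integrating $R_{\mathrm{osc}}$: it upgrades the mere $\ell^2$-summability of the Fourier-type coefficients $C_b,D_b$ to $\ell^1$-summability, which is precisely what makes $\mathcal{R}_{\mathrm{osc}}$ — unlike $R_{\mathrm{osc}}$ itself — uniformly convergent and therefore bounded; the decaying part is handled by the softer observation that geometric per-period decay against a fixed per-period $L^1$-mass of $P$ places $R_{\mathrm{dec}}$ in $L^1([0,\infty))$.
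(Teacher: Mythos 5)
Your argument is correct, and it uses the same basic decomposition as the paper --- splitting $R$ into the $a<0$ (decaying) terms and the $a=0,k=0,b\neq 0$ (oscillatory) terms of the canonical expansion --- but it differs in how the two assertions are actually established. The paper argues termwise: each surviving term $t^ke^{at}\cos(bt)$ or $t^ke^{at}\sin(bt)$ is bounded, and each has a bounded antiderivative (the $a<0$ terms because $\int t^ke^{at}\,dt$ converges, the purely oscillatory ones because $\sin(bt)/b$, $\cos(bt)/b$ are bounded); it then sums these bounds, implicitly treating the expansion as absolutely summable, which is not automatic when $P$ is merely $L^2$-periodic with infinitely many Fourier modes. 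You supply exactly the missing summability: for the decaying block you bound the whole block by $q(t)e^{-\delta t}\|P(t)\|$ and use the fixed per-period $L^1$ mass of $P$ to get $R_{\mathrm{dec}}\in L^1([0,\infty))$, and for the oscillatory part you combine Parseval ($\ell^2$ coefficients), the fact that the frequency set $\{|l\omega\pm\mu_i\pm\mu_j|\}$ is bounded away from zero and grows linearly, and Cauchy--Schwarz to make the integrated series absolutely and uniformly convergent --- the $b^{-1}$ gained by integration is the real content, and the paper's proof does not articulate it. For the first assertion you deviate more: rather than the paper's termwise bound you invoke the uniform boundedness of $\exp(-At)P(t)\exp(At)$ together with Proposition \ref{thm_averaging} to write $R=\exp(-At)P(t)\exp(At)-C_0$. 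Strictly speaking the lemma as stated carries no such hypothesis, so you are proving a conditional version; but your justification is sound --- for an unbounded $L^2$-periodic $P$ (already in the scalar case $A=0$, $R=P-\bar P$) the unconditional pointwise bound fails, so some hypothesis of this kind is needed, and it is indeed in force (it is the standing assumption of Theorem \ref{thm_temporalHomogenization}) everywhere the lemma is used. Net effect: same skeleton, but your version is more careful about the infinite-sum issues and makes explicit a boundedness assumption the paper's statement and proof leave implicit; the only gloss on your side is the term-by-term integration of the oscillatory series, which you justify by local $L^2$ convergence and which is no worse than the paper's own level of rigor.
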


\begin{proof}
    By the definition of growth operator
    \begin{align}
        R(t)=\left( \sum_{(a,b,k)\in\Sigma,a=0,k=0,b\neq 0}+\sum_{(a,b,k)\in\Sigma,a<0} \right) (C_{abk}t^k e^{at}\cos(bt)+D_{ab}t^k e^{at}\sin(bt)).
        \label{eq_Rstructure}
    \end{align}
    Since $e^{at} t^k$ is bounded for $a<0$ and $t\geq 0$, and $\cos(bt)$ and $\sin(bt)$ are bounded for real $b$ and $t$, $\|R(t)\| \leq C$ for some constant $C$ for all $t\geq 0$.

    Moreover, when $a=0,k=0,b\neq 0$, antiderivatives of $\cos(bt)$ and $\sin(bt)$ are bounded. As for $a<0,b,k$ terms, we note the indefinite integral of $t^k e^{at}$ converges because (i) the integrand is positive, and (ii) $t \times t^k e^{at} \rightarrow 0$ as $t \rightarrow +\infty$. Therefore, the antiderivative of $t^k e^{at} \cos(bt)$ remains bounded as $t\rightarrow +\infty$, because it is bounded by the indefinite integral of $t^k e^{at}$.

    Therefore, $\|\mathcal{R}(t)\| \leq C$ for all $t\geq 0$ too.
\end{proof}

\subsection{Temporal homogenization}
\label{section_temporalHomogenization}
\paragraph{Heuristic derivation.}
    The intuition behind Theorem \ref{thm_temporalHomogenization} lies in the
    introduction of the 2-scale asymptotic expansion ansatz, popular in perturbation analysis and classical homogenization (see, for instance,  \cite{Na73} or \cite{BeLiPa78}):
    \begin{equation}
        x(t)=x_0(\eta,\xi)+\epsilon x_1(\eta,\xi)+\mathcal{O}(\epsilon^2),
        \label{eq_2scaleExpansion}
    \end{equation}
    where $\eta:=\epsilon t$ and $\xi:=t$ correspond to slow and fast timescales, and are treated as independent variables as $\epsilon\rightarrow 0$; $x_i$'s are such that $\| x_0 \| \gg \epsilon \| x_1 \| \gg \cdots$ for at least $t=\mathcal{O}(\epsilon^{-1})$ as $\epsilon\rightarrow 0$.

    Due to the separation of timescales, formally differential operator $\frac{d}{dt}=\frac{\partial}{\partial \xi}+\epsilon\frac{\partial}{\partial \eta}$. Consequently, \eqref{eq_tempHomoSystem} can be written as
    \begin{equation}
        \frac{\partial x}{\partial \xi}+\epsilon \frac{\partial x}{\partial \eta}=Ax+\epsilon P(\xi)x+f(\xi).
    \end{equation}

    Plot the expansion of $x(t)$ (Eq. \ref{eq_2scaleExpansion}) into the above PDE. Matching $\mathcal{O}(1)$ terms leads to
    \begin{equation}
        \frac{\partial x_0}{\partial \xi}=A x_0+f(\xi),
        \label{eq_leadingOrder}
    \end{equation}
    and matching $\mathcal{O}(\epsilon)$ terms leads to
    \begin{equation}
        \frac{\partial x_1}{\partial \xi} + \frac{\partial x_0}{\partial \eta} = A x_1 + P(\xi) x_0.
        \label{eq_firstOrder}
    \end{equation}

    Solving \eqref{eq_leadingOrder}, we get
    \begin{equation}
        x_0=\exp(A\xi) \left( \Omega(\eta) + \int_0^\xi \exp(-A\tau) f(\tau) \, d\tau \right)
        \label{eq_leadingOrder_sln}
    \end{equation}
    for some vector-valued function $\Omega(\cdot)$.

    Substituting \eqref{eq_leadingOrder_sln} into \eqref{eq_firstOrder}, we obtain
    \begin{equation}
        \frac{\partial x_1}{\partial \xi}+\exp(A\xi)\Omega'(\eta)=A x_1+P(\xi)\exp(A\xi)\Omega(\eta)+P(\xi)\int_0^\xi \exp(A(\xi-\tau))f(\tau)\,d\tau.
    \end{equation}

    Let $y(\xi,\eta):=\exp(-A\xi)x_1(\xi,\eta)$, then we have
    \begin{equation}
        \frac{\partial y}{\partial \xi}=-\Omega'(\eta)+e^{-A \xi}P(\xi)e^{A \xi} \Omega(\eta) + F(\xi) ,
        \label{eq_firstOrder_secular}
    \end{equation}
    where $F(s):=e^{-A s}P(s)e^{A s} \int_0^s e^{-A\tau}f(\tau)\,d\tau$.

    To satisfy $\| x_0 \| \gg \epsilon \| x_1 \|$, we require $y(\xi)$ to be bounded by a constant independent of $\epsilon$. Formally, let
    \begin{equation}
        \bar{F}(\eta) := \epsilon \int_{\eta/\epsilon}^{(\eta+1)/\epsilon} F(\xi) \, d\xi
        \label{eq_formalAverageInput}.
    \end{equation}
    Make a decomposition $y=y_1+y_2$, where
    \begin{align*}
        &\frac{\partial y_1}{\partial \xi}=-\Omega'(\eta)+e^{-A \xi}P(\xi)e^{A \xi} \Omega(\eta) + \bar{F}(\eta) ,\\
        &\frac{\partial y_2}{\partial \xi}=F(\xi) - \bar{F}(\eta) .
    \end{align*}
    Since $\eta$ and $\xi$ are independent variables as $\epsilon\rightarrow 0$, $\bar{F}(\eta)$, $\Omega'(\eta)$ and $\Omega(\eta)$ are viewed as constant vectors at the fast timescale of $\xi$. By definition of $\bar{F}$, $y_2$ is bounded as $\epsilon\rightarrow 0$; at the same time, Lemma \ref{lem_boundedODE} suggests $y_1$ is bounded if and only if $\mathcal{G}[\Omega'(\eta)-\exp(-A \xi) P(\xi) \exp(A \xi) \Omega(\eta)+\bar{F}(\eta)]=0$, which leads to (see Definition \ref{def_nonOsc1}):
    \begin{equation}
        \Omega'(\eta)=\mathcal{G}[\exp(-A \xi) P(\xi) \exp(A \xi)] \Omega(\eta)+\bar{F}(\eta).
    \end{equation}
    When $\exp(-A \xi) P(\xi) \exp(A \xi)$ is bounded, $\mathcal{G}[\exp(-A \xi) P(\xi) \exp(A \xi)]$ is a constant (denoted by $B$), and $\Omega$ is a function of $\eta$ only, consistent with the ansatz of scale separation. Going back to original time variable $t$, the above equation is
    \begin{equation}
        \dot{\Omega}(t)=\epsilon B \Omega(t) + \epsilon \bar{F}(\epsilon t).
        \label{eq_cellProblemOld}
    \end{equation}
    However, one problem remains: does the right side of \eqref{eq_formalAverageInput} have a limit as $\epsilon\rightarrow 0$? Rather than imposing extra restrictions on $f$ (such as it is fast/slow), we prefer a general result, and heuristically replace the cell problem \eqref{eq_cellProblemOld} by
    \begin{equation}
        \dot{\Omega}(t)=\epsilon B \Omega(t) + \epsilon F(t).
        \label{eq_cellProblem}
    \end{equation}
    We then prove the effective solution \eqref{eq_leadingOrder_sln} generated by this $\Omega$ still has small error.


\paragraph{Rigorous justification.}
\begin{proof}[Proof of Theorem \ref{thm_temporalHomogenization}]~\\
    Let $\Xi(t)=\exp(-A t)x(t)-\int_0^t \exp(-A\tau) f(\tau)$, then
    \begin{align*}
        \dot{\Xi}(t) &= \epsilon \exp(-At)P(t) \exp(At) \left( \Xi(t) + \int_0^t \exp(-A\tau) f(\tau) \,d\tau \right) \\
        &= \epsilon \exp(-At)P(t) \exp(At) \Xi(t) + \epsilon F(t), \qquad\qquad
        \Xi(0) = x(0).
    \end{align*}
    Since $\exp(-At)P(t) \exp(At)$ is bounded, $\mathcal{G}[\exp(-At)P(t) \exp(At)]$ is a constant. Let it be $B$.
    Consider
    \begin{align*}
        \dot{\Omega}(t) &= \epsilon B \Omega(t) + \epsilon F(t), \qquad\qquad
        \Omega(0) = x(0).
    \end{align*}
    Let $E(t)=\Xi(t)-\Omega(t)$ and $R(t)=\exp(-At)P(t)\exp(At)-B$. Then
    \begin{align*}
        \dot{E}(t) &= \epsilon B E(t) + \epsilon R(t) \Xi(t), \qquad\qquad
        E(0) = 0.
    \end{align*}
    Let $P(t)=\exp(\epsilon B t)$, then
    \begin{align}
        E(t) & =P(t) E(0) + P(t) \int_0^t P(\tau)^{-1} \epsilon R(\tau) \Xi(\tau) \, d\tau \nonumber\\
        &=\epsilon P(t) \int_0^t P(\tau)^{-1} R(\tau) (E(\tau)+\Omega(\tau)) \, d\tau \nonumber\\
        &=\epsilon P(t) \int_0^t P(\tau)^{-1} R(\tau) E(\tau) \, d\tau + I(t),
    \end{align}
    where
    \[
        I(t):=\epsilon P(t) \int_0^t P(\tau)^{-1} R(\tau) \Omega(\tau) \, d\tau.
    \]
    Treat $t$ as fixed for now and let $\mathcal{P}(\tau)=P(t)P(\tau)^{-1}$. Then
    \[
        \mathcal{P}'=-P(t)P(\tau)^{-1} P'(\tau) P(\tau)^{-1}=-P(t)P(\tau)^{-1}\epsilon B P(\tau)P(\tau)^{-1}
        =-\epsilon \mathcal{P} B,
    \]
    where prime means derivative with respect to $\tau$.

    Let $\mathcal{R}$ be the antiderivative of $R$ defined in Lemma \ref{lem_remainderOperator}. Then
    \begin{align*}
        I(t)
        &= \epsilon \int_0^t \mathcal{P}(\tau) R(\tau) \Omega(\tau) \, d\tau \\
        &= \epsilon \int_0^t \mathcal{P}(\tau) d \mathcal{R} (\tau) \Omega(\tau) \\
        &= \epsilon \mathcal{P}(t) \mathcal{R}(t) \Omega(t) - \epsilon \mathcal{P}(0) \mathcal{R}(0) \Omega(0) - \epsilon \int_0^t \mathcal{P}'(\tau) \mathcal{R}(\tau) \Omega(\tau) \, d\tau - \epsilon \int_0^t \mathcal{P}(\tau) \mathcal{R}(\tau) \Omega'(\tau) \, d\tau \\
        &= \epsilon \mathcal{R}(t) \Omega(t) - \epsilon \exp(\epsilon B t) \mathcal{R}(0) \Omega(0) + \epsilon^2 \int_0^t \mathcal{P}(\tau) B \mathcal{R}(\tau) \Omega(\tau) \, d\tau - \epsilon^2 \int_0^t \mathcal{P}(\tau) \mathcal{R}(\tau) (B \Omega(\tau) + F(\tau)) \, d\tau.
    \end{align*}
    Note $B$, $\exp(\epsilon B t)$, and hence $\mathcal{P}(t)$ all remain bounded till $t=\mathcal{O}(\epsilon^{-1})$. Also, $\mathcal{R}(t)$ remains bounded for all time by Lemma \ref{lem_remainderOperator}. Therefore,
    \[
        \| I(t) \| \leq C_1 \epsilon \max_{\tau\in [0,t]} \|\Omega(\tau)\| + C_2 \epsilon \max_{\tau\in [0,t]} \left\| F(\tau) \right\|
    \]
    for $|t|\leq C_3\epsilon^{-1}$ and some constants $C_1,C_2,C_3>0$. Since
    \[
        F(t)=\exp(-At)P(t)\exp(At)\int_0^t \exp(-A\tau)f(\tau)\,d\tau
    \]
    and $\exp(-At)P(t)\exp(At)$ is bounded by assumption, there is some $C>0$ such that
    \[
        \| I(t) \| \leq C\epsilon \left( \max_{\tau\in [0,t]} \|\Omega(\tau)\| + \max_{\tau\in [0,t]} \left\| \int_0^\tau \exp(-A s)f(s)\,ds \right\| \right).
    \]
    Similarly, we have
    \begin{align}
        E(t) &=\epsilon \int_0^t \mathcal{P}(\tau) R(\tau) E(\tau) \, d\tau + I(t) \nonumber\\
        &= \epsilon \mathcal{P}(t) \mathcal{R}(t) E(t)-\epsilon \mathcal{P}(0) \mathcal{R}(0) E(0) - \epsilon \int_0^t \mathcal{P}' \mathcal{R} E \, d\tau - \epsilon \int_0^t \mathcal{P} \mathcal{R} E' \, d\tau + I(t) \nonumber\\
        &= \epsilon \mathcal{R}(t) E(t) + \epsilon \int_0^t \epsilon \mathcal{P} B \mathcal{R} E \, d\tau - \epsilon \int_0^t \mathcal{P} \mathcal{R} \epsilon (B E+R E+R \Omega) \, d\tau + I(t).
        \label{eq_ErrorIntegralRepresentation}
    \end{align}
    Let $J(t):=-\epsilon \int_0^t \mathcal{P} R \epsilon R \Omega \, d\tau+I(t)$. It can be analogously shown that
    \[
        \| J(t) \| \leq C\epsilon \left( \max_{\tau\in [0,t]} \|\Omega(\tau)\| + \max_{\tau\in [0,t]} \left\| \int_0^\tau \exp(-A s)f(s)\,ds \right\| \right)
    \]
    for some $C>0$. Rearranging terms in \eqref{eq_ErrorIntegralRepresentation}, we obtain
    \[
        E(t)=(1-\epsilon \mathcal{R}(t))^{-1} \left( \epsilon^2 \int_0^t (\mathcal{P}B\mathcal{R}-\mathcal{P}\mathcal{R}B-\mathcal{P}\mathcal{R} R)(\tau) E(\tau) \,d\tau + J(t) \right).
    \]
    Let $e(t)=\|E(t)\|$. Since $(\mathcal{P}B\mathcal{R}-\mathcal{P}\mathcal{R}B-\mathcal{P}\mathcal{R} R)(\tau)$ remains bounded till at least $t=\mathcal{O}(\epsilon^{-1})$, we have
    \[
        e(t) \leq \epsilon^2 \int_0^t C e(\tau) \, d\tau + C \epsilon \left( \max_{\tau\in [0,t]} \|\Omega(\tau)\| + \max_{\tau\in [0,t]} \left\| \int_0^\tau \exp(-A s)f(s)\,ds \right\| \right).
    \]
    Gronwall's inequality gives
    \[
        e(t) \leq \exp(\epsilon^2 C t) C \epsilon \left( \max_{\tau\in [0,t]} \|\Omega(\tau)\| + \max_{\tau\in [0,t]} \left\| \int_0^\tau \exp(-A s)f(s)\,ds \right\| \right).
    \]
\end{proof}

\begin{Remark}
     The relative error is quantified in \eqref{eq_errorBound} by comparing the absolute error with the approximated solution after an appropriate scaling.
\end{Remark}

\begin{Remark}
    The inhomogeneous term $f(\cdot)$ may not be small nor periodic. When it is, it can be homogenized. This can be done in our framework by concatenating $x$ with an extra dummy variable $z$, with $z(0)=1$, $\dot{z}=0$, and $f(t)$ replaced by $f(t)z$.
\end{Remark}

The following corollary shows that, in the homogeneous case, one can drop the $\epsilon P(t) x$ term in \eqref{eq_tempHomoSystem} without loss of accuracy if $P(t)$ does not oscillate at a resonant frequency (defined as the difference between the imaginary parts of two eigenvalues of $A$). Unlike Theorems \ref{thm_temporalHomogenization} and \ref{thm_tempHomoCannot}, this is only a sufficient condition.

\begin{Corollary}
    Consider system \eqref{eq_tempHomoSystem}. Assume without loss of generality that the Fourier expansion of $P(t)$ does not contain constant terms (such terms can be absorbed into $A$), and denote by $2\pi/\omega$ the smallest period of $P(t)$. Suppose $f(t)\equiv 0$. Assume that $A$ is diagonalizable and that all its eigenvalues (indicated by $\lambda_i+\sqrt{-1}\mu_i$) have the same real part (i.e., $\lambda_i=\lambda$ for all $i$)\footnote{An example is a mechanical system subject to isotropic dissipation and with bounded trajectory.}. If there is no integer $l$ such that
    \begin{equation}
        | \mu_i \pm \mu_j | = l \omega
    \end{equation}
    for some $i,j \in \{1,\ldots, m\}$, then
    \[
        x(t)=\exp(At)(x(0)+E(t,\epsilon)),
    \]
    with
    \begin{equation}
         \| E(t, \epsilon) \| \leq C \epsilon \exp(\epsilon^2 C t),
    \end{equation}
    for some constant $C$ independent from $t$ and $\epsilon$ when $t\leq C\epsilon^{-1}$.
\end{Corollary}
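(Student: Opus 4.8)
The plan is to apply Theorem \ref{thm_temporalHomogenization} and then show that, under the stated hypotheses, the effective matrix $B$ vanishes. First I would verify that Theorem \ref{thm_temporalHomogenization} is applicable: since $A$ is diagonalizable and all its eigenvalues share the real part $\lambda$, writing $\exp(\pm At)=e^{\pm\lambda t}W^{\pm}(t)$ with $W^{\pm}(t)$ a bounded (finite-trigonometric) matrix factor shows $\exp(-At)P(t)\exp(At)=W^{-}(t)P(t)W^{+}(t)$ has a representation (in the sense of Lemma \ref{lem_fourierExpansion}) with no growing components; i.e.\ it is uniformly bounded in $t$, as already recorded in the Remark following Proposition \ref{thm_boundedness}. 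Since $f\equiv 0$, the inhomogeneous term $F$ in the cell problem \eqref{eq_MainCellProblem} vanishes and $\int_0^t\exp(-A\tau)f(\tau)\,d\tau=0$, so \eqref{eq_MainCellProblem} reduces to $\dot{\Omega}=\epsilon B\Omega$ with $\Omega(0)=x(0)$, whence $\Omega(t)=\exp(\epsilon Bt)x(0)$; and \eqref{eq_tempHomosedResultbis}--\eqref{eq_errorBound} give $x(t)=\exp(At)\left(\exp(\epsilon Bt)x(0)+E(t,\epsilon)\right)$ with $\|E(t,\epsilon)\|\le C\epsilon\exp(\epsilon^2 Ct)\max_{\tau\in[0,t]}\|\exp(\epsilon B\tau)x(0)\|$.

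The heart of the proof is to show $B=0$, which I would do via Proposition \ref{prop:alebraicB}. Because $A$ is diagonalizable, each (real) Jordan block $A_{ii}$ has no off-diagonal $1$'s, so the representation of $\exp(-A_{ii}t)P_{ij}(t)\exp(A_{jj}t)$ furnished by Lemma \ref{lem_fourierExpansion} contains no terms in $t^k$ with $k\ge 1$; moreover all eigenvalues have the same real part, so $\lambda_i=\lambda_j$ for every pair $(i,j)$. Hence the last bullet of Proposition \ref{prop:alebraicB} applies: $B_{ij}=\sum_{l\in L_{ij}}\bar{M}_{ij,l}$ with $L_{ij}=\{l\ge 0:\ |\mu_i\pm\mu_j|=l\omega\}$. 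By the non-resonance hypothesis the only integer that can lie in any $L_{ij}$ is $l=0$ (possible only when $\mu_i=\mu_j$, e.g.\ $i=j$); but by \eqref{eq_algebraic_representation} the matrix $\bar{M}_{ij,0}$ is assembled solely from the zeroth Fourier coefficients $P^{\cos}_{ij,0},P^{\sin}_{ij,0}$ of $P_{ij}$, which vanish since $P$ has no constant Fourier term. Therefore every summand is zero, so $B_{ij}=0$ for all $i,j$, i.e.\ $B=0$. (Equivalently one may argue through \eqref{eq:oedhohiud} and Proposition \ref{thm_averaging}: the entries of $W^{-}(t)P(t)W^{+}(t)$ are sums of triple products of $\cos/\sin$ at frequencies $\mu_i$, $l\omega$, $\mu_j$, whose time averages vanish unless $\pm\mu_i\pm l\omega\pm\mu_j=0$, a relation excluded by non-resonance except for the absent $l=0$ mode.)

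With $B=0$ we have $\Omega(t)\equiv x(0)$, so $\max_{\tau\in[0,t]}\|\Omega(\tau)\|=\|x(0)\|$ is a fixed constant; substituting into the error bound gives $\|E(t,\epsilon)\|\le C\epsilon\exp(\epsilon^2 Ct)\|x(0)\|$, and absorbing $\|x(0)\|$ (independent of $t$ and $\epsilon$) into $C$ yields $\|E(t,\epsilon)\|\le C\epsilon\exp(\epsilon^2 Ct)$ for $t\le C\epsilon^{-1}$, together with $x(t)=\exp(At)(x(0)+E(t,\epsilon))$, as claimed.

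The main obstacle is the second step: correctly invoking Proposition \ref{prop:alebraicB} (checking that its hypotheses $\lambda_i=\lambda_j$ and "no $t^k$ term" hold thanks to diagonalizability and equal real parts) and tracking exactly which frequency combinations survive time-averaging, in particular disposing of the degenerate resonance $l=0$ by the zero-mean assumption on $P$. Everything else --- verifying boundedness, reading off $\Omega$, and collapsing the error bound --- is routine.
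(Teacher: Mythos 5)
Your proposal is correct and follows essentially the same route as the paper, which simply notes that Proposition \ref{thm_algebraic} gives $B=\mathcal{G}[\exp(-At)P(t)\exp(At)]=0$ and then applies Theorem \ref{thm_temporalHomogenization}. You merely spell out the details the paper leaves implicit --- verifying boundedness via diagonalizability and equal real parts, and disposing of the degenerate $l=0$ mode using the zero-mean assumption on $P$ --- which is a faithful elaboration rather than a different argument.
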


\begin{proof}
    Proposition \ref{thm_algebraic} shows that $B=\mathcal{G}[\exp(-At)P(t)\exp(At)]=0$. Then apply Theorem \ref{thm_temporalHomogenization}.
\end{proof}


\begin{proof}[Proof of Theorem \ref{thm_tempHomoCannot}]~\\
    Let $G(t):=\exp(-At) P(t) \exp(At)$. Since $G(t)$ is unbounded in $t$, when written in canonical form (Lemma \ref{lem_fourierExpansion}), it
    contains at least one $e^{at}t^k \cos(bt)$ or $e^{at}t^k \sin(bt)$ term with either $a>0$ or $(a=0,k>0)$. Choose $a,k,b$ that correspond to the fastest growing term. The proof is by contradiction:

    Suppose there exists a constant matrix $B$, independent of the choice of $f$, such that for all initial condition $x_0$ and all $t \leq \bar{C} \epsilon^{-1}$ for some $\bar{C}$,
    \[
        \|E(t,\epsilon)\| \leq C \epsilon \left( \max_{\tau\in [0,t]} \|\Omega(\tau)\| + \max_{\tau\in [0,t]} \left\| \int_0^\tau \exp(-A s)f(s)\,ds \right\| \right)
    \]
    for some $C$. Then the above should hold for a particular choice of $f\equiv 0$. In this case,
    \[
        \Omega(t)=\exp(\epsilon B t)x(0),
    \]
    and therefore as long as $t \leq \bar{C} \epsilon^{-1}$,
    \[
        \|E(t,\epsilon)\| \leq C\epsilon.
    \]
    As before, we have
    \begin{equation}
        \dot{E}(t)=\epsilon B E(t)+\epsilon (G(t)-B) \Xi(t) , \qquad\qquad E(0)=0 ,
        \label{eq_scaledError}
    \end{equation}
    where $\Xi(t)=\exp(-At)x(t)$ satisfies $\dot{\Xi}(t)=\epsilon G(t) \Xi(t)$ and $\Xi(0)=x_0$.

    Variation of constants leads to
    \[
        E(t)=\int_0^t \exp(\epsilon B (t-\tau)) \epsilon (G(\tau)-B) (\Omega(\tau)+E(\tau)) \, d\tau.
    \]
    Rearranging terms, we have
    \begin{equation}
        \epsilon \int_0^t e^{\epsilon B(t-\tau)} G \Omega \, d\tau=E(t)+\epsilon\int_0^t e^{\epsilon B(t-\tau)} B(\Omega+E)\,d\tau-\epsilon\int_0^t e^{\epsilon B(t-\tau)} G E\,d\tau.
        \label{eqn_errorIntegralForm}
    \end{equation}
    Assume without loss of generality $\bar{C}=1$ and choose $t=\epsilon^{-1}$, then right hand side (RHS) of \eqref{eqn_errorIntegralForm} satisfies
    \begin{align*}
        \|\text{RHS of \eqref{eqn_errorIntegralForm}}\| &\leq C\epsilon + \epsilon\int_0^{\epsilon^{-1}} C \cdot C \cdot (C+C\epsilon) \,d\tau + \epsilon \int_0^{\epsilon^{-1}} C \max_{0 \leq s \leq \tau} \|G(s)\| C \epsilon \,d\tau \\
        &\leq C \epsilon^2 \int_0^{\epsilon^{-1}} e^{a\tau}\tau^k \, d\tau.
    \end{align*}
    Lemma \ref{lem_asymIntegral} leads to
    \[
        \|\text{RHS of \eqref{eqn_errorIntegralForm}}\| \leq C \epsilon^2 e^{a/\epsilon} (\epsilon^{-1})^k.
    \]
    On the other hand, the left hand side (LHS) of \eqref{eqn_errorIntegralForm} is
    \[
        \text{LHS of \eqref{eqn_errorIntegralForm}}=\epsilon \int_0^t e^{\epsilon B (t-\tau)} G(\tau) e^{\epsilon B \tau}x(0) \,d\tau.
    \]
    Write $B$ in Jordan canonical form $B=V^{-1}J V$, where
    \[
        J=\begin{bmatrix} \lambda_1 & d_1 & & \\ & \lambda_2 & \ddots & \\ & & \ddots & d_{n-1} \\ & & & \lambda_n \end{bmatrix},
    \]
    $\lambda$'s are $B$ eigenvalues, superdiagonal elements $d$'s are either 0 or 1, and $V$ is orthonormal. Then
    \[
        \text{LHS of \eqref{eqn_errorIntegralForm}}=\epsilon V^{-1} \int_0^t e^{\epsilon J (t-\tau)} V G(\tau) V^{-1} e^{\epsilon J \tau} V x(0) \, d\tau.
    \]
    Let $\bar{G}(\tau)=V G(\tau) V^{-1}$. Since the conjugate transform preserves the matrix norm, $G(t)$, when written in canonical form, still has at least one element that contains an $e^{at}t^k \cos(bt)$ or $e^{at}t^k \sin(bt)$ term. Because LHS of \eqref{eqn_errorIntegralForm} is a linear functional of $G(\cdot)$, assume without loss of generality that
    \[
        \bar{G}_{ij}(\tau)=e^{a\tau}\tau^k \cos(b\tau)
    \]
    for some $i,j\in\{1,2,\cdots,n\}$ (the $e^{a\tau}\tau^k \sin(b\tau)$ case is completely analogous).
    Also, let $y(0)=V x(0)$, then
    \[
        \text{LHS of \eqref{eqn_errorIntegralForm}}=\epsilon V^{-1} \int_0^t e^{\epsilon J (t-\tau)} \bar{G}(\tau) e^{\epsilon J \tau} y(0) \, d\tau .
    \]
    For notational convenience, let
    \[
        L=\int_0^t e^{\epsilon J (t-\tau)} \bar{G}(\tau) e^{\epsilon J \tau} \, d\tau.
    \]
    Suppose $\lambda_i$ and $\lambda_j$ are respectively located in $J$ in $m_1$-by-$m_1$ and $m_2$-by-$m_2$ Jordan diagonal blocks
    \[
        J_1=\begin{bmatrix} \lambda_i & 1 & & \\ & \lambda_i & \ddots & \\ & & \ddots & 1 \\ & & & \lambda_i \end{bmatrix}   \qquad \text{and} \qquad
        J_2=\begin{bmatrix} \lambda_j & 1 & & \\ & \lambda_j & \ddots & \\ & & \ddots & 1 \\ & & & \lambda_j \end{bmatrix}.
    \]
    Isolate the corresponding $m_1$-by-$m_2$ blocks in $L$ and $\bar{G}$ and call them $\hat{L}$ and $\hat{G}$. Then
    \begin{align*}
        \hat{L} &= \int_0^t e^{\epsilon J_1 (t-\tau)} \hat{G}(\tau) e^{\epsilon J_2 \tau} \, d\tau \\
        &= \int_0^t e^{\epsilon \lambda_i(t-\tau)+\epsilon \lambda_j \tau}
        \begin{bmatrix} 1 & \epsilon(t-\tau) & \cdots & \frac{(\epsilon(t-\tau))^{m_1-1}}{(m_1-1)!} \\
                          & 1                & \ddots & \vdots \\
                          &                  & \ddots & \epsilon(t-\tau) \\
                          &                  &        & 1 \end{bmatrix}
        \hat{G}(\tau)
        \begin{bmatrix} 1 & \epsilon \tau & \cdots & \frac{(\epsilon \tau)^{m_2-1}}{(m_2-1)!} \\
                          & 1             & \ddots & \vdots \\
                          &               & \ddots & \epsilon \tau \\
                          &               &        & 1 \end{bmatrix} \,d\tau.
    \end{align*}
    Let $\hat{G}_{\alpha\beta}(\tau)$ be the new location of $\bar{G}_{ij}(\tau)=e^{a\tau}\tau^k \cos(b\tau)$ in submatrix $\hat{G}$. Consider
    \[
        \begin{bmatrix} u_1 & \cdots & u_m \end{bmatrix} = \begin{bmatrix} \hat{G}_{\alpha 1} & \cdots & \hat{G}_{\alpha \beta} & \cdots & \hat{G}_{\alpha m} \end{bmatrix}
        \begin{bmatrix} 1 & \epsilon \tau & \cdots & \frac{(\epsilon \tau)^{m_2-1}}{(m_2-1)!} \\
                          & 1             & \ddots & \vdots \\
                          &               & \ddots & \epsilon \tau \\
                          &               &        & 1 \end{bmatrix},
    \]
    then
    \[
        u_\beta = \hat{G}_{\alpha\beta}+\sum_{i=1}^{\beta-1} \hat{G}_{\alpha i} \frac{(\epsilon \tau)^{\beta-i}}{(\beta-i)!}.
    \]
    Either $u_\beta(\epsilon^{-1})$ is still at the order of $e^{a/\epsilon}(\epsilon^{-1})^k$ (in $\epsilon$) as $\hat{G}_{\alpha\beta}(\epsilon^{-1})$ is, or some later term $\hat{G}_{\alpha i} \frac{(\epsilon \tau)^{\beta-i}}{(\beta-i)!}$ cancels this leading order.

    If the latter case (cancellation), because $\epsilon\tau=\mathcal{O}(1)$, $\hat{G}_{\alpha i}$ must be at this leading order too. In this case, choose a new $\beta$ to be $i$, and repeat the above procedure.

    Because $1\leq i <\beta$ is always true, this procedure will terminate eventually. In the end, there will be some $\beta \in \{1,\cdots,m\}$ such that $u_\beta$ is at the order of $e^{a/\epsilon}(\epsilon^{-1})^k$.

    Now, pick $m_2$-dimensional vector $\hat{y}(0)=\begin{bmatrix} 0 & \cdots & 0 & 1 & 0 & \cdots & 0 \end{bmatrix}$, where the only nonzero element is in column $\beta$. Pick $y(0)$ by padding $\hat{y}(0)$ with 0 elements, such that  the location of $\hat{y}(0)$ in $y(0)$ corresponds to the location of $J_2$ in $J$. If we introduce notation
    \[
        \begin{bmatrix} v_1 \\ v_2 \\ \ldots \\ v_{m_1} \end{bmatrix} = \hat{G} \begin{bmatrix} 1 & \epsilon \tau & \cdots & \frac{(\epsilon \tau)^{m_2-1}}{(m_2-1)!} \\
                          & 1             & \ddots & \vdots \\
                          &               & \ddots & \epsilon \tau \\
                          &               &        & 1 \end{bmatrix} \hat{y}(0),
    \]
    then $v_\alpha = \begin{bmatrix} u_1 & \cdots & u_{m_2} \end{bmatrix} \hat{y}(0) = u_\beta$.

    Using the upper triangular matrix structure again, an analogous argument shows
    \[
        \begin{bmatrix} 1 & \epsilon(t-\tau) & \cdots & \frac{(\epsilon(t-\tau))^{m_1-1}}{(m_1-1)!} \\
                          & 1                & \ddots & \vdots \\
                          &                  & \ddots & \epsilon(t-\tau) \\
                          &                  &        & 1 \end{bmatrix}
        \begin{bmatrix} v_1 \\ v_2 \\ \ldots \\ v_{m_1} \end{bmatrix}
    \]
    also contains an element at the leading order of $e^{a/\epsilon}(\epsilon^{-1})^k$. Lemma \ref{lem_asymIntegral} implies $\hat{L} \hat{y}(0)$ also contains an element at this leading order (up to a constant prefactor due to the $e^{\epsilon \lambda_i(t-\tau)+\epsilon \lambda_j\tau}$ factor involved in $\epsilon^{-1}$ time integral), and therefore so does $L y(0)$.

    Since $V^{-1}$ is orthonormal and hence vector-norm preserving,
    \[
        \| \text{LHS of \eqref{eqn_errorIntegralForm}} \|=\| \epsilon V^{-1} L y(0) \|=\epsilon \| L y(0) \|,
    \]
    and it is at least at the order of $\epsilon e^{a/\epsilon}(\epsilon^{-1})^k$. Since
    \[
        \epsilon e^{a/\epsilon}(\epsilon^{-1})^k \gg C \epsilon^2 e^{a/\epsilon}(\epsilon^{-1})^k > \| \text{RHS of \eqref{eqn_errorIntegralForm}} \|,
    \]
    when $\epsilon$ is small enough, \eqref{eqn_errorIntegralForm} cannot be an equality. This is a contradiction, and hence $B$ does not exist.
\end{proof}

\section{Application 1: Control via parametric resonance}
\label{Section_PR}
\subsection{Parametric resonance in a variant of Mathieu's equation}
\label{Section_PR_sln}
Consider the system
\begin{equation}
    \ddot{x}+\omega^2(1+\epsilon \cos(2\omega t+\theta))x=0 .
    \label{eq_PR_canonical}
\end{equation}
Without the additional phase $\theta$, this is Mathieu's equation, which is known to correspond to parametric resonance (PR for short; see \cite{MathieuEquation, HillEquation, FloquetTheory} for seminal discussions on Mathieu's equation and its generalization known as Hill's equation, with motivations in celestial mechanics; see also \cite{YaSt75b,LaLi76,MaWi04} for some more modern reviews).

This system corresponds to the canonical form \eqref{eq_tempHomoSystem} with
\[
    A=\begin{bmatrix} 0 & 1 \\ -\omega^2 & 0 \end{bmatrix}, \qquad
    P(t)=\begin{bmatrix} 0 & 0 \\ -\omega^2 \cos(2 \omega t+\theta) & 0 \end{bmatrix}, \qquad
    f(t)=\begin{bmatrix} 0 \\ 0 \end{bmatrix}.
\]
A direct computation gives
\[
    B=\mathcal{G}[\exp(-A t) P(t) \exp(A t)]=-\frac{1}{4}\begin{bmatrix} \omega \sin\theta & \cos\theta \\ \omega^2 \cos\theta & -\omega\sin\theta \end{bmatrix}.
\]
This matrix has $0$ trace and $-\omega^2/16$ determinant, and therefore
\[
    \exp(\epsilon B t)=\begin{bmatrix} \cosh \frac{\omega \epsilon t}{4}-\sin\theta \sinh \frac{\omega \epsilon t}{4} & -\cos\theta\sinh \frac{\omega \epsilon t}{4}/\omega \\ -\omega \cos\theta \sinh \frac{\omega \epsilon t}{4} & \cosh \frac{\omega \epsilon t}{4}+\sin\theta\sinh \frac{\omega \epsilon t}{4} \end{bmatrix}.
\]
Hence, we have, till at least $t=\mathcal{O}(\epsilon^{-1})$,
\begin{align}
    x(t) &=\left[ x(0) \cosh \left( \frac{\epsilon \omega}{4}t \right)- \left(\frac{\dot{x}(0)}{\omega} \cos\theta+x(0)\sin\theta \right) \sinh \left( \frac{\epsilon \omega}{4}t \right) \right] \cos\omega t \nonumber\\
    &+ \left[ \frac{\dot{x}(0)}{\omega} \cosh \left( \frac{\epsilon \omega}{4}t \right)- \left(x(0) \cos\theta-\frac{\dot{x}(0)}{\omega}\sin\theta \right) \sinh \left( \frac{\epsilon \omega}{4}t \right) \right] \sin\omega t + \mathcal{O}(\epsilon).
    \label{eq_PRsolution}
\end{align}

\begin{Corollary}[Exponential decay]
    When $\tan \frac{\theta}{2}=\frac{x(0)-\dot{x}(0)/\omega}{x(0)+\dot{x}(0)/\omega}$,
    \[
        x(t)=\exp(-\epsilon\omega t/4)(x(0) \cos(\omega t)+x'(0) \sin(\omega t)/\omega)+\mathcal{O}(\epsilon)
    \]
    till at least $t=\mathcal{O}(\epsilon^{-1})$.
    \label{cor_ExponentialDecay}
\end{Corollary}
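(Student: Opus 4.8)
The plan is to work directly from the closed-form long-time expansion \eqref{eq_PRsolution}, which already splits $x(t)$ into a $\cos\omega t$-component and a $\sin\omega t$-component whose coefficients are affine combinations of $\cosh(\epsilon\omega t/4)$ and $\sinh(\epsilon\omega t/4)$. Writing $a:=x(0)$, $b:=\dot{x}(0)/\omega$ and $\xi:=\epsilon\omega t/4$, the bracketed coefficients in \eqref{eq_PRsolution} are $a\cosh\xi-(b\cos\theta+a\sin\theta)\sinh\xi$ and $b\cosh\xi-(a\cos\theta-b\sin\theta)\sinh\xi$. Since $\cosh\xi-\sinh\xi=e^{-\xi}$, it suffices to show that the phase condition forces the two scalar identities $b\cos\theta+a\sin\theta=a$ and $a\cos\theta-b\sin\theta=b$ to hold simultaneously; then the two coefficients collapse to $a\,e^{-\xi}$ and $b\,e^{-\xi}$ respectively, and adding the two components gives exactly $e^{-\epsilon\omega t/4}\bigl(x(0)\cos\omega t+x'(0)\sin\omega t/\omega\bigr)$, with the $\mathcal{O}(\epsilon)$ remainder (valid for $t\le C\epsilon^{-1}$) inherited verbatim from \eqref{eq_PRsolution}.

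The second step is the algebraic identity. Introduce the Weierstrass parameter $u:=\tan(\theta/2)$, so that $\sin\theta=2u/(1+u^2)$ and $\cos\theta=(1-u^2)/(1+u^2)$. Substituting into $b\cos\theta+a\sin\theta=a$ and clearing the denominator gives $b(1-u^2)=a(1-u)^2$, i.e. $a(1-u)^2=b(1-u)(1+u)$, which (away from $u=1$) reduces to $a(1-u)=b(1+u)$, equivalently $u=(a-b)/(a+b)$. The same substitution in $a\cos\theta-b\sin\theta=b$ gives $a(1-u^2)=b(1+u)^2$, i.e. $a(1-u)(1+u)=b(1+u)^2$, which (away from $u=-1$) again reduces to $a(1-u)=b(1+u)$, hence once more $u=(a-b)/(a+b)$. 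Since $(a-b)/(a+b)=\bigl(x(0)-\dot{x}(0)/\omega\bigr)/\bigl(x(0)+\dot{x}(0)/\omega\bigr)$, this is precisely the hypothesis $\tan(\theta/2)=\bigl(x(0)-\dot{x}(0)/\omega\bigr)/\bigl(x(0)+\dot{x}(0)/\omega\bigr)$. Thus the single hypothesis yields both coefficient identities, and substituting back into \eqref{eq_PRsolution} completes the proof.

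I do not expect a genuine obstacle here, since the statement is a one-line substitution into \eqref{eq_PRsolution}; the only point requiring care is the handling of the degenerate cases in which the manipulations above are not literally valid, namely $x(0)+\dot{x}(0)/\omega=0$ (where the hypothesis reads $\theta=\pi$, so $\sin\theta=0$, $\cos\theta=-1$), $\cos\theta=0$, and $u=\pm1$. In each of these boundary situations one re-derives $b\cos\theta+a\sin\theta=a$ and $a\cos\theta-b\sin\theta=b$ by plugging the explicit values of $\sin\theta,\cos\theta$ into \eqref{eq_PRsolution} directly, rather than dividing through to obtain $a(1-u)=b(1+u)$, so the potentially-invalid cancellation is never actually used. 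Apart from this bookkeeping, the argument is immediate.
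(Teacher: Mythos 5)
Your proposal is correct and follows essentially the same route as the paper: substitute into \eqref{eq_PRsolution}, use $\cosh\xi-\sinh\xi=e^{-\xi}$ to reduce the claim to the two identities $b\cos\theta+a\sin\theta=a$ and $a\cos\theta-b\sin\theta=b$, and verify these from $\tan(\theta/2)=(a-b)/(a+b)$ via half-angle identities (your Weierstrass substitution $u=\tan(\theta/2)$ is just the paper's $\sin\theta=2\sin\frac{\theta}{2}\cos\frac{\theta}{2}$, $\cos\theta=\cos^2\frac{\theta}{2}-\sin^2\frac{\theta}{2}$ in rational form). Your explicit treatment of the degenerate cases $u=\pm1$ and $x(0)+\dot{x}(0)/\omega=0$ is slightly more careful than the paper's one-line proof, but the argument is the same.
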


\begin{proof}
    Since $\cosh(x)\equiv \exp(x)/2+\exp(-x)/2$ and $\sinh(x)\equiv \exp(x)/2-\exp(-x)/2$ for all $x$, it suffices to show the equivalency of $\tan \frac{\theta}{2}=\frac{x(0)-\dot{x}(0)/\omega}{x(0)+\dot{x}(0)/\omega}$, $x(0)=\dot{x}(0)/\omega \cos\theta+ x(0) \sin\theta$ and $\dot{x}(0)/\omega=x(0)\cos\theta-\dot{x}(0)/\omega\sin\theta$.

    This is immediate upon using basic trigonometric identities $1=\cos^2\frac{\theta}{2}+\sin^2\frac{\theta}{2}$, $\cos\theta=\cos^2\frac{\theta}{2}-\sin^2\frac{\theta}{2}$ and $\sin\theta=2\sin\frac{\theta}{2}\cos\frac{\theta}{2}$.
\end{proof}

\begin{Remark}
    Although parametric resonance oftentimes leads to exponentially growing oscillations, it may, as observed in \cite{LaLi76},  also lead to exponentially decaying solutions. For a  2-dimensional periodic linear ODE system \eqref{eq_PR_canonical} with trace-free time-averaged coefficient matrix,
 Floquet theory (see for instance \cite{Verhulst:96}) guarantees that exponentially growing and decaying solutions always come in pairs.
    Corollary \ref{cor_ExponentialDecay}  shows how to obtain this decaying solution. Note that the decay can either be achieved by a careful choice of initial condition (such that $x(0)=\dot{x}(0)/\omega$), or  by adding a phase in the perturbation to adjust to arbitrary initial condition.
\end{Remark}

\begin{Remark}
    For $\theta$ such that $\tan \frac{\theta}{2} \neq \frac{x(0)-\dot{x}(0)/\omega}{x(0)+\dot{x}(0)/\omega}$, when $t$ is large $x(t)$ will be dominated by exponentially growth. However, when $\theta/2=\arctan \frac{a-b}{a+b}+\mathcal{O}(\epsilon)$, it can be shown that $x(t)$ decays when $t$ is not too large; this is why the proposed method of control (see  Section \ref{Section_control}) is robust to small perturbations in $\theta$ caused by implementation errors.
\end{Remark}

\subsection{Control of oscillations}
\label{Section_control}

Given a smooth enough, positive-valued function $f(t)$, our purpose is to control the amplitude of the oscillations of the solution of
\begin{equation}\label{eq:omeha0}
    \ddot{x}+\omega^2\left(1+\epsilon \cos(2 \omega t+\theta)\right)x=0
\end{equation}
so that it follows approximately $f(t)$. We will achieve this control by changing the values of $\epsilon$ and $\theta$ over a finite number of time intervals.

\paragraph{Assumption:} We will assume that $f$ is slowly varying when compared to the time scale $0<1/\omega<\infty$, i.e., that
 $f(t)\in \mathcal{C}^1([0,T])$ and
\begin{equation}
    \left| \frac{1}{\omega} \frac{d}{dt}\log f(t) \right| \ll 1
    \label{eq_controlCondition1}
\end{equation}
and
\begin{equation}
    \left| \frac{1}{\omega} \frac{d}{dt} f(t) \right| \ll 1
    \label{eq_controlCondition2}
\end{equation}
for all $t \in [0,T]$, where $T$ is the end time of the control.

The following algorithm describes how the solution of \eqref{eq:omeha0} can be controlled by changing values of $\epsilon$ and $\theta$ on time intervals of length $H$.

\begin{Algorithm}[Control of oscillations by parametric resonance]~\\
\begin{itemize}
\item
    Let $H:=M/\omega$, where $M$ is a pre-set $\mathcal{O}(1)$ constant ($M=2$ in this paper).
\item
    At each time step, i.e., $t=nH$ for  $n\in \mathbb{N}$, compute $r:= f(t+H) \Big/ \sqrt{x(t)^2+\dot{x}(t)^2/\omega^2}$; Let $a=x(t)$ and $b=\dot{x}(t)/\omega$.
\item
    If $r\geq 1$, let $\epsilon=\frac{\log(r)}{\omega H}$ and $\theta=2 \arctan \frac{a+b}{b-a}$ for $t\in [nH,(n+1)H)$.
\item
    If $r\leq 1$, let $\epsilon=-\frac{\log(r)}{\omega H}$ and $\theta=2 \arctan \frac{a-b}{a+b}$ for $t\in [nH,(n+1)H)$.
\item
    $n\rightarrow n+1$ and iterate until $n=\lfloor T/H \rfloor$.
\end{itemize}
\label{alg_control}
\end{Algorithm}

This algorithm works in the sense that it leads to a solution $x(t)$ such that $\sqrt{x(t)^2+\dot{x}(t)^2/\omega^2} \approx f(t)$ for all $t \in [0,T]$. The idea is to approximate $f(\cdot)$ by a piecewise-exponential function with piece-width $H$.

The condition given by \eqref{eq_controlCondition2} ensures that $f(\cdot)$ changes very little within a step of length $H$, so that it is well approximated by a piecewise-linear function with piece-width $H$.

The condition given by \eqref{eq_controlCondition1} leads to
\[
    \left| \frac{\log f(t+H)-\log f(t)}{\omega H} \right| \ll 1 .
\]
That is, if $f(t+H)/f(t)=\exp(\epsilon \omega H/4)$, then $\epsilon\ll 1$. Therefore, as Corollary \ref{cor_ExponentialDecay} shows, the choice of $\theta$ in the algorithm enables a decrease of oscillation amplitude from $\approx f(nH)$ at step $n$ to $\approx f((n+1)H)$ at step $n+1$ (or increase by an analogous reason). Furthermore, since $\epsilon \omega H/4=\epsilon M/4 \ll 1$, the envelope of $f(nH) \exp(\epsilon \omega (\tau-nH)/4), \tau \in [nH,(n+1)H]$ is close to a piecewise-linear approximation of $f(\tau), \tau\in [nH,(n+1)H]$.

In addition, since we use $r= f(t+H) \Big/ \sqrt{x(t)^2+\dot{x}(t)^2/\omega^2}$ but not $r= f(t+H) / f(t)$, the approximation error from the previous step will not affect the current step.

\begin{Remark}
    Conditions  \eqref{eq_controlCondition1} and \eqref{eq_controlCondition2} can be satisfied by choosing $\omega$ large enough, as long as $\log f(t)$ is $\mathcal{C}^1$. This is due to the extreme value theorem and the compactness of $[0,T]$. That is to say, as long as the desired signal is differentiable, it can be approximated by the envelope of high (enough) frequency oscillations.
\end{Remark}

\paragraph{Numerical illustration:}
We arbitrarily chose a function $f(t)=(t-6)(t-5)(t-4)(t-3)(t+0.1)+10$ to demonstrate Algorithm \ref{alg_control}. $f$ is chosen to be a high degree polynomial so that its graph is nontrivial, and the constant is chosen such that $f(t)>0$ for all $t>0$.

\begin{figure}[htb]
\centering
\footnotesize
\subfigure[$\omega=200$]{
\includegraphics[width=0.48\textwidth]{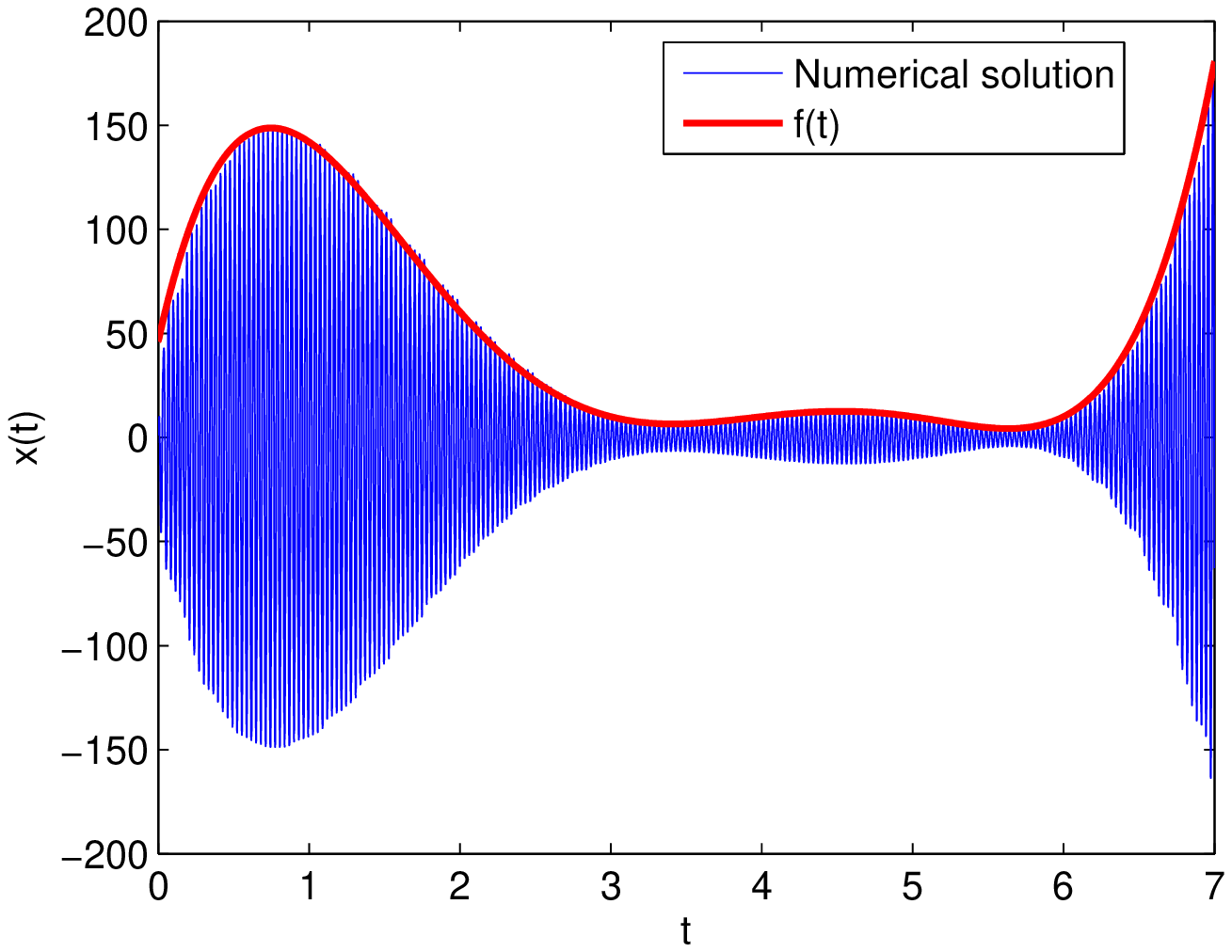}
\label{fig_5thPoly_a}
}
\subfigure[$\omega=1000$]{
\includegraphics[width=0.48\textwidth]{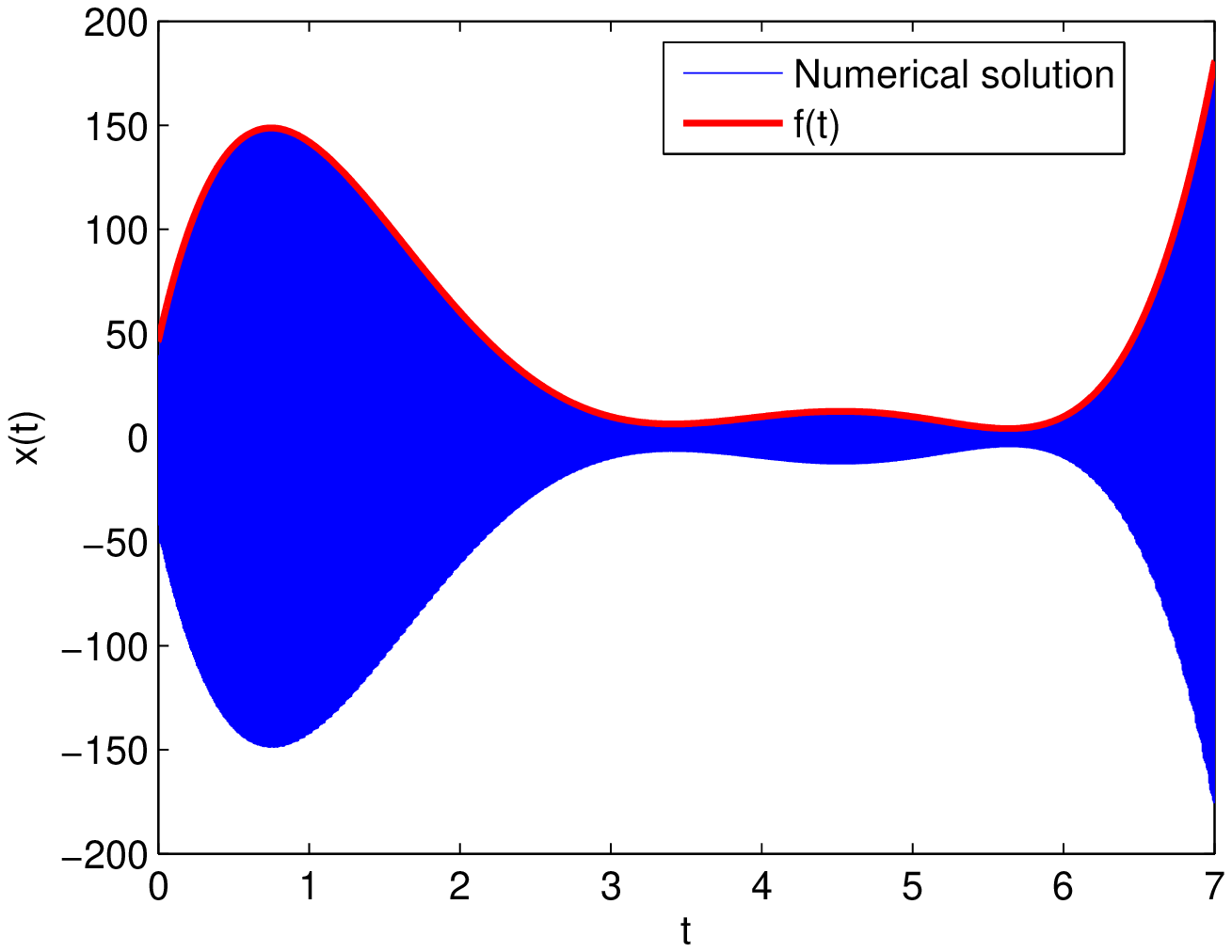}
}
\caption{\footnotesize $x(t)$, the solution of the canonical equation with $\epsilon$ and $\theta$ chosen by the algorithm proposed in Section \ref{Section_control}, compared with the graph of $f(t)$. $x(t)$ is obtained obtained numerically by Velocity-Verlet with timestep $0.1/\omega$.}
\label{fig_5thPoly}
\end{figure}

As can be seen from Figure \ref{fig_5thPoly}, control is achieved in the sense that the oscillation amplitude of $x(t)$ approximates $f(t)$ when $\omega$ is big enough. The initial condition is $x(0)=1$ and $\dot{x}(0)=0$. Even though $f(0)=46$ significantly differs from $x(0)$, the amplitude $\sqrt{x(t)^2+\dot{x}(t)^2/\omega^2}$ rapidly converges to $f(t)$ (at rate $\sim 1/\omega$, and therefore barely observable in Figure \ref{fig_5thPoly_a}). Naturally, larger $\omega$ (and hence smaller $\epsilon$) leads to more accurate match. Longer simulation times do not degrade the quality of the approximation; however they obscure important details of the results (because $f(t)$ is large and rapidly increasing when $t$ is large), hence we have truncated the plot at $T=7$.

\subsection{The initialization problem}
\label{Section_ignition}
One drawback of PR is if initially the oscillator contains no initial energy ($x(0)=\dot{x}(0)=0$ in \eqref{eq_PR_canonical}) then parametric excitation has no effect. A remedy is to also add a nonparametric perturbation ($f(t)\neq 0$). For instance, if
\[
    A=\begin{bmatrix} 0 & 1 \\ -\omega^2 & 0 \end{bmatrix}, \qquad
    P(t)=\begin{bmatrix} 0 & 0 \\ -\omega^2 \cos(2 \omega t) & 0 \end{bmatrix},   \qquad
    f(t)=\begin{bmatrix} 0 \\ \delta \end{bmatrix} \quad (\delta\neq 0),
\]
an $\exp(\epsilon t/4)$ growth in the solution can be demonstrated by Theorem \ref{thm_temporalHomogenization}. This growth is due to the interaction between the small periodic and the nonparametric perturbations, because if either $P(t)$ or $f(t)$ is zero then the solution will not grow.

\section{Application 2: Energy harvest via parametric super-resonance and coupled RLC circuits}\label{sec:app2}

Consider the effect of  time-periodic oscillations in inductance or capacitance on the dynamic of
RLC circuits. For example, suppose that the capacitance fluctuates according to $\bar{C}(1-\eta \cos(2\omega t))$, where $\eta \ll 1$. It is known that the dynamic of such circuits is characterized by parametric resonance if $\omega \approx  \omega_n$, where $\omega_n$ is the intrinsic frequency of the oscillator. It can also be shown that, if $\omega = \omega_n$ and $2 R \bar{C} < \eta/\omega$ then the energy injected into the circuit by parametric resonance overcomes the dissipation induced by $R$, and the energy stored in circuit grows exponentially (see \cite{nonliearoscillations} for early experiments).

This phenomenon could, in principle, be used for energy harvesting. For instance, the earth-ionosphere behaves like a dielectric cavity with specific resonant frequencies. This leads to small oscillatory fluctuations in the ambient electromagnetic field at these frequencies \cite{price2007schumann}. Since these oscillations can result (through nonlinear effects) in oscillations of  circuit parameters, one natural question is the possibility of extracting the energy of these oscillations by
tuning the intrinsic frequency of the circuit  to hit parametric resonance (such questions can be traced back to Tesla's investigations on energy harvesting \cite{tesla2007experiments}).

The main limitation on the implementation of single parametrically-resonant circuit for harvesting energy is that the amplitude $\epsilon$ of induced parametrical fluctuations is usually too small to compensate the dissipative effect of the resistance ($2R\bar{C} \omega_n< \eta$ is needed for the compensation). We will use the temporal homogenization framework developed here to show that a large number of such circuits can, under the right coupling, overcome the dissipation.

\paragraph{Coupled RLC circuits.}
\begin{figure}[htb]
\centering
\footnotesize
\subfigure[The circuit.]{
\includegraphics[width=0.48\textwidth]{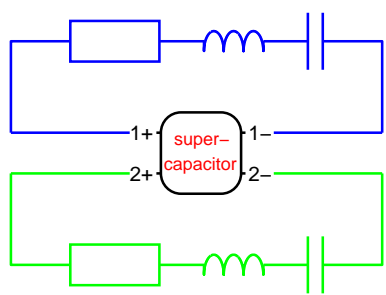}
\label{fig_circuit_a}
}
\subfigure[Schematic of the supercapacitor. Only conductive layers (electrodes) are shown; insulating dielectrics between adjacent layers are not drawn.]{
\includegraphics[width=0.48\textwidth]{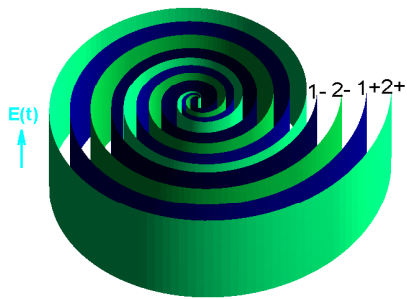}
\label{fig_circuit_b}
}
\caption{\footnotesize Coupled RLC circuits for energy harvest ($n=2$ for demonstration).}
\label{fig_circuit}
\end{figure}

Consider $n$ RLC circuits as illustrated in Figure \ref{fig_circuit_a},  coupled through the supercapacitor illustrated in Figure \ref{fig_circuit_b}. This supercapacitor is analogous to a wound film capacitor (e.g., \cite{bhattacharyya1982method}), where alternating conductive layers and dielectric layers are wound together, and it generates an electromotive force according to the sum of currents in all circuits, yet keeping these circuits insulated from each other.

Due to the electrostriction property of dielectrics (e.g., \cite{zhang1998giant}), the ambient electric field introduces a small periodic variation in the capacitance of this supercapacitor. This variation could be further enhanced, for instance, by attaching positive and negative charges respectively to two edges of electrodes via stiff nonconducting materials, which will stretch/compress the conducting plates according to the ambient electric field, and consequently change the capacitance (recall that parallel-plate conductor has a capacitance proportional to the plate area).

Denote by $I_i$ the current in the $i^{th}$ circuit. Assume the supercapacitor is symmetric with respect to permutations of electrodes (this is approximately true if sufficiently many turns are wound), such that the voltage difference $V_1$ across the public supercapacitor satisfies
\begin{equation}
    C(t) \frac{d V_1}{d t} = \sum_{i=1}^n I_i,
    \label{eq_publicCapacitor}
\end{equation}
where $C(t)=\bar{C}(1-\eta \cos(2\omega t))$ for some $\eta \ll 1$. Meanwhile, the voltage differences across the  capacitor, inductor, and resistor of sub-circuit $i$ respectively satisfy
\[
    C_i \frac{d V_{2,i}}{d t} = I_i, \qquad
    L_i \frac{d I_i}{d t} = V_{3,i}, \qquad
    R_i I_i = V_{4,i}.
\]
Kirchoff law of $V_1+V_{2,i}+V_{3,i}+V_{4,i}=0$ leads to the following dynamics:
\[
    \frac{1}{C(t)} \sum_{j=1}^n I_j + \frac{1}{C_i} I_i + R_i \frac{d I_i}{d t}+L_i \frac{d^2 I_i}{d t^2} = 0, \quad i=1,\ldots,n.
\]

%

\begin{Remark}
	The model described here is conceptual. For example, the choice of constant $\eta$ and $\omega$ is based on the assumption that the ambient electromagnetic fluctuations are sustained by an infinite energy reservoir. Also, when $n$ is large, it is an engineering challenge to pack all layers into a supercapacitor. 
\end{Remark}

\paragraph{Parametric super-resonance.}
For simplicity, consider identical circuits, i.e., $C_i=C$, $L_i=L$, $R_i=R$. Let $\epsilon=\eta/(L \bar{C})$, then $1/(L C(t))=1/(L \bar{C})+\epsilon \cos(2\omega t)+\mathcal{O}(\epsilon^2)$. Let $x=[I_1,\dot{I}_1,\cdots,I_n,\dot{I}_n]$, then
\[
    \dot{x}=Ax+\epsilon P(t)x+\mathcal{O}(\epsilon^2), \qquad \text{with}
\]
\[
    A=\begin{bmatrix} B & D & \cdots & D \\
      D & \ddots & \ddots & \vdots \\
      \vdots & \ddots & \ddots & D \\
      D & \cdots & D & B \end{bmatrix}
    \qquad\text{and}\qquad
    P=\begin{bmatrix} Q & Q & \cdots & Q \\
      Q & \ddots & \ddots & \vdots \\
      \vdots & \ddots & \ddots & Q \\
      Q & \cdots & Q & Q \end{bmatrix}, \qquad \text{where}
\]
\[
    B=\begin{bmatrix} 0 & 1 \\ -1/(LC)-1/(L\bar{C}) & -R/L \end{bmatrix}, \qquad
    D=\begin{bmatrix} 0 & 0 \\ -1/(L\bar{C}) & 0 \end{bmatrix}, \qquad
    Q=\begin{bmatrix} 0 & 0 \\ \cos(2\omega t) & 0 \end{bmatrix}.
\]
We will show that, provided $\omega=\sqrt{1/(LC)+n/(L\bar{C})-R^2/(4 L^2)}$, the solution grows exponentially if
\begin{equation}
\epsilon \frac{n}{\omega} > 2\frac{R}{L}, \quad\text{i.e.,}\quad \eta \frac{n}{\bar{C} \omega} > 2R,
\end{equation}
which is satisfied when $n$ (the number of coupled circuits) is large enough.

\begin{Lemma}
    \[
        \text{Let} \quad U=\begin{bmatrix} I & I & \cdots & I \\ I & -(n-1)I & I & \cdots & I \\ I & I & -(n-1)I & \ddots & \vdots \\ \vdots & \vdots & \ddots & \ddots & I \\ I & I & \cdots & I & -(n-1)I \end{bmatrix}, \text{ then }
        U^{-1}=\frac{1}{n} \begin{bmatrix} I & I & I & \cdots & I \\ I & -I & 0 & \cdots & 0 \\ I & 0 & -I & \ddots & \vdots \\ \vdots & \vdots & \ddots & \ddots & 0 \\ I & 0 & \cdots & 0 & -I \end{bmatrix},
    \]
    \[
        U^{-1}PU=\begin{bmatrix} nQ & & & & \\ & 0 & & & \\ & & 0 & & \\ & & & \ddots & \\ & & & & 0 \end{bmatrix}, \text{ and }
        U^{-1}AU=\begin{bmatrix} B+(n-1)D & & & & \\ & B-D & & & \\ & & B-D & & \\ & & & \ddots & \\ & & & & & B-D \end{bmatrix}.
    \]
    \label{lemma_coupledCircuitsLinearAlgebra}
\end{Lemma}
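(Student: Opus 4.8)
The plan is to reduce all three identities to a single $n\times n$ (block‑)scalar computation, exploiting the fact that every block of $U$ and of the asserted $U^{-1}$ is a scalar multiple of the $2\times2$ identity and therefore commutes with $B$, $D$ and $Q$. Write $U = W\otimes I_2$, where $W$ is the $n\times n$ matrix whose first row and first column consist of $1$'s and whose lower‑right $(n-1)\times(n-1)$ block has $-(n-1)$ on the diagonal and $1$ off it; the asserted inverse is then $\tfrac1n\,V\otimes I_2$, where $V$ has first row and first column equal to $1$'s and lower‑right $(n-1)\times(n-1)$ block equal to $-I_{n-1}$. Also note $A = I_n\otimes(B-D) + J_n\otimes D$ and $P = J_n\otimes Q$, with $J_n$ the all‑ones $n\times n$ matrix. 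Under this notation the Lemma amounts to: (i) $WV = nI_n$; (ii) $W^{-1}J_nW = n\,e_1 e_1^{\top}$; and then (iii), which follows from the mixed‑product rule for $\otimes$.

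\emph{Step 1 (the inverse).} Verify $WV = nI_n$ by a four‑case check on the block index $(i,j)$. The $(1,1)$ block is $\sum_k 1\cdot1 = n$. For $j\ge 2$ the $(1,j)$ block is $\sum_k V_{kj} = 0$, since column $j$ of $V$ has exactly one $+1$ and one $-1$. For $i\ge 2$ the $(i,1)$ block is $\sum_k W_{ik} = 1 - (n-1) + (n-2) = 0$. For $i,j\ge 2$ only $k=1$ and $k=j$ contribute, giving $W_{i1} - W_{ij} = 1 - W_{ij}$, which is $n$ when $i=j$ and $0$ otherwise. Hence $WV = nI_n$, i.e. $U^{-1} = \tfrac1n\,V\otimes I_2$, the displayed matrix.

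\emph{Steps 2--3 (conjugation and assembly).} Summing the entries of each column of $W$ gives $\mathbf{1}^{\top}W = n\,e_1^{\top}$ (first column sums to $n$, every other to $1-(n-1)+(n-2)=0$); summing the rows of $V$ gives $V\mathbf{1} = n\,e_1$, so $W^{-1}\mathbf{1} = \tfrac1n V\mathbf{1} = e_1$. Since $J_n = \mathbf{1}\mathbf{1}^{\top}$, we obtain $W^{-1}J_nW = (W^{-1}\mathbf{1})(\mathbf{1}^{\top}W) = e_1\cdot n\,e_1^{\top} = \mathrm{diag}(n,0,\dots,0)$. Therefore $U^{-1}PU = (W^{-1}J_nW)\otimes Q = \mathrm{diag}(nQ,0,\dots,0)$, and
\[
    U^{-1}AU = (W^{-1}I_nW)\otimes(B-D) + (W^{-1}J_nW)\otimes D = I_n\otimes(B-D) + \mathrm{diag}(nD,0,\dots,0),
\]
which is block‑diagonal with first block $B-D+nD = B+(n-1)D$ and all others $B-D$, exactly as claimed.

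There is no real obstacle here; the computation is routine. The only care needed is in the bookkeeping of the four‑case verification of $WV = nI_n$ and in spotting the short route through $\mathbf{1}^{\top}W = n\,e_1^{\top}$ and $W^{-1}\mathbf{1} = e_1$ rather than conjugating $J_n$ by brute force. One can equally avoid Kronecker‑product notation and perform the same steps as direct block‑matrix multiplications; the $\otimes$ formulation merely makes the structure transparent and the algebra short.
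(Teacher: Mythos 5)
Your proposal is correct and matches the paper's approach: the paper's proof simply states that, once $U$ is written down, everything "can be checked by simple algebra," and your argument is exactly that verification carried out in full, with the Kronecker-product bookkeeping ($U=W\otimes I_2$, $A=I_n\otimes(B-D)+J_n\otimes D$, $P=J_n\otimes Q$) serving only as a convenient way to organize the block computations.
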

\begin{proof} Once the form of $U$ is obtained, the rest can be checked by simple algebra. \end{proof}

\begin{Lemma}
Let $\gamma=R/L$. If $\omega=\sqrt{1/(LC)+n/(L\bar{C})-\gamma^2/4}$, then
\begin{equation}
    \mathcal{G}\left[e^{-At}P(t)e^{At}\right]=n U \begin{bmatrix} \Delta & & & \\ & 0 & & \\ & & \ddots & \\ & & & 0 \end{bmatrix}U^{-1}, \text{ where }
    \Delta=\begin{bmatrix} \frac{\gamma}{8\omega^2} & \frac{1}{4\omega^2} \\ -\frac{\gamma^2-4\omega^2}{16\omega^2} & -\frac{\gamma}{8\omega^2} \end{bmatrix}.
    \label{eq_coupledCircuit_Delta}
\end{equation}
\label{lemma_coupledCircuitsB}
\end{Lemma}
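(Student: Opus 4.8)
\textbf{Proof proposal for Lemma \ref{lemma_coupledCircuitsB}.}
The plan is to use Lemma \ref{lemma_coupledCircuitsLinearAlgebra} to decouple the $n$ circuits and reduce the claim to a single $2\times 2$ trigonometric computation. Writing $\tilde A := U^{-1}AU = \mathrm{diag}(B+(n-1)D,\,B-D,\dots,B-D)$ and $\tilde P := U^{-1}PU = \mathrm{diag}(nQ,0,\dots,0)$, one has
\[
    e^{-At}P(t)e^{At} = U\,\big(e^{-\tilde A t}\tilde P(t)\,e^{\tilde A t}\big)\,U^{-1}.
\]
Since $\tilde A$ and $\tilde P$ are block diagonal with compatible blocks and only the first block of $\tilde P$ is nonzero, $e^{-\tilde A t}\tilde P(t)e^{\tilde A t}$ is block diagonal with unique nonzero block $n\,e^{-\hat A t}Q(t)e^{\hat A t}$, where $\hat A:=B+(n-1)D$. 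By the entrywise identity $\mathcal{G}[M]_{ij}=\mathcal{G}[M_{ij}]$ in Proposition \ref{thm_algebraic}, $\mathcal{G}$ is linear and commutes both with left/right multiplication by the constant matrices $U,U^{-1}$ and with restriction to a fixed diagonal block (this is because $\mathcal{G}$ merely keeps the growth-mode coefficient matrices in the unique representation of Lemma \ref{lem_fourierExpansion}, and those operations are linear and preserve that representation). Hence it suffices to prove $\mathcal{G}[e^{-\hat A t}Q(t)e^{\hat A t}]=\Delta$, after which $\mathcal{G}[e^{-At}P(t)e^{At}]=nU\,\mathrm{diag}(\Delta,0,\dots,0)\,U^{-1}$ follows immediately.

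Next I would compute $\hat A$ explicitly. From the definitions of $B$ and $D$, $\hat A = \begin{bmatrix} 0 & 1 \\ -\omega_0^2 & -\gamma\end{bmatrix}$ with $\omega_0^2:=1/(LC)+n/(L\bar C)$ and $\gamma=R/L$. The tuning hypothesis $\omega=\sqrt{1/(LC)+n/(L\bar C)-\gamma^2/4}$ says precisely that $\omega$ is the damped natural frequency $\sqrt{\omega_0^2-\gamma^2/4}$ of $\hat A$, so $\hat A$ has eigenvalues $-\gamma/2\pm\sqrt{-1}\,\omega$ and
\[
    e^{\hat A t}=e^{-\gamma t/2}\begin{bmatrix} \cos\omega t+\frac{\gamma}{2\omega}\sin\omega t & \frac{1}{\omega}\sin\omega t \\ -\frac{\omega_0^2}{\omega}\sin\omega t & \cos\omega t-\frac{\gamma}{2\omega}\sin\omega t\end{bmatrix},
\]
with $e^{-\hat A t}$ obtained by $t\mapsto -t$. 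Forming $e^{-\hat A t}Q(t)e^{\hat A t}$, the prefactors $e^{+\gamma t/2}$ and $e^{-\gamma t/2}$ cancel — consistent with the Remark after Proposition \ref{thm_boundedness}, since all eigenvalues of $\hat A$ share the real part $-\gamma/2$ — and, because $Q$ has the single nonzero entry $\cos(2\omega t)$ in position $(2,1)$, every entry of the product is $\cos(2\omega t)$ times a quadratic form in $\cos\omega t,\sin\omega t$.

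Finally I would expand these products using the identities $\sin^2\omega t=\tfrac12(1-\cos 2\omega t)$, $\cos^2\omega t=\tfrac12(1+\cos 2\omega t)$, $\sin\omega t\cos\omega t=\tfrac12\sin 2\omega t$, and then $\cos 2\omega t\cdot\cos 2\omega t=\tfrac12(1+\cos 4\omega t)$, $\cos 2\omega t\cdot\sin 2\omega t=\tfrac12\sin 4\omega t$. Each entry then becomes a constant matrix plus pure oscillations at frequencies $2\omega$ and $4\omega$; by Definition \ref{def_nonOsc1}, $\mathcal{G}$ retains exactly the constant part, and a direct check of the four entries shows that part equals $\Delta$ as displayed in \eqref{eq_coupledCircuit_Delta} (alternatively one may invoke Proposition \ref{thm_algebraic} with the only surviving Fourier index $l=1$, arising from $|\mu+\mu|=2\omega$). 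Substituting back gives the claimed formula for $\mathcal{G}[e^{-At}P(t)e^{At}]$. The only non-mechanical point in this argument is the commutation of $\mathcal{G}$ with conjugation by the constant matrix $U$ and with block restriction; the remainder is the $2\times 2$ trigonometric bookkeeping above, whose main pitfall is simply tracking which $\cos(2\omega t)\cdot(\cdot)$ products leave a nonzero constant term.
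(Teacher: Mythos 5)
Your proposal is correct and follows essentially the same route as the paper's proof: conjugate by $U$, use Lemma \ref{lemma_coupledCircuitsLinearAlgebra} to reduce everything to the single block $\hat A=B+(n-1)D$ with eigenvalues $-\gamma/2\pm\sqrt{-1}\,\omega$, and extract the constant (growth) part of $e^{-\hat A t}Q(t)e^{\hat A t}$ by trigonometric averaging, which indeed yields $\Delta$. If anything, you are more careful than the paper on two points it leaves implicit — the commutation of $\mathcal{G}$ with conjugation by the constant matrix $U$, and the bookkeeping of the factor $n$ — so no gap.
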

\begin{proof}
    \[
        U^{-1} \mathcal{G} [\exp(-At)P(t)\exp(At)]U=\mathcal{G}\left[ \exp\left(-(U^{-1}AU)t\right) U^{-1}PU \exp\left((U^{-1}AU)t\right) \right].
    \]
    Using results in Lemma \ref{lemma_coupledCircuitsLinearAlgebra}, the above matrix has all 0 block-elements except for the first diagonal element, which is
    \[
        \Delta=\mathcal{G}[\exp(-(B+(n-1)D)t)nQ(t)\exp((B+(n-1)D)t)].
    \]
    Note $B+(n-1)D=\begin{bmatrix} 0 & 1 \\ -1/(LC)-n/(L\bar{C}) & -\gamma \end{bmatrix}$, whose eigenvalues are $-\frac{\gamma}{2} \pm \imath \omega$. Standard calculations lead to \eqref{eq_coupledCircuit_Delta}.
\end{proof}

\begin{Corollary}
    Given $x(0) \in \mathbb{R}^{2n} \setminus E$ for some $2n-1$ dimensional linear subspace $E\subset \mathbb{R}^{2n}$, $\|x(t)\|$ is unbounded if and only if $\epsilon \frac{n}{\omega}>2\gamma$.
\end{Corollary}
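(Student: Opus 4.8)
\noindent\emph{Proof plan.} The strategy is to combine the exact block‑diagonalization of Lemma~\ref{lemma_coupledCircuitsLinearAlgebra} with the temporal homogenization of Theorem~\ref{thm_temporalHomogenization}, reducing the problem to the (un)boundedness of a single two‑dimensional, parametrically forced damped oscillator. First change variables by $x=U\tilde{x}$ with $U$ as in Lemma~\ref{lemma_coupledCircuitsLinearAlgebra}; since $U$ is a fixed invertible matrix, $\|x(t)\|$ is unbounded iff $\|\tilde{x}(t)\|$ is. By Lemma~\ref{lemma_coupledCircuitsLinearAlgebra} the transformed drift $U^{-1}AU$ is block diagonal with one block $M:=B+(n-1)D$ and $n-1$ copies of $B-D$, while $U^{-1}PU$ is block diagonal with only the first block nonzero (equal to $nQ(t)$, once the $\mathcal{O}(\epsilon^2)$ correction to $1/(LC(t))$ is absorbed into a periodic higher‑order term). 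Hence $\tilde{x}$ splits into a ``resonant'' component $y\in\R^2$ with $\dot{y}=(M+\epsilon nQ(t)+\mathcal{O}(\epsilon^2))y$ and $n-1$ ``passive'' components $z_j\in\R^2$ with $\dot{z}_j=(B-D+\mathcal{O}(\epsilon^2))z_j$.

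The passive blocks are harmless: the eigenvalues of $B-D$ solve $s^2+\gamma s+1/(LC)=0$ and so have real part $-\gamma/2<0$, hence $B-D$ is Hurwitz and an $\mathcal{O}(\epsilon^2)$ time‑dependent perturbation of it is still exponentially stable (Gronwall), so every $z_j(t)\to0$ regardless of the size of $\epsilon$; thus $\|x(t)\|$ is bounded iff $\|y(t)\|$ is. To analyze $y$, apply Theorem~\ref{thm_temporalHomogenization} to $\dot{y}=(M+\epsilon\,nQ(t))y$ (absorbing $\mathcal{O}(\epsilon^2)$ into the error, $f\equiv0$): $M$ has eigenvalues $-\gamma/2\pm\mathrm{i}\omega$, hence is diagonalizable with all eigenvalues of the same real part $-\gamma/2$, so by the Remark after Proposition~\ref{thm_boundedness} the matrix $e^{-Mt}(nQ(t))e^{Mt}$ is uniformly bounded and the theorem applies, giving for $t\le C\epsilon^{-1}$ that $y(t)=e^{Mt}\big(\exp(\epsilon n\Delta\,t)y(0)+E(t,\epsilon)\big)$ with $\|E(t,\epsilon)\|\le C\epsilon\max_{\tau\le t}\|\exp(\epsilon n\Delta\,\tau)y(0)\|$, where $n\Delta=\mathcal{G}[e^{-Mt}nQ(t)e^{Mt}]$ is precisely the matrix of Lemma~\ref{lemma_coupledCircuitsB}. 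Since $\operatorname{tr}\Delta=0$ and $\det\Delta=-1/(16\omega^2)$, $\epsilon n\Delta$ has the two simple real eigenvalues $\pm\epsilon n/(4\omega)$ with $\epsilon$‑independent eigenvectors $w_\pm$; and because $M$ is diagonalizable with spectrum on the line $\operatorname{Re}=-\gamma/2$, one has the two‑sided bound $\|e^{Mt}v\|\asymp e^{-\gamma t/2}\|v\|$ uniformly in $t$.

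Now set $E:=U\big(\operatorname{span}(w_-)\oplus\R^{2(n-1)}\big)$, a $2n-1$ dimensional subspace (the preimage of ``the resonant initial datum lies in the decaying eigenline of $\Delta$''). For $x(0)\notin E$ the $w_+$‑component of $y(0)$ is nonzero, so $\|\exp(\epsilon n\Delta\,t)y(0)\|\asymp e^{\epsilon n t/(4\omega)}$ for $t$ large; combining with the homogenized formula and the two‑sided bound on $e^{Mt}$ gives $\|y(t)\|\asymp e^{(\epsilon n/(4\omega)-\gamma/2)t}$ on $t\le C\epsilon^{-1}$, a net exponential rate positive exactly when $\epsilon n/\omega>2\gamma$ (whereas if $\epsilon n/\omega\le2\gamma$ even the $w_+$ direction is non‑expanding, so $\|y(t)\|$, hence $\|x(t)\|$, stays bounded for every initial datum). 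To turn this $\mathcal{O}(\epsilon^{-1})$‑time statement into a genuine $t\to\infty$ one, note $\dot{y}=(M+\epsilon nQ(t)+\mathcal{O}(\epsilon^2))y$ has $T$‑periodic coefficients with $T=\pi/\omega$; Floquet's theorem writes its fundamental matrix as $\mathcal{Q}_y(t)e^{R_yt}$ with $\mathcal{Q}_y$ periodic, continuous and invertible, so $\|y(t)\|$ is unbounded iff $R_y$ has an eigenvalue of positive real part (the borderline case excluded next). Evaluating the homogenized approximation along $w_\pm$ at $t\asymp\epsilon^{-1}$ brackets the two Floquet exponents to be $-\gamma/2\pm\epsilon n/(4\omega)+o(\epsilon)$; hence the leading one is positive iff $\epsilon n/\omega>2\gamma$, while at equality the two exponents are distinct, so $R_y$ carries no Jordan block and $\|y(t)\|$ stays bounded. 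Iterating the period map propagates the conclusion to all $t$, yielding the claimed equivalence.

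The delicate step is the last one: Theorem~\ref{thm_temporalHomogenization} controls the solution only on $0\le t\le C\epsilon^{-1}$, whereas (un)boundedness is an infinite‑time property, so one must upgrade the finite‑time estimate into a statement about the spectral radius of the monodromy matrix of the $y$‑equation with $o(\epsilon)$ accuracy and then treat the threshold $\epsilon n/\omega=2\gamma$ separately. Periodicity is what makes the upgrade work — each full period multiplies by the same monodromy matrix, whose leading eigenvalue modulus is squeezed between $(1\pm\mathcal{O}(\epsilon))e^{(\epsilon n/(4\omega)-\gamma/2)T}$ by the homogenized growth factor — after which iteration over periods reaches $t=\infty$. (All of this is for $\epsilon$ small; the engineering regime of interest, in which $n$ is large enough that $\epsilon n/\omega>2\gamma$, also has $\omega=\sqrt{1/(LC)+n/(L\bar{C})-\gamma^2/4}$ large, so the $o(\epsilon)$ and $\mathcal{O}(\epsilon^2)$ discrepancies are likewise controlled.)
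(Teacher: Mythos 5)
Your argument is, at its core, the same as the paper's: conjugate by the matrix $U$ of Lemma~\ref{lemma_coupledCircuitsLinearAlgebra}, observe that the $n-1$ passive blocks $B-D$ are damped, apply Theorem~\ref{thm_temporalHomogenization} to the single resonant block with effective matrix $\epsilon n\Delta$ from Lemma~\ref{lemma_coupledCircuitsB}, compare the rates $-\gamma/2\pm\epsilon n/(4\omega)$ against $-\gamma/2$, and take $E$ to be the $U$-image of the decaying eigendirection $w_-$ padded by the passive coordinates --- exactly the computation in the paper's one-line proof. Where you genuinely add something is the final step: the paper tacitly equates ``dominated by exponential growth on $t\le C\epsilon^{-1}$'' with ``unbounded,'' while you upgrade the finite-time homogenization estimate to an infinite-time statement by passing to the monodromy matrix of the periodic resonant block and sandwiching its spectral radius with the homogenized growth factor (the exact identity $\det\Phi(T)=e^{-\gamma T}$ from Abel's formula makes this clean for a $2\times2$ block). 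Two caveats on that extra step: the norm-of-powers argument over $N\sim\epsilon^{-1}$ periods pins the leading Floquet exponent only to within an $O(\epsilon)$ (not $o(\epsilon)$) additive error, which settles strict inequalities for small $\epsilon$ but cannot decide the exact threshold $\epsilon n/\omega=2\gamma$, so your claim of boundedness at equality is not justified (indeed the true stability boundary is shifted by higher-order corrections); and your explicit $E$, built from the approximate eigenvector $w_-$, differs by $O(\epsilon)$ from the true stable subspace of the monodromy map, which is what the ``if'' direction strictly requires. Both imprecisions are equally present in the paper's own proof, so they are not gaps relative to it; net, your write-up is a more careful version of the same argument.
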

\begin{proof}
    Substitution of Lemma \ref{lemma_coupledCircuitsB} in Theorem \ref{thm_temporalHomogenization} leads to (as seen in the proof of Theorem \ref{thm_temporalHomogenization}, ignoring the $\mathcal{O}(\epsilon^2)$ term in the equation does not affect the leading term in the solution):
    \[
        x(t) \approx U \text{diag}\left[ e^{(B+(n-1)D)t}, e^{(B-D)t}, \cdots, e^{(B-D)t} \right] U^{-1} U \text{diag}\left[ e^{\epsilon n t \Delta}, I, \cdots, I \right] U^{-1} x(0).
    \]
    Since eigenvalues of $\Delta$ are $\pm \frac{1}{4\omega}$, real parts of eigenvalues of the above approximate solution operator are $\exp((-\frac{\gamma}{2} \pm \epsilon \frac{n}{4\omega})t)$ and $\exp(-\frac{\gamma}{2}t)$. The solution will be dominated by exponential growth if and only if $-\frac{\gamma}{2}+\epsilon \frac{n}{4\omega}>0$, unless $U^{-1}x(0)$ projects to zero in the direction of the $\Delta$ eigenvector associated with its $+\frac{1}{4\omega}$ eigenvalue.
\end{proof}
\begin{Remark}
	As initial conditions that do not lead to unbounded growth are of measure zero, in practice it is unlikely that they will hamper energy harvest. To entirely avoid this possibility, one can add to the system an `ignition', which is a short period forcing term (see section \ref{Section_ignition}).
\end{Remark}

\paragraph{On the constitutive variable capacitor equation.}
Using the equation $	\frac{d}{dt}(C(t)V_1(t))=\sum_{i=1}^n I_i $ instead of \eqref{eq_publicCapacitor} to model the shared supercapacitor leads to similar results, i.e. an exponential growth of the solution is achieved when $\omega=\sqrt{1/(LC_i)+n/(L\bar{C})-R^2/(4 L^2)}$ and $n$ is large enough. To sketch this calculation, note the system can be shown to be governed by
\[ \begin{cases}
	\dot{V}_1 &= (\sum I_i-\dot{C}(t)V_1)/C(t) \\
	\dot{V}_{2i} &= I_i / C_i \\
	\dot{I}_i &= -(V_1+V_{2i}+RI_i)/L
\end{cases}, \]
which can be written in canonical form (up to $\mathcal{O}(\eta)$) by letting $x=[V_1,V_{21},I_1,\cdots,V_{2n},I_n]$,
\[
    A=\begin{bmatrix} 0 & 0 & \frac{1}{\bar{C}} & 0 & \frac{1}{\bar{C}} & \cdots \\
    	0 & 0 & \frac{1}{C_i} & 0 & 0 & \cdots \\
    	-\frac{1}{L} & -\frac{1}{L} & -\frac{R}{L} & 0 & 0 & \cdots \\
    	0 & 0 & 0 & 0 & \frac{1}{C_i} & \\
    	-\frac{1}{L} & 0 & 0 & -\frac{1}{L} & -\frac{R}{L} & \\
    	\vdots & \vdots & \vdots & & & \ddots
    \end{bmatrix},\text{ and }
    P(t)=\begin{bmatrix} -2\omega\sin(2\omega t) & 0 & \frac{\cos(2\omega t)}{\bar{C}} & 0 & \frac{\cos(2\omega t)}{\bar{C}} & \cdots \\
    	0 & 0 & 0 & 0 & 0 & \cdots \\
    	0 & 0 & 0 & 0 & 0 & \cdots \\
    	0 & 0 & 0 & 0 & 0 & \cdots \\
    	0 & 0 & 0 & 0 & 0 & \cdots \\
    	& & \vdots & & & \ddots
    \end{bmatrix}.
\]
The following $U$ and $U^{-1}$ lead to block-diagonal $U^{-1}A U$ (with block sizes of $1,2,2,\cdots$):
\[
	U=\begin{bmatrix} \alpha & \beta & \mathbf{0} & \mathbf{0} & \cdots \\
		\gamma & I & -I & -I & \cdots \\
		\gamma & I &    &    & \iddots  \\
		\gamma & I &    & I  &    \\
		\vdots & \vdots & \iddots  &    &
	\end{bmatrix}, \quad \text{and} \quad
	U^{-1}\begin{bmatrix} x & y & y & y & \cdots \\
		\Delta & z & z & z & \cdots \\
		\mathbf{0} & -I/n & -I/n & -I/n & \iddots  \\
		\mathbf{0} & -I/n & -I/n & I-I/n & -I/n \\
		\vdots & \vdots & \iddots & -I/n & -I/n
	\end{bmatrix},
\]
\begin{align*}
	\text{where}\quad &\alpha=-\frac{E_{21}}{d_2}\lambda, \quad
	\beta=\begin{bmatrix} \frac{b_2}{E_{12}}n & 0 \end{bmatrix}, \quad
	\gamma=\begin{bmatrix} \lambda \\ 0 \end{bmatrix},
	\\
	&x=-\frac{d_2 E_{12}}{\zeta \lambda}, \quad
	y=\begin{bmatrix} \frac{b_2 d_2} {\zeta \lambda} & 0 \end{bmatrix}, \quad
	\Delta=\begin{bmatrix} \frac{d_2 E_{12}}{\zeta} \\ 0 \end{bmatrix}, \quad
	z=\begin{bmatrix} \frac{E_{12} E_{21}}{\zeta n} & 0 \\ 0 & \frac{1}{n} \end{bmatrix},
	\\
	&\text{with } \begin{bmatrix} b_1 & b_2 \end{bmatrix}=\begin{bmatrix} A_{12} & A_{13} \end{bmatrix}, \quad
	\begin{bmatrix} d_1 \\ d_2 \end{bmatrix}=\begin{bmatrix} A_{21} \\ A_{31} \end{bmatrix}, \quad
	\mathbf{E}:=\begin{bmatrix} E_{11} & E_{12} \\ E_{21} & E_{22} \end{bmatrix} = \begin{bmatrix} A_{22} & A_{23} \\ A_{32} & A_{33} \end{bmatrix},
	\\
	&\zeta=n b_2 d_2+E_{12}E_{21}, \text{ and } \lambda \text{ being an arbitrary nonzero scalar}.
\end{align*}
Once $U$ and $U^{-1}$ are explicitly identified, it can be computed that
\[
	U^{-1}AU = \begin{bmatrix} 0 & 0 & 0 & & & \\
		0 & 0 & \frac{1}{C_i} & & & \\
		0 & -\frac{\bar{C}+nC_i}{\bar{C}L} & -\frac{R}{L} & & & \\
		& & & \mathbf{E} & & \\
		& & & & \mathbf{E} & \\
		& & & & & \ddots
	\end{bmatrix},
\]
whose eigenvalue of $\omega$ resonates with the parametric perturbation, and that
\[
	U^{-1}P(t)U = \frac{1}{\bar{C}+n C_i} \begin{bmatrix}
		-2\omega\bar{C} \sin{2 \omega t} & \frac{1}{\lambda}2\omega C_i n \sin(2 \omega t) & -\frac{1}{\lambda} n \cos(2 \omega t) & 0 & 0 & \cdots \\
		2\omega \lambda \bar{C} \sin(2 \omega t) & -2\omega C_i n \sin(2 \omega t) & n \cos(2 \omega t) & 0 & 0 & \cdots \\
		0 & 0 & 0 & 0 & 0 & \cdots \\
		0 & 0 & 0 & 0 &   & \cdots \\
		0 & 0 & 0 &   & 0 & \cdots \\
		\vdots & \vdots & \vdots & & & \ddots
	\end{bmatrix}.
\]
Then parametric super-resonance can again be demonstrated by temporal homogenization.

\paragraph{A preliminary analysis of practical feasibility.}
The first mode of ambient electromagnetic fluctuations has its peak around $\sim$8Hz, with an electric field amplitude at the order of $10^{-3}V/m$ (c.f., static fair-weather electric field is about $150V/m$) \cite{price2007schumann}. This means $\omega$ is fixed and $\mathcal{O}(1)$, and it is reasonable to assume $\eta$ to be at the order of $10^{-5}$ or $10^{-6}$. We look for circuit parameters that satisfy
\begin{equation}
    \omega=\sqrt{\frac{1}{LC}+n\frac{1}{L\bar{C}}-\frac{R^2}{4 L^2}} \quad\text{and}\quad
    \eta \frac{n}{\bar{C} \omega} > 2R.
    \label{eq_harvest_constraints}
\end{equation}
Contemporary technologies can provide compact (super)capacitors \cite{Conway99} and inductors with values ranging from $10^{-12}$F to $10^4$F and $10^{-6}$H to $1$H. Writing $L=\eta^{l}[H], C=\eta^{a}[F],\bar{C}=\eta^{b}[F],n=\eta^{N}$, it is also reasonable to assume $R=\mathcal{O}(1)$ and constraints $-\sigma \leq l,a,b \leq \sigma$ for some positive parameter $\sigma$. Since $\eta \ll 1$, \eqref{eq_harvest_constraints} can be satisfied if leading order terms (in $1/\eta$) match, i.e.,
\[
    \min(-l-a,N-l-b)=2r-2l \quad \text{and} \quad 1+N-b<r.
\]
This linear programming problem is feasible when $\sigma\geq 1$. We choose to minimize $1+N-b-r$ in order to maximize the circuit gain, and then one solution is $l=\sigma$, $a=\sigma$, $b=\sigma/2$, $N=-\sigma/2$. When $\sigma=1$, this corresponds to parameters:
\[
    L=\mathcal{O}(\eta), \quad C=\mathcal{O}(\eta), \quad \bar{C}=\mathcal{O}(\sqrt{\eta}), \quad \text{ and } n=\mathcal{O}(1/\sqrt{\eta}).
\]
When $\eta\sim 10^{-6}$, this design requires the coupling of $\sim 10^{3}$ circuits to achieve energy gain. 

Although this preliminary analysis gives some indications on the workability of energy harvesting via super-resonance, it is by far incomplete, and a comprehensive feasibility analysis would require addressing possibly difficult engineering challenges such as (1) identifying workable physical configurations for packing a large number of layers into a supercapacitor and a large number of circuits around that supercapacitor  (2) keeping the financial cost of the system limited.
 These investigations are beyond the scope of this article.

\begin{wrapfigure}{r}{0.5\textwidth}
\vspace{-30pt}
\hspace{-30pt}
\footnotesize
\includegraphics[width=0.6\textwidth]{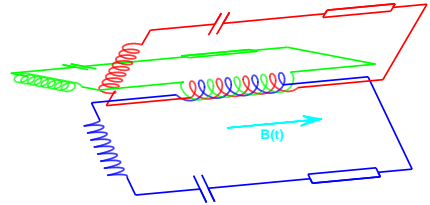}
\vspace{-40pt}
\caption{\footnotesize Alternative coupled RLC circuits for energy harvest. $n=3$ for demonstration; the shared ferromagnetic core of the inductors is not drawn.}
\vspace{-30pt}
\label{fig_circuit_alternative}
\end{wrapfigure}
\paragraph{Alternative design.}
We also note that similar scaling effects can also be achieved by coupling inductors. See Figure \ref{fig_circuit_alternative} for an illustrative design. Inductance can be coupled to ambient magnetic fluctuations if, for instance, the inductors have a ferromagnetic core.

\paragraph{Acknowledgments}
This work was supported by NSF grant CMMI-092600, a generous gift from UTRC, and Courant Instructorship from New York University. We thank Wei Mao for knowledge in engineering aspects of Amplitude Modulation, G\'{e}rard Ben Arous, Emmanuel Frenod, Jonathan Goodman, Robert Kohn for stimulating mathematical discussions, and anonymous referees for helpful comments.

\footnotesize
\bibliographystyle{siam}
\bibliography{molei21}

\end{document}